\newcommand{\R}{\mathbb{R}}
\newcommand{\E}{\mathbb{E}}
\newcommand{\bbP}{\mathbb{P}}
\newcommand{\bU}{\bm{U}}
\newcommand{\bY}{\bm{Y}}
\newcommand{\bZ}{\bm{Z}}
\newcommand{\bJ}{\bm{J}}
\newcommand{\bK}{\bm{K}}
\newcommand{\bgamma}{\bm{\gamma}}
\newcommand{\bdelta}{\bm{\delta}}
\newcommand{\Dcal}{\mathcal{D}}
\newcommand{\Ecal}{\mathcal{E}}
\newcommand{\Pcal}{\mathcal{P}}
\newcommand{\Acal}{\mathcal{A}}
\newcommand{\Tcal}{\mathcal{T}}
\newcommand{\Ycal}{\mathcal{Y}}
\newcommand{\Zcal}{\mathcal{Z}}
\newcommand{\Vcal}{\mathcal{V}}
\newcommand{\Wcal}{\mathcal{W}}
\newcommand{\Mcal}{\mathcal{M}}
\newcommand{\Xcal}{\mathcal{X}}
\newcommand{\Ncal}{\mathcal{N}}
\newcommand{\Hcal}{\mathcal{H}}
\newcommand{\Lcal}{\mathcal{L}}
\newcommand{\hWcal}{\widehat{\mathcal{W}}}
\newcommand{\hVcal}{\widehat{\mathcal{V}}}
\newcommand{\hXcal}{\widehat{\mathcal{X}}}
\newcommand{\hbY}{\widehat{\bm{Y}}}
\newcommand{\norm}[1]{\| #1 \|}
\newcommand{\trnorm}[1]{{\left\vert\kern-0.25ex\left\vert\kern-0.25ex\left\vert #1 
    \right\vert\kern-0.25ex\right\vert\kern-0.25ex\right\vert}}
\newcommand{\rd}{\mathrm{d}}
\newcommand{\St}{\mathrm{St}}
\newcommand{\sym}{\mathrm{sym}}
\newcommand{\mvec}{\mathrm{vec}}
\newcommand{\signal}{\mathrm{signal}}
\newcommand{\Soh}{\Sigma^{\nicefrac{1}{2}}}
\newcommand{\Goh}{\Gamma^{\nicefrac{1}{2}}}
\newcommand{\Gmoh}{\Gamma^{-\nicefrac{1}{2}}}
\newcommand{\D}{\Delta}
\newcommand{\hP}{\widehat{P}}
\newcommand{\Mat}[2][]{\mathrm{Mat}_{#1}(#2)}
\newcommand{\oh}{\nicefrac{1}{2}}
\newcommand{\mS}{\mathbb{S}}
\newcommand{\Law}{\mathrm{Law}}
\newcommand{\true}{\mathrm{true}}
\newcommand{\md}{\mathrm{d}}
\newcommand{\Ito}{It\^o}
\newcommand{\tr}{\mathrm{tr}}
\newcommand{\DLR}{\mathrm{DLR}}
\newcommand{\KBP}{\mathrm{KBP}}
\newcommand{\kbp}{\textsc{Kbp}}
\newcommand{\enkf}{\textsc{Enkf}}
\newcommand{\dlr}{\textsc{Dlr}}
\newcommand{\fom}{\textsc{Fom}}
\newcommand{\st}{\,|\,}
\newcommand{\modt}[1]{{\color{black}#1}}
\newcommand{\mods}[1]{{\color{black}#1}}
\newcommand{\modr}[1]{{\color{black}#1}}
\newcommand{\modq}[1]{{\color{black}#1}}
\newcommand{\ra}[1]{{\color{black}#1}}
\newcommand{\rb}[1]{{\color{black}#1}}
\newcommand{\rc}[1]{{\color{black}#1}}
\newcommand{\rf}[1]{#1}
\newcommand{\hY}{\widehat{Y}}
\setlist[enumerate]{leftmargin=.5in}
\setlist[itemize]{leftmargin=.5in}
\crefname{hypothesis}{Hypothesis}{Hypotheses}
\crefname{fact}{Fact}{Facts}
\title{Dynamical Low-Rank Approximations for Kalman Filtering\thanks{Submitted to the editors October 28, 2025.
\funding{This work was funded by the Swiss National Science Foundation project ``Dynamical low rank methods for uncertainty quantification and data assimilation'' (n. 200518)}}}
\author{Fabio Nobile\thanks{Institute of Mathematics, \'Ecole Polytechnique F\'ed\'erale de Lausanne (EPFL), 1015 Lausanne, Switzerland (\email{fabio.nobile@epfl.ch}).}
\and Thomas Trigo Trindade \thanks{Computer, Electrical and Mathematical Sciences and Engineering, King Abdullah University of Science and Technology (KAUST), Thuwal 23955-6900, Kingdom of Saudi Arabia (\email{thomas.trigotrindade@kaust.edu.sa}).}}
\begin{document}

\maketitle

\begin{abstract}
	We propose a dynamical low rank approximation of the Kalman-Bucy process (DLR-KBP), which evolves the filtering distribution \mods{of a partially continuously observed linear SDE} on a small time-varying subspace at reduced computational cost. 
	This reduction is valid in presence of small noise and when the filtering distribution concentrates around a low dimensional subspace. 
	We further extend this approach to a DLR-ENKF process, where particles are evolved in a low dimensional time-varying subspace at reduced cost. 
	This allows for a significantly larger ensemble size compared to standard EnKF at equivalent cost, thereby lowering the Monte Carlo error and improving filter accuracy.
	Theoretical properties of the DLR-KBP and DLR-ENKF are investigated, including a propagation of chaos property.
	Numerical experiments demonstrate the effectiveness of the technique. 	
\end{abstract}

\begin{keywords}
	Dynamical Low-Rank Approximations, Kalman-Bucy Process, Ensemble Kalman Filtering 
\end{keywords}

% REQUIRED
\begin{MSCcodes}
	60G35, 60H35, 65C30, 65C35
\end{MSCcodes}

\section{Introduction}

\modt{
\rc{
Recent increases in computational power have enabled data assimilation at levels of accuracy and efficiency previously out of reach.
Despite this progress, accurate filtering of high-dimensional systems, such as those arising from the discretisation of PDEs, remains a challenging task, often incurring a prohibitive computational cost.
Several approximate filtering methods alleviate this burden, notably the Ensemble Kalman filter (\enkf), which evolves an ensemble of particles under dynamics combining the model equations with observations.

This work focuses on the settings in which the filtering distribution possesses a low-rank structure, that is, it concentrates around a low-dimensional (time-dependent) subspace. 
While not universal, this behaviour naturally arises in filtering: from a Bayesian viewpoint, for sufficiently informative observations and assimilation times the posterior contracts except along a few directions poorly constrained by the data; from a dynamics viewpoint, errors contract along stable directions, so that the uncertainty aligns with the (possibly few) unstable and neutral directions of the dynamics \cite{LyapunovPalatella2013}. 
More precisely, in the linear setting, the Kalman filter error covariance provably collapses onto the unstable–neutral subspace under suitable assumptions \cite{DegenerateCarrassi2017}; the \enkf~is then expected to track the filtering density reasonably well with a moderate number of particles. Still, a larger ensemble would reduce the Monte Carlo error and improve accuracy. This, however, is costly: while the \enkf~analysis step is confined to the span of the ensemble, the propagation of the particles takes place in the full state space, so that the computational cost scales with the full dimension rather than with that of the ``filtering subspace'' carrying most of the assimilation information. 
Identifying and exploiting this subspace is further complicated by the fact that it may vary significantly in time.}

\rc{We propose} to tackle these issues by establishing a mathematically principled framework for efficiently evolving a time-varying filtering subspace. 
The \mods{ultimate goal} is the development of a principled formulation for dynamically evolving low-rank Ensemble Kalman-type filters \mods{relying on the so-called Dynamical Low-Rank (\dlr) approximation method}. 
\modr{As a first step in this direction, in the first half of the paper we focus on} continuous-time Kalman-Bucy processes (\kbp) driven by linear-affine dynamics, and \mods{derive a \dlr~approximation thereof, named \dlr-\kbp}.  
\mods{As the} theory of linear–affine \kbp~is well established, \mods{it} provides a convenient setting for a detailed analysis of the proposed method. 

\mods{Dynamical Low-Rank approximations, originally developed in the context of matrix differential \mods{equations~\cite{DynamicalLowRKoch2007}}, evolve a low-rank, SVD-like approximation of the true matrix solution \mods{via suitable time-marching schemes~\cite{AnUnconventionCeruti2022,AProjectorSplLubich2014}}, while avoiding the repeated costly SVD truncation at each time-step. 
	\modr{DLR methods have} also been used in the context of Uncertainty Quantification for random PDEs~\cite{StabilityPropeKazash2021,ErrorAnalysisMushar2015,DynamicallyOrtSapsis2009} and SDEs~\cite{DynamicalLowZoccolan2023}.
	\modr{In this work, we follow the \rb{Dynamically Orthogonal (DO) formalism}, which was proposed in~\cite{DynamicallyOrtSapsis2009} in the context of approximation of time-dependent random fields, independently of the matrix DLR formulation~\cite{DynamicalLowRKoch2007}.}
It was subsequently observed that those formulations are equivalent~\cite{ErrorAnalysisMushar2015}, at least for non-degenerate dynamics.
Our work builds on~\cite{DynamicalLowZoccolan2023}, in which the authors formalise the notion of DLR equations and solutions \modr{for} Stochastic Differential Equations (SDEs).
The first step in our approach is to suitably generalise the DLR method to processes governed by an additional \Ito~process (the innovation process). 
We consider a framework with (conditional) mean-separation, and derive a set of DLR equations associated to these generalised processes. 
Those are subsequently applied to \mods{the} \kbp~\mods{process} to obtain the \dlr-\kbp~equations. 
\modr{In the second part of the paper, we leverage the formulation of the \dlr-\kbp\ equations} to propose a mathematically principled low-rank system of interacting particles, named \dlr-\enkf. 
\modr{We prove several important properties of the \dlr-\enkf, which support the validity of our proposed formulation.}
}

\mods{
	The main contributions of this work are summarised as follows. 
	We rigorously derive the \dlr-\kbp~system under the assumption that the filtered process has a low-rank representation.
	We prove several important properties of that system, among which its characterisation as a Gaussian process; from there, we derive a set of equations characterising the mean and the covariance (low-dimensional Riccati equation) in a reduced space, which can be computed at reduced cost. 
	\modr{
		Additionally, we quantify the approximation properties of the moments of the \dlr-\kbp~with respect to those of \kbp~using Gronwall-type bounds. 
	}
	Building on this framework, we propose a particle-based \modr{approximation} of the \modr{system}, the \dlr-\enkf, and establish its well-posedness.
	Our formulation is furthermore justified by a propagation of chaos result, linking the \dlr-\enkf~to its mean-field limit, the \dlr-\kbp.  
	\modr{We assess on some numerical test cases the performance of our \dlr-\kbp\ and \dlr-\enkf\ methods and validate the correctness of the theoretical claims.
		Our numerical results clearly show the improved accuracy of our \dlr-\enkf\ evolving $m \gg n$ particles in an $n$-dimensional subspace with respect to a plain \enkf\ which evolves $n$ high-dimensional particles, while having a comparable computational cost. 
	}

	Our framework naturally merges \dlr-type approximations with the continuous time Data Assimilation setting of Kalman-Bucy processes, and is thus part of the line of research combining (dynamical) low-rank approximation with data assimilation techniques. 
	Several other works fall into this category, which we now briefly discuss. 
	Closest to our setting of (linear-affine) time-continuous systems and time-continuous observations, we mention~\cite{LowRankApproximatedTsuzukiB2024,ComparisonOfEstYamada2021,OnANewLowRYamada2021} in which the authors propose a set of \textit{ad hoc} equations to \modr{evolve} the covariance in a low-rank fashion.
	When characterising the covariance of \modr{our} \dlr-\kbp~(and despite our setting being rooted in SDEs \modr{rather than} Linear Time-Invariant systems), we \modr{obtain the exact same equations as a consequence of the low-rank structure of the process. 
	We give therefore a rigorous justification for the Oja flow approach proposed in~\cite{LowRankApproximatedTsuzukiB2024}.}
	Our approach is furthermore supported by the well-posedness of the \dlr-\kbp~system, which we establish \modr{in this paper, as well as our result characterising the approximation properties of the \dlr-\kbp\ moments with respect to their \kbp\ counterparts}. 
	
	\rf{Several other works address} the case of discrete \rf{observations. 
	The} work~\cite{TheRankRedSchmidt2023} merges Kalman filtering and (matricial) Dynamical Low-Rank Approximations to efficiently perform the filtering of continuous-time/discrete-observations Linear Time Invariant systems.
\ra{A different approach is given by the blended particle filters framework \cite{BlendedPaMajda2014}, which builds on the (modified) quasilinear Gaussian-DO method \cite{BlendedRedSapsis2013} to perform data assimilation of chaotic large-scale systems.
Further from our setting, \cite{DataAssimilatiSonder2013,DataAssimilatiPt2Sonder2013} combines DLR approximations with Gaussian mixture models.}
}

\ra{In a broader scope, the integration of dimension reduction techniques \rf{in data assimilation} spans a vast literature.
	Approaches include using sequential SVD truncations to evolve low-rank approximations of the state ensemble \cite{PerformanceMajda2018}; tracking a few preferential -- often unstable -- directions in order to alleviate computational cost and/or improve filter stability \cite{ASingularDinh1998,StateEstBrian2001,OnDimensionSolonen2016}; multi-fidelity/\rf{multi-level} variants \cite{AnAdaptiveSilva2025}; and more recently machine learning techniques such as deep generative modeling \cite{StateobsZhuoyuan2025}, variational autoencoders \cite{EnsembleKIvo2025} and surrogate latent space models \cite{ReducedYuming2023}. 
	Our work distinguishes itself from the above approaches by (i) designing a set of principled evolution equations based on the (assumed known) model and observation dynamics, (ii) eschewing the need to perform a full-state SVD truncation at each update and (iii) being a purely online, data-free method that does not require a training phase. 
}

This paper is organised as follows. 
\Cref{sec:setup} introduces the concepts and notation relevant to Kalman-Bucy processes. 
In~\Cref{sec:dlr-for-da}, the DLR framework for SDEs of~\cite{DynamicalLowZoccolan2023} is extended to formulations suitable for Data Assimilation \modr{(including Kalman-Bucy processes)}, incorporating an innovation term. 
Once established, that framework is applied in~\Cref{sec:dlr-kbp} to the Kalman-Bucy Process, yielding the Dynamical Low-Rank Kalman-Bucy Process (\dlr-\kbp) \modr{whose well-posedness and main properties are also analysed.}
In \Cref{sec:dlr-enkf}, we propose and analyse an Ensemble Kalman-type extension of the \dlr-\kbp, the \dlr-\enkf, \modr{and present a propagation of chaos result.}
Finally, the numerical results in~\Cref{sec:numerical-experiments} validate several of the theoretical claims made in the work and demonstrate the potential of this technique.
}

\section{Setup} \label{sec:setup}

This section introduces the concepts, assumptions and related notation that will be used throughout the article. 
Denote by $\Mat{m \times n}$ the set of real-valued $m\times n$ matrices, and $\Mat[+]{m}$, resp. $\Mat[0]{m}$, the set of symmetric positive resp. semi-positive-definite $m \times m$ matrices.
We consider the evolution in time of the state $\Xcal_t \in \R^d$ of a system hereafter called the signal driven by the (linear-affine) \textit{true dynamics} 
\begin{align} 	
	\md \Xcal_t^{\mathrm{signal}} &= ( A \Xcal_t^{\mathrm{signal}} + \modt{f}) \md t + \Soh \md \widetilde{\Wcal}_t, \label{eqn:truth-model-dynamics}
\end{align}
with~$A \in \Mat{d \times d}$, $\modt{f} \in \R^d$\mods{,} $\Sigma \in \Mat[0]{d}$ \mods{and $\widetilde{\Wcal}_t$ a standard $d$-dimensional Brownian motion.}
The system is partially observed with observation process $\Zcal_t \in \R^k$ given by
\begin{align}
	\md \Zcal_t &= H \Xcal_t^{\mathrm{signal}} \md t + \Goh \md \widetilde{\Vcal}_t \label{eqn:truth-noisy-observations}, 
\end{align}
with $H \in \Mat{k \times d}$\mods{,} $\Gamma \in \Mat[+]{k}$\mods{, and $\widetilde{\Vcal}_t$ a standard $k$-dimensional Brownian motion independent of $\widetilde{\Wcal}_t$}.
The initial condition $\Xcal_0^{\mathrm{signal}}$ is assumed to be a $d$-dimensional Gaussian random variable independent of $(\widetilde{\Vcal}_t, \widetilde{\Wcal}_t)$. 

In Data Assimilation applications, the state space dimension $d$ is \modt{typically} much larger than the observation space dimension $k$. 
We set $\Zcal_0 = \bm{0} \in \R^k$ and denote $\Acal_{\Zcal_t} = \sigma(\Zcal_s, 0 \leq s \leq t)$ the sub-$\sigma$-algebra generated by the observations up to time $t$ \modt{(on a probability space that will be formally defined in~\cref{sec:dlr-for-da}).}
Broadly speaking, data assimilation concerns itself with the task of combining data streams with model knowledge in order to improve one's prediction about the state of the system or statistics thereof. 
In this work, we approach this task by computing (or approximating) the filtering measure $\eta_t = \Law(\Xcal^{\mathrm{signal}}_t | \Acal_{\Zcal_t})$. 

In the linear-affine Gaussian case of~\cref{eqn:truth-model-dynamics,eqn:truth-noisy-observations}, the filtering measure is Gaussian at all times, with mean and covariance $(m_t, P_t)$ verifying
\begin{align}
	\rd m_t &= (A m_t + \modt{f})\rd t + P_t H^{\top} \Gamma^{-1} (\rd \Zcal_t - H m_t \rd t), \label{eqn:kb-mean}
	\\
	\frac{\md }{\md t} P_t &= A P_t + P_t A^{\top} - P_t S P_t + \Sigma, \label{eqn:kb-cov}
\end{align}
where $S = H^{\top} \Gamma^{-1} H$. 
The latter equation is a~\textit{Riccati} equation; note that it does not depend on the observation process $\Zcal_t$.
%We will use the short-hand notation $\Ricc(P_t, A,\Sigma,H,\Gamma)$ to denote the right hand side.

In filtering applications, one possible approach \modt{to compute the filtering measure} consists in solving~\cref{eqn:kb-mean,eqn:kb-cov} directly, however, in situations where $d \gg 1$, updating~\cref{eqn:kb-cov} may be prohibitively expensive.
This situation commonly arises in geosciences such as weather forecasting~\cite{AnEnsemBuehner2017} or oceanography~\cite{MultilevelDaBeiser2025}.
%, inference of spatio-temporal Gaussian processes~\cite{InfiniteDimSarkka2012}, high-dimensional inverse problems~\cite{EnsembleKIglesias2013}, etc.
\modt{An alternative} approach consists in seeking a diffusion process $\Xcal_t$ whose probability law $\mu_t$ equals that of the true filtering measure. 
This is of particular interest when combined with particle methods, in which the empirical measure produced by the particles provides an approximation of the true filtering \modt{measure}. 
\modt{
Let $\mu$ be any probability measure on $\R^d$ (such as the law of a random variable), and define its covariance
\begin{equation*} 
	P_{\mu} = \int_{\R^d}  (x - m)(x - m)^\top \md \mu(x), \quad \text{where}\, m = \int_{\R^d} x \md \mu(x).
\end{equation*}
When the measure depends on time and no confusion arises, we will denote $P_t \equiv P_{\mu_{t}}$.}
In what follows, $\Wcal_t$ (resp. $\Vcal_t$, $\Xcal_0$) is an independent copy of $\widetilde{\Wcal}_t$ (resp. $\widetilde{\Vcal}_t$, $\Xcal^{\mathrm{signal}}_0$).
Then, consider the \modt{Kalman-Bucy Process~(\kbp)}
\begin{equation} \label{eqn:vkb}
	\rd \Xcal^{\mods{\KBP}}_t = (A \Xcal_t^{\mods{\KBP}}  + \modt{f}) \rd t + \Soh \rd \Wcal_t + P_{t} H^{\top} \Gamma^{-1} \left[ \rd \Zcal_t - (H \Xcal_t^{\mods{\KBP}} \rd t + \Goh \rd \Vcal_t)  \right] ,
\end{equation}
where $\mu_t = \mathrm{Law}(\Xcal_t \st \Acal_{\Zcal_t} )$. 
Since~\cref{eqn:vkb} is a inhomogeneous Ornstein-Uhlenbeck process, its Gaussian distribution is characterised by its mean and covariance, which can be shown to verify~\cref{eqn:kb-mean,eqn:kb-cov}, and hence $\mu_t = \eta_t$ \modt{at all times~\cite{OnTheMathematBishop2023}}.

\modt{
	In order to adhere to the natural setting in data assimilation where there effectively exists a unique ``true signal'', all of our discussion is from now on framed conditionally on $\Xcal_t^{\signal}$; that is, we assume to be working with a continuous realisation of $\Xcal_t^{\signal}$, and view $t \mapsto \Xcal_t^{\signal}$ as a continuous map $\R_{\geq 0} \to \R^d$.
	This is a reasonable assumption since the solution to~\cref{eqn:truth-model-dynamics} has continuous sample paths almost surely.
	Note that in turn, $H \Xcal_t^{\signal}$ is continuous too.
	The probability space on which we carry out the construction of the \kbp~(and its variants) therefore need not be linked to the probability space of the process $\Xcal_t^{\signal}$.
}

\begin{remark}
	Other diffusion processes also evolve the filtering distribution $\eta_t$ and could be considered instead.
	In particular we mention the popular ``deterministic'' Kalman-Bucy filter~\cite{OnTheMathematBishop2023,ADetermSakov2008}, given by
	\begin{equation}
		\rd \Xcal_t^{\mods{\KBP}} = (A \Xcal_t^{\mods{\KBP}} + \modt{f})\rd t + \Soh \rd \Wcal_t + P_{t} H^{\top} \Gamma^{-1} \left[ \rd \Zcal_t - H \left( \frac{\Xcal_t^{\mods{\KBP}} +  \modt{\E[\Xcal_t^{\mods{\KBP}} \st \Acal_{\Zcal_t}]}}{2} \right)  \rd t  \right]. \label{eqn:dkb}
	\end{equation}
\end{remark}

\subsection{Additional notation}

We briefly introduce some additional notation that will be used throughout the text. 
The (real) Stiefel manifold is denoted by $\St(d,R) = \{ U \in \Mat{d \times R}, U^{\top} U = \bm{I}_R \}$. 
For $A,B \in \Mat{n \times m}$, the Frobenius scalar product $\langle A,B \rangle_F = \tr(A^{\top}B)$ induces the Frobenius norm $\langle A,A \rangle_F = \norm{A}^2_F$.
Note that $\tr(A^{\top}B) \leq \norm{A}_F \norm{B}_F$ for $A,B \in \Mat{n \times m}$.
We will also use the property
\begin{equation} \label{eqn:tr-lbda-ineq}
	\lambda_{\min}(A) \tr(B) \leq \tr(A^{\top} B) \leq \lambda_{\max}(A) \tr(B),
\end{equation}
which holds true whenever $A$ is symmetric and $B \in \Mat[0]{d \times d}$.
The cyclical property $\tr(ABC) = \tr(CAB)$ holds true whenever the matrix dimensions match.
For $A \in \Mat{m \times n}, B \in \Mat{p \times q}$, the Kronecker product $A \otimes B \in \Mat{mp \times nq}$ verifies $(A \otimes B)_{pr + v, qs + w} = A_{rs} B_{vw}$.
For $A \in \Mat{m \times n}$, the vectorisation $\mathrm{vec}(A)$ consists in stacking the columns of $A$ on top of each other. 
Note the useful identity $\mathrm{vec}(ABC) = (C^{\top} \otimes A) \mathrm{vec}(B)$.
We will occasionally index a matrix $A(i,j) = A_{ij}$, and use MATLAB-style \modt{notation} $A(i,:)$, resp. $A(:,i)$, to access the $i$-th row resp. column.
For $A,B \in \Mat[0]{d \times d}$, we consider the (partial) Loewner order $A \geq (>) B$ if $A - B \in \Mat[0(+)]{d \times d}$.
Importantly,
\begin{equation} \label{eqn:loew-frob}
	A \geq B \implies \norm{A}^2_F \geq \norm{B}^2_F
\end{equation}
(this can be verified via the definition of trace as sum of eigenvalues and using Weyl's theorem.)
Finally, \mods{for} $A \in \Mat{m \times n}$, \mods{we denote by} $\Tcal_R(A) \in \mathrm{argmin} \{ \norm{A - B}_F, \mathrm{rank}(B) \leq R \}$ \modt{a best rank-$R$ approximation of $A$; it is unique if the $R$-th singular value $\sigma_{R}(A)$ \modr{is strictly larger than} $\sigma_{R+1}(A)$}. 

For two given processes $\Xcal_t, \Ycal_t$, the \textit{quadratic variation} is denoted by $[\Xcal_t]_t$, while the \textit{quadratic co-variation} is denoted~$[\Xcal_t, \Ycal_t]_t$.
The notation $\Xcal_t(i)$ indexes the $i$-th component of that process.

\section{Dynamical Low-Rank framework for Data Assimilation} \label{sec:dlr-for-da}

Our aim is to propose DLR equations that \rf{approximate} processes like~\cref{eqn:vkb} or~\cref{eqn:dkb}.
To this end, \modt{we formulate the \rf{filtering} problem in a more general form, so as to accommodate both processes~\cref{eqn:vkb} and~\cref{eqn:dkb} (and possibly more).
	Let $(\Omega, \Acal , \bbP; (\Acal_t)_{t \geq 0})$ be a complete filtered probability space, also right-continuous, $\{\overline{\Wcal}_t, \Zcal_t\}_{t \geq 0}$ two independent, $\Acal_t$-adapted processes with $\overline{\Wcal}_t$ a $\bar{d}$-dimensional Brownian motion, with $\bar{d} \geq d$ (this to accomodate processes like~\cref{eqn:vkb}, which depend on two Brownian motions $\Wcal_t$ and $\Vcal_t$; in that situation, $\overline{\Wcal}_t$ will be the concatenation of both Brownian motions), and $\Zcal_t$ a $k$-dimensional \Ito~process with a.s. continuous sample paths (consequently $\Acal_{\Zcal_t}$ is right-continuous) verifying
	%let $\widetilde{\Wcal}_t$ and $\widetilde{\Vcal}_t$ be $d$-dimensional resp. $k$-dimensional pairwise independent, $\Acal_t$-adapted standard Wiener processes. }
	%Let 
	\begin{equation}
		\md \Zcal_t = \Hcal_t \md t + \gamma_t \md \widetilde{\Vcal}_t,
	\end{equation}
	with $\Hcal_t$, $\gamma_t$ continuous, adapted and where $\E[\int_{0}^T |\Hcal_t| \md t] < \infty$, $\E[\int_{0}^T |\gamma_t|^2 \md t] < \infty$ and $\{\widetilde{\Vcal}_t\}_{t \geq 0}$ a $k$-dimensional Brownian motion.
	We consider the \modt{(McKean-Vlasov)} SDE
	\begin{equation} \label{eqn:general-SDE}
		\md \Xcal_t^{\ra{\true}} = a(t, \Xcal_t^{\ra{\true}}, \mu_t^{\ra{\true}}) \md t + b(t, \Xcal_t, \mu_t^{\ra{\true}}) \md \overline{\Wcal}_t + c(t, \Xcal_t^{\ra{\true}}, \mu_t^{\ra{\true}}) \md \Zcal_t,
	\end{equation}
	\modt{where $\mu_t^{\ra{\true}} = \Law(\Xcal_t^{\ra{\true}}\st \Acal_{\Zcal_t})$}, $a : \R_{+} \times \R^d \times \Pcal(\R^d) \to \R^d$, $b : \R_{+} \times \R^d \times \Pcal(\R^d) \to \R^{d \times \bar{d}}$, $c : \R_{+} \times \R^d \times \Pcal(\R^d) \to \R^{d \times k}$.
	In what follows, to alleviate notation, we will denote the coefficients \modr{in~\cref{eqn:general-SDE}} by $a(t, \omega), b(t, \omega), c(t, \omega)$.	
	\ra{\rf{As mentioned above, \cref{eqn:general-SDE} should be seen as a generalisation of \cref{eqn:vkb,eqn:dkb}.
For example, \cref{eqn:vkb} can be recovered by setting $a(t, \Xcal_t^{\true}, \mu_t^{\true}) = A \Xcal_t^{\true} + f - P_t H^{\top} \Gamma^{-1} H \Xcal_t^{\true}$, $b(t, \Xcal_t^{\true}, \mu_t^{\true}) = [\Soh, -P_t H^{\top} \Gmoh]$ and $c(t, \Xcal_t^{\true}, \mu_t^{\true}) = P_t H^{\top} \Gamma^{-1}$, with $P_t = \E[(\Xcal_t^{\true}) (\Xcal_t^{\true})^{\top} \st \Acal_{\Zcal_t}]$ depending on the conditional law of $\Xcal_t^{\true}$.}}
	We will make a repeated use of the conditional expectation taken with respect to the sub-$\sigma$-algebra $\Acal_{\Zcal_t}$,} which for $f \in L^1(\Omega)$ is denoted by  $\E[f | \Acal_{\Zcal_t} ]$.
Furthermore, define 
\begin{equation*} %\label{eqn:l2zmean-conditional}
	L^2_{0,\Acal_{\Zcal_t}}(\Omega) = \{Z \in L^2(\Omega), \E[Z \st \Acal_{\Zcal_t}] = 0 \}, 	\end{equation*}
\mods{and, for $Y \in L^1(\Omega)$,}
\begin{equation*}
	\mods{Y^{\star} = Y - \E[Y \st \Acal_{\Zcal_t}].}
\end{equation*}
We further assume that \mods{\cref{eqn:general-SDE} admits a unique strong solution and that} sample paths \rf{of $\Xcal_t^{\true}$} are continuous almost surely.

\ra{The purpose of this section is to propose a set of equations characterising an auxiliary process $\Xcal_t^{\DLR}$ admitting a low-rank decomposition and which approximates $\Xcal_t^{\true}$. 
	We begin by detailing the requirements we impose on that auxiliary process.  
	\begin{enumerate}[label=(\roman*)]
		\item (Low-rank decomposition)  \label{item:lrdecomp}
			The process $\Xcal_t^{\DLR}$ verifies
			\begin{equation} \label{eqn:low-rank-ansatz}
				\Xcal_t^{\ra{\DLR}} = U^0_t + \sum_{i=1}^R U_t^i Y_t^i = U^0_t + \bU_t \bY_t^{\top}, \quad \forall t \geq 0,
			\end{equation}
			where $U^0_t$ and $\bU_t = [U_t^1, \ldots, U_t^R]$, \rf{hereafter called physical modes,} are $\Acal_{\Zcal_t}$-measurable (and therefore independent of $\overline{\Wcal}_t$), $\bY_t = [Y_t^1, \ldots, Y_t^R]$, \rf{hereafter called stochastic modes, are} such that $Y_t^i \in L^2_{0,\Acal_{\Zcal_t}}(\Omega)$ {for $i = 1, \ldots, R$}.
		\item (Modes characterisation)  \label{item:modecharacterisation}
			The modes $(U^0_t, U^i_t, Y^i_t)$ {satisfy stochastic differential equations} \rf{of the form}
			\begin{align} \label{eqn:modes-eqs}
				\md U_t^0 &= \alpha_t^0 \md t + \beta_t^0 \md \Zcal_t, & 
				\md U_t^i &= \alpha_t^i \md t + \beta_t^i \md \Zcal_t,
					  & \md Y_t^i &= \gamma_t^i \md t + \delta_t^i \md \overline{\Wcal}_t + \varepsilon_t^i \md \Zcal_t,
			\end{align}
			where the processes {$\{\alpha_t^j\}_{t \geq 0}$ ($\R^d$-valued), $\{\beta_t^j\}_{t \geq 0}$ ($\Mat{d \times k}$-valued)}, for $j=0, \ldots, R$ are {continuous $\Acal_{\Zcal_t}$-adapted}, {whereas} $\gamma_t^i \in \R$, $\delta_t^i \in \Mat{1 \times \bar{d}}$ and $\varepsilon_t^i \in \Mat{1 \times k}$ for $i = 1, \ldots, R$ are {continuous {$\Acal_{t}$-adapted}}.
		\item (\rf{Physical modes constraint}) \label{item:coeffconstraint}
			\rf{The physical modes $U_t^i$ satisfy the orthogonality conditions}	
			\begin{align} \label{eqn:mod-do}
				(U^i_t)^{\top} \alpha_t^j &= 0 & (U^i_t)^{\top} \beta_t^j &= 0 & 1 \leq i,j \leq R.
			\end{align}
	%	\item (Consistency in the idealised regime) \label{item:consistencyidealisesregime}
	%		If the true process~\eqref{eqn:general-SDE} admits the decomposition $\Xcal_t^{\true} = \tilde{U}_t^0 + \sum_{i=1}^R \tilde{U}_t^i \tilde{Y}_t^i$ for $t \geq 0$ and some $R \geq 1$, and furthermore $(\tilde{U}_t^0, \tilde{U}_t^i, \tilde{Y}_t^i)$ satisfy \ref{item:modecharacterisation} and \ref{item:coeffconstraint}, then $\Xcal_t^{\DLR} = \Xcal_t^{\true}$ and $U_t^j = \tilde{U}_t^j$, $Y_t^i = \tilde{Y}_t^i$ for $i = 1, \ldots, R$, $j=0,\ldots,R$. 
	\end{enumerate}

Concerning \ref{item:modecharacterisation}, the stochastic processes $Y_t^i$, $i=1, \ldots,R$ are imposed to have zero conditional mean $\E[Y_t^i \st \Acal_{\Zcal_t}] = 0$ in order to avoid redundancy in the representation~\eqref{eqn:low-rank-ansatz} with the mean process $U^0_t$.
This ensures that physical modes may recover information from the observation process, while leaving the stochastic fluctuations of the Brownian motion $\overline{\Wcal}_t$ to be captured solely by the stochastic modes.
Note that, by separating the mean, the approximation is suboptimal with respect to the $(R+1)$ best low-rank approximation, but as will be discussed below, this construction displays favourable properties in the case of linear-affine models. 

Condition \ref{item:coeffconstraint} is imposed in order to uniquely characterise the evolution equations of the modes.  
In the usual DLR setting, this is obtained by imposing the \rb{DO condition} \cite{DynamicallyOrtSapsis2009} (also called \textit{gauge conditions} in the matricial DLR context~\cite{AProjectorSplLubich2014}), which reads $\dot{\bU}_t^{\top} \bU_t = 0$ ({alternatively, in Dual DO~\cite{DualDynamicallMushar2018}, $\dot{\bY}_t^{\top} \bY_t = 0$}).
This is a priori not possible here since $\bU_t$ \rf{(resp. $\bY_t$) is not a differentiable process}.
\rf{We replace such condition with $\md \bU_t^{\top} \bU_t = 0$ which leads to \cref{eqn:mod-do}}.}

\rb{We now present a system \rf{of equations to evolve $(U_t^0, U_t^i, Y_t^i)$} verifying \rf{Conditions \ref{item:lrdecomp}--\ref{item:coeffconstraint}.} 
	Before doing so, we introduce the following notation, which will be useful in what follows.
	Define the Gram matrices 
$(M_{\bU_t})_{ij} = (U_t^i)^\top U_t^j$ and $(M_{\bY_t})_{ij} = \E [Y_t^i Y_t^j \st \Acal_{\Zcal_t}]$; furthermore denote by $\Pi_{\bU_t} = \bU_t (M_{\bU_t})^{-1} \bU_t^{\top}$ the orthogonal projector onto the range of $\bU_t$, and let $\Pi_{\bU_t}^{\perp} = I - \Pi_{\bU_t}$ denote the projector onto the orthogonal complement of $\bU_t$.}
\ra{
	\rf{With this in place, we propose the following \textbf{mean-separated DO equations:}}
	\begin{align}
		\md U_t^0 &= \E[\rf{a_t} \st \Acal_{\Zcal_t}] \rd t + \E[\rf{c_t} \st \Acal_{\Zcal_t} ] \rd \Zcal_t, \label{eqn:zeromean-mean} \\
    \md \bU_t &= {\Pi^{\perp}_{\bU_t}} \E \left[( \rf{a_t^{\star}} - \rf{G_t} ) (\bY_t M_{\bY_t}^{-1} ) \st \Acal_{\Zcal_t}  \right] \md t + \rc{\sum_{j=1}^k{\Pi^{\perp}_{\bU_t}} \mathbb{E} [ (c^{\star}_t)_{:,j} (\bY_t M_{\bY_t}^{-1} ) \st \Acal_{\Zcal_t}] \md \Zcal_t^j} , \label{eqn:zeromean-phys} \\
		\md \bY_t^{\top} &= M_{\bU_t}^{-1} \bU_t^{\top} \rf{a_t^{\star}} \md t + M_{\bU_t}^{-1} \bU_t^{\top} \rf{b_t} \md \overline{\Wcal}_t + M_{\bU_t}^{-1} \bU_t^{\top} \rf{c_t^{\star}} \md \Zcal_t, \label{eqn:zeromean-stoch} 
	\end{align}
	where
	\begin{equation} \label{eqn:correction-term-general}
		\rf{G_t} =
		\sum_{i=1}^R
		\sum_{s,s^\prime = 1}^k 
		\left[
		{\Pi^{\perp}_{\bU_t}}  \mathbb{E} [\rf{c^{\star}_t} (\bY_t  M_{\bY_t}^{-1})_{i} \st \Acal_{\Zcal_t} \right]
		(:,s) 
		\cdot 
    \left[ (M_{\bU_t}^{-1} \bU_t^{\rc{\top}})_i \rf{c^{\star}_t} \right](s') 
		\cdot
		\rd [\Zcal_t^s, \Zcal_t^{s^{\prime}}]_t,
	\end{equation}
	\rf{where $a_t$, $b_t$, $c_t$ denote $a$, $b$, $c$ evaluated at $(t, \Xcal_t^{\DLR}, \Law(\Xcal_t^{\DLR}))$ with $\Xcal_t^{\DLR} = U_t^0 + \bU_t \bY_t^{\top}$.}
	%where in~\cref{eqn:zeromean-mean,eqn:zeromean-phys,eqn:zeromean-stoch,eqn:correction-term-general} the terms $a,b,c$ are evaluated at $(t, \Xcal_t^{\DLR}, \Law(\Xcal_t^{\DLR}))$ with $\Xcal_t^{\DLR} = U_t^0 + \bU_t \bY_t^{\top}$.

	\rf{The justification of \cref{eqn:zeromean-mean,eqn:zeromean-phys,eqn:zeromean-stoch} comes from the consistency result given by the following theorem, which roughly says ``if $\Xcal_t^{\true}$ is exactly low-rank, $\Xcal_t^{\true} = \tilde{U}_t^0  + \sum_{i=1}^R \tilde{U}_t^i \tilde{Y}_t^i$, then $(\tilde{U}_t^0, \tilde{U}_t^i, \tilde{Y}_t^i)$ must satisfy \cref{eqn:zeromean-mean,eqn:zeromean-phys,eqn:zeromean-stoch}''.}
\begin{theorem} \label{th:meansepDO}
	\rf{The system \cref{eqn:zeromean-mean,eqn:zeromean-phys,eqn:zeromean-stoch} satisfies Conditions \ref{item:lrdecomp}--\ref{item:coeffconstraint}. 
		Moreover, if for $t \geq 0$, $\Xcal_t^{\true}$ admits the decomposition $\Xcal_t^{\true} = \tilde{U}_t^0 + \sum_{i=1}^R \tilde{U}_t^i \tilde{Y}_t^i$ for some $R \geq 1$, with $(\tilde{U}_t^0, \tilde{U}_t^i, \tilde{Y}_t^i)$ processes of type \ref{item:modecharacterisation} satisfying the corresponding orthogonality conditions \ref{item:coeffconstraint}, and under the technical conditions of \cref{lem:cond-expec}, then $(\tilde{U}_t^0, \tilde{U}_t^j, \tilde{Y}_t^j)$ solve \cref{eqn:zeromean-mean,eqn:zeromean-phys,eqn:zeromean-stoch}.
	}
\end{theorem}
\begin{proof}
	See Appendix \ref{app:a}.
\end{proof}
}

We conclude this section by noting that the (conditional) mean and covariance of the DLR solution are characterised as follows.
\begin{lemma}
	Let~$\Xcal_t^{\ra{\DLR}} = U_t^0 + \bU_t \bY_t^{\top}$ \modr{be} a solution to~\cref{eqn:zeromean-mean,eqn:zeromean-phys,eqn:zeromean-stoch}. 
	Then, the conditional mean $m_t$ is $\E[\Xcal_t^{\ra{\DLR}} \st \Acal_{\Zcal_t}] = U^0_t$ and the \rf{conditional covariance $P_t = \E[(\Xcal_t^{\DLR} - m_t) (\Xcal_t^{\DLR} - m_t)^{\top} \st \Acal_{\Zcal_t}]$} verifies
	\begin{equation} 
		P_t = \bU_t M_{\bY_t} \bU_t^{\top}. \label{eqn:cov-lr}
	\end{equation}
\end{lemma}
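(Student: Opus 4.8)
The plan is to verify both identities by direct computation, exploiting two structural features of the ansatz~\eqref{eqn:low-rank-ansatz}: the $\Acal_{\Zcal_t}$-measurability of $U^0_t$ and of the columns of $\bU_t$, and the zero conditional mean $\E[Y^i_t \st \Acal_{\Zcal_t}] = 0$ of the stochastic modes. No delicate machinery is needed; the work is essentially to keep track of which expectations are conditional and to justify pulling $\Acal_{\Zcal_t}$-measurable factors through $\E[\,\cdot \st \Acal_{\Zcal_t}]$.

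First I would establish the mean. Taking the conditional expectation of $\Xcal_t = U^0_t + \sum_{i=1}^R U^i_t Y^i_t$, the term $U^0_t$ is $\Acal_{\Zcal_t}$-measurable and passes through unchanged; in each summand $U^i_t$ is likewise $\Acal_{\Zcal_t}$-measurable and may be factored out, leaving $U^i_t\,\E[Y^i_t \st \Acal_{\Zcal_t}] = 0$ by the zero-mean property $Y^i_t \in L^2_{0,\Acal_{\Zcal_t}}(\Omega)$. Hence $m_t = \E[\Xcal_t \st \Acal_{\Zcal_t}] = U^0_t$.

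Next I would compute the covariance $P_t = P_{\mu_t}$ which, since $\mu_t = \Law(\Xcal_t \st \Acal_{\Zcal_t})$, is the $\Acal_{\Zcal_t}$-measurable random matrix $P_t = \E[(\Xcal_t - m_t)(\Xcal_t - m_t)^\top \st \Acal_{\Zcal_t}]$. Using the mean identity just obtained, the centred process is exactly $\Xcal_t - m_t = \bU_t \bY_t^\top$, so
\begin{equation*}
	P_t = \E[\bU_t \bY_t^\top \bY_t \bU_t^\top \st \Acal_{\Zcal_t}] = \bU_t \, \E[\bY_t^\top \bY_t \st \Acal_{\Zcal_t}] \, \bU_t^\top,
\end{equation*}
where $\bU_t$ and $\bU_t^\top$ leave the conditional expectation by $\Acal_{\Zcal_t}$-measurability. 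It then remains to recognise the middle factor: the $(i,j)$ entry of $\bY_t^\top \bY_t$ is $Y^i_t Y^j_t$, so $\E[\bY_t^\top \bY_t \st \Acal_{\Zcal_t}]_{ij} = \E[Y^i_t Y^j_t \st \Acal_{\Zcal_t}] = (M_{\bY_t})_{ij}$, which is precisely the definition of the Gram matrix $M_{\bY_t}$. This yields $P_t = \bU_t M_{\bY_t} \bU_t^\top$, as claimed.

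Since both steps are elementary, I do not expect a genuine obstacle; the only points requiring care are book-keeping ones. I would make sure the covariance is read as the \emph{conditional} covariance attached to $\mu_t$ rather than an unconditional one, and that the factoring of $\bU_t$ out of $\E[\,\cdot \st \Acal_{\Zcal_t}]$ is legitimate, which it is because each $U^i_t$ is $\Acal_{\Zcal_t}$-measurable. Finally, a mild integrability remark — that $Y^i_t \in L^2_{0,\Acal_{\Zcal_t}}(\Omega)$ together with the continuity of the modes guarantees the relevant conditional second moments are finite — ensures that $M_{\bY_t}$ and hence $P_t$ are well defined.
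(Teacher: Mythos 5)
Your proof is correct: the paper states this lemma without proof, treating it as an immediate consequence of the ansatz, and your computation (factoring the $\Acal_{\Zcal_t}$-measurable modes $U^0_t$, $\bU_t$ out of the conditional expectation and using $\E[Y^i_t \st \Acal_{\Zcal_t}] = 0$ together with the definition $(M_{\bY_t})_{ij} = \E[Y^i_t Y^j_t \st \Acal_{\Zcal_t}]$) is exactly the intended argument. Your book-keeping remarks, reading $P_t$ as the conditional covariance of $\mu_t = \Law(\Xcal_t \st \Acal_{\Zcal_t})$ and noting the $L^2$ integrability of the stochastic modes, are also consistent with the paper's setup.
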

In particular, the covariance has (co-)range in $\mathrm{range}(\bU_t)$.

\section{DLR Kalman-Bucy Process} \label{sec:dlr-kbp}

We now apply the mean-separated DO equations\modt{~\eqref{eqn:zeromean-mean} to~\eqref{eqn:zeromean-stoch} to the specific case of the \kbp~\eqref{eqn:vkb}}.
Reordering the terms~\modt{in~\cref{eqn:vkb}} yields
\begin{align*}
	\rd \Xcal_t^{\ra{\KBP}} &= ((A - P_t S) \Xcal_t^{\ra{\KBP}} + \modt{f}) \rd t + [\Soh, - P_t H^{\top} \Gmoh] [\rd \Wcal_t, \rd \Vcal_t]^{\top} + P_t H^{\top} \Gamma^{-1} \rd \Zcal_t \\
				&=  \rf{a_t} \rd t + \rf{b_t} \rd \overline{\Wcal}_t  + \rf{c_t} \rd \Zcal_t,
\end{align*}
where $\md\overline{\Wcal}_t = [\rd \Wcal_t, \rd \Vcal_t]^{\top} $ is the concatenation of the two Brownian motions in order to adhere to the setting of~\cref{eqn:general-SDE}.
\modr{Introducing the notation $\tilde{A} = A - P_{t} S$, observe} that 
\begin{align*}
	\E[\rf{a_t} \st \Acal_{\Zcal_t}] &= \tilde{A} \E [ \Xcal_t^{\ra{\KBP}} \st \Acal_{\Zcal_t} ] + \modt{f} , 
					 & \E[\rf{c_t} \st \Acal_{\Zcal_t}] &= P_t H^{\top} \Gamma^{-1}, 
					 & \rf{b_t} = [\Soh, - P_t H^{\top} \Gmoh], \\
	\rf{a_t^{\star}} &= \tilde{A}  \Xcal_t^{\star}, 
		  & \rf{c_t^{\star}}& = 0.
\end{align*}
Applying the DLR~equations~\eqref{eqn:zeromean-mean}-\eqref{eqn:zeromean-stoch} yields the \dlr-\kbp~equations \modt{given below}. 
Focussing on~\cref{eqn:zeromean-phys}, the dynamics \modt{simplifies} substantially to $\md \bU_t = \modt{\Pi^{\perp}_{\bU_t}} A \bU_t \md t $, as \modt{both} the correction term $\rf{G_t}$ \modt{and} the noise source vanish owing to $\rf{c_t^{\star}} = 0$, and \modt{moreover} $\modt{\Pi^{\perp}_{\bU_t}} \tilde{A} = \modt{\Pi^{\perp}_{\bU_t}} A$ owing to $\modt{\Pi^{\perp}_{\bU_t}} P_t S = 0$.
In that situation, $\bU_t$ is now absolutely continuous and without loss of generality we may assume that $\bU_0 \in \mathrm{St}(d,R)$, which implies $\bU_t \in \mathrm{St}(d,R)$ for \modt{all} $t \geq 0$, and hence $M^{-1}_{\bU_t} = \bm{I}_{R}$.
Thus, the \textbf{DLR-KBP equations} become
\begin{align}
	\md U_t^0 &= (A U_t^0 + \modt{f}) \rd t + P_t H^{\top} \Gamma^{-1} (\rd \Zcal_t - H U^0_t \rd t)  \label{eqn:kb-dlr-U0} \\
	\md \bU_t &= \modt{\Pi^{\perp}_{\bU_t}} A \bU_t \md t \label{eqn:kb-dlr-U} \\
	\md \bY_t^{\top} &= \bU_t^{\top} \tilde{A} \modt{\bU_t \bY_t^{\top}} \md t + \bU_t^{\top} \Soh \rd \Wcal_t - \bU_t^{\top} P_{t} H^{\top} \Gmoh \md \Vcal_t, \label{eqn:kb-dlr-Y}
\end{align}
\modt{where the covariance $P_t$ is characterised by~\cref{eqn:cov-lr}.}
It is noteworthy that the observation process has no impact on the physical modes $\bU_t$ and the stochastic modes $\bY_t$. 
This however is in line with the fact that the evolution equation of the covariance of the \kbp~\cref{eqn:kb-cov} is independent \mods{of} the observations. 

%\modt{
%\begin{remark}
%	The fact that~\cref{eqn:kb-dlr-U} does not depend on the observation process is a consequence of $\md \Zcal_t$ being left-multiplied by $P_t$, hence in the range of $\bU_t$.
%	This holds true even when not separating the mean in the DLR construction.
%	On the other hand, the interest of using a mean-separated DO formalism becomes apparent for linear-affine dynamics. 
%	Notice indeed that~\cref{eqn:kb-dlr-U0} for $U^0_t$ corresponds to~\cref{eqn:kb-mean} for the mean of the \kbp.
%	Had the mean not been treated separately, the affine dynamics would in general not be recovered exactly, leading to a mean equation different from that of \kbp.	
%\end{remark}
%}

The \ra{\dlr-\kbp\ \cref{eqn:kb-dlr-U0,eqn:kb-dlr-U,eqn:kb-dlr-Y} is in general \emph{not}} equivalent to \ra{the \kbp\ }~\cref{eqn:vkb}. 
Indeed, \ra{it is a direct check to verify that the \dlr-\kbp\ \rf{solution} $\Xcal_t^{\DLR-\KBP} = U_t^0 + \bU_t \bY_t^{\top}$ solves}
\begin{equation} \label{eqn:dlr-vkb}
	\rd \Xcal_t = (A \Xcal_t +  \modt{f}) \rd t + \modt{\Pi_{\bU_t}} \Soh \rd \Wcal_t + P_{t} H^{\top} \Gamma^{-1} \left[ \rd \Zcal_t - (H \Xcal_t \rd t + \Goh \rd \Vcal_t)  \right],
\end{equation}
\ra{with $U_t^0, \bU_t, \bY_t$ solving \cref{eqn:kb-dlr-U0,eqn:kb-dlr-U,eqn:kb-dlr-Y}, and where \rf{for leaner notation, we denote $\Xcal_t \equiv \Xcal_t^{\DLR-\KBP}$.}}
\ra{It is clear that equality between the right-hand-sides \cref{eqn:vkb} and \cref{eqn:dlr-vkb} holds} only if $\modt{\Pi_{\bU_t}} \Sigma  = \Sigma$ for \modt{all} $t \geq 0$. 
\ra{The \dlr-\kbp\ is therefore expected to accurately approximate the \kbp\ in the case where $\Pi_{\bU_t} \Soh \approx \Soh$, or equivalently, when the \textit{orthogonal defect}
\begin{equation} \label{eqn:mea-sigma}
	\ra{\|\Pi^{\perp}_{\bU_t} \Soh\|_F \eqqcolon \varepsilon_R(t)}
\end{equation}
remains suitably small; this statement is made precise in \cref{ssub:error-approx}, which provides a Gronwall-type quantification of the deviation of the \dlr-\kbp\ mean and covariance with respect to their \kbp\ counterparts that explicitly depends on the magnitude of the orthogonal defect.
A small orthogonal defect is therefore a sufficient condition to ensure that the \kbp\ admits a suitable low-rank approximation.
Such low-rank-approximable filtering processes naturally occur in data assimilation problems, as evidenced for example by processes displaying a strongly anisotropic (posterior) covariance; or, for the Ensemble Kalman filter \rf{by} the fact that a moderate number of particles (hence rank) is often sufficient to perform signal-tracking.

The orthogonal defect is evidently \emph{not} small for isotropic models $\Sigma = \sigma I$ with moderate/large $\sigma$. 
In data assimilation settings, this \rf{may naturally occur} when certain scales of motion are under-resolved by the model, and accounting for those errors is necessary \cite{AccountingLewis2015}. 
Still, several distinct data assimilation settings are naturally compatible with small orthogonal defects; we list some of them below. 
\begin{itemize}
	\item Small and zero-noise models (``perfect models'') are commonly considered in data assimilation problems, either as an independent object of study \cite{WellPosedKelly2014}; \rf{or} in idealised test regimes for twin experiments or Observing System Simulation Experiments (OSSEs) \cite{AssessingSzunyogh2005}; or because the problem enables high confidence in the model dynamics (e.g. short synchronisation periods for smart grid state estimation models \cite{SmartGridTodescato2020}). %; because the model noise is negligible compared to the observation noise. 
Pragmatically, \rf{identifying a model together with} the observation/model noises can be challenging, and methods such as the Strong Constraint 4D-Var \cite{DataAsEvensen2022} (for smoothing) forego this by considering zero-model noise with noisy observations regime.  
	\item The noise covariance may itself \rf{have} a low-rank structure $L_t L_t^{\top} = \Soh$, which can in principle be captured by a sufficiently large basis $\bU_t$. 
This setting is often assumed in works relying on rank-reduced formalisms \cite{TheRankRedSchmidt2023,TheRedRankGillijns2006}.  
One example arises in problems where the disturbances are structured and the noise enters through a low-dimensional forcing map such as Simultaneous Localisation And Mapping (SLAM) problems for autonomous driving, where the augmented state process only injects noise for the vehicle position \cite{ASolutionDissanayake2001}. 
While there is no a priori guarantee that $\bU_t$ aligns with the low-rank $L_t L_t^{\top} = \Soh_t$, alternative formulations that enrich $\bU_t$ by a basis tracking $L_t$ could be of interest, but will not be investigated further in this work. 
\item In yet another context, SPDE-based data assimilation problems often focus on the case where operator and noise kernels share a common spectrum \cite{AccuracyBrett2013}; as we show below, under suitable assumptions $\bU_t$ aligns with the dominant eigenmodes of the operator, and, provided the model noise spectrum decays quickly, can thus also capture the dominant noise effects.
\end{itemize}
}

\modr{
	In the following sections, \rf{we analyse the main properties of the \dlr-\kbp\ under} the assumption that the \rf{system~\cref{eqn:kb-dlr-U0,eqn:kb-dlr-U,eqn:kb-dlr-Y} admits} a unique strong solution.
	We postpone the discussion on the well-posedness of the system at the end of~\cref{ssec:red-ricc}.
}

\subsection{Basic properties of DLR-KBP}

We now explore several properties of~\cref{eqn:kb-dlr-U0,eqn:kb-dlr-U,eqn:kb-dlr-Y}.
In particular, we show that the \dlr-\kbp~is a Gaussian process, with mean and covariance consistent with those of the original~\kbp.

\subsubsection{Gaussian law}

\mods{
	As it will be useful in the following, the (conditional) zero-mean part $\Xcal_t^{\star} = \Xcal_t - \E[\Xcal_t \st \Acal_{\Zcal_t}]$ verifies the following SDE
	\begin{equation} \label{eqn:dXtstar}
		\md \Xcal_t^{\star} = A \Xcal_t^{\star} \rd t + \modt{\Pi_{\bU_t}} \Soh \rd \Wcal_t - P_{t} H^{\top} \Gamma^{-1} H \Xcal_t^{\star} \rd t + P_{t} H^{\top} \Gmoh \rd \Vcal_t.
	\end{equation}
}
 
We readily obtain the following characterisation of the \rf{(conditional)} mean and covariance of the \mods{\dlr-\kbp} process.
%$\Xcal_t = U^0_t + \bU_t \bY_t^{\top}$}. %by \mods{applying \modt{\Ito's}~lemma}. 
%\begin{lemma} \label{lem:dlr-ricatti}
%	Let $P_t = \E[\Xcal_t^{\star} (\Xcal_t^{\star})^{\top} \st \Acal_{\Zcal_t}] = \E[(\bU_t \bY_t^{\top})  (\bY_t \bU_t^{\top}) \st \Acal_{\Zcal_t} ]$, and $\tilde{A} = A - P_t S$. Then $P_t$ satisfies
%        \begin{equation} \label{eqn:dlr-vkbp-cov-tmp}
%		\partial_t P_t = \tilde{A} P_t + P_t \tilde{A}^{\top} + \modt{\Pi_{\bU_t}} (\Sigma + P_t S P_t) \modt{\Pi_{\bU_t}}.
%        \end{equation}
%\end{lemma}

\begin{proposition} \label{prop:mean-cov-vkb}
        The mean $U_t^0$ and covariance $P_t$ of the \dlr-\kbp~satisfy
        \begin{align}
		\md U_t^0 &=  (A U_t^0 + \modt{f}) \rd t + P_t H^{\top} \Gamma^{-1} (\rd \Zcal_t - H U_t^0), \label{eqn:dlr-vkbp-mean} \\
		\partial_t P_t &= A P_t + P_t A^{\top} - P_t S P_t + \modt{\Pi_{\bU_t}} \Sigma \modt{\Pi_{\bU_t}}. \label{eqn:dlr-vkbp-cov}
        \end{align}
\end{proposition}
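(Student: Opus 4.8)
The plan is to treat the two claims separately: the mean is essentially immediate from the construction, while the genuine work lies in the covariance, which I would obtain by an It\^o-product-rule computation on the zero-mean part $\Xcal_t^\star$.

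For the mean, I would recall that for any solution $\Xcal_t = U_t^0 + \bU_t \bY_t^\top$ the lemma preceding this section gives $\E[\Xcal_t \st \Acal_{\Zcal_t}] = U_t^0$, since each $Y_t^i$ has zero conditional mean. As $U_t^0$ is by definition the solution of~\cref{eqn:kb-dlr-U0}, which is verbatim~\cref{eqn:dlr-vkbp-mean}, there is nothing further to check for the mean.

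For the covariance, I would start from the SDE~\cref{eqn:dXtstar} for $\Xcal_t^\star$, first rewriting it compactly as $\md \Xcal_t^\star = \tilde{A}\Xcal_t^\star \rd t + \Pi_{\bU_t}\Soh \rd \Wcal_t + P_t H^\top \Gmoh \rd \Vcal_t$, using $\tilde A = A - P_t S$ and $S = H^\top \Gamma^{-1}H$ to merge the two drift contributions. Applying the It\^o product rule to $\Xcal_t^\star (\Xcal_t^\star)^\top$ produces the drift $\tilde A \Xcal_t^\star(\Xcal_t^\star)^\top + \Xcal_t^\star(\Xcal_t^\star)^\top \tilde A^\top$, two martingale (It\^o-integral) contributions driven by $\Wcal_t$ and $\Vcal_t$, and a quadratic-variation term. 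Since $\Wcal_t$ and $\Vcal_t$ are independent standard Brownian motions, that last term equals $\bigl(\Pi_{\bU_t}\Soh(\Soh)^\top \Pi_{\bU_t} + P_t H^\top \Gmoh (\Gmoh)^\top H P_t\bigr)\rd t = \bigl(\Pi_{\bU_t}\Sigma\Pi_{\bU_t} + P_t S P_t\bigr)\rd t$, using $\Soh(\Soh)^\top = \Sigma$, symmetry of $P_t$, and $\Gmoh(\Gmoh)^\top = \Gamma^{-1}$. Taking $\E[\cdot \st \Acal_{\Zcal_t}]$ then leaves $P_t = \E[\Xcal_t^\star(\Xcal_t^\star)^\top \st \Acal_{\Zcal_t}]$ on the left, kills the martingale terms, and expanding $\tilde A P_t + P_t \tilde A^\top = A P_t + P_t A^\top - 2 P_t S P_t$ and recombining with the $+P_t S P_t$ from the quadratic variation gives exactly~\cref{eqn:dlr-vkbp-cov}. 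In effect this is the classical Kalman--Bucy Riccati derivation, the only change being that the model-noise diffusion is projected, yielding $\Pi_{\bU_t}\Sigma\Pi_{\bU_t}$ in place of $\Sigma$.

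The step demanding care is justifying that $\E[\cdot \st \Acal_{\Zcal_t}]$ interacts with the dynamics as claimed: that the $\Wcal_t$- and $\Vcal_t$-driven It\^o integrals vanish under conditioning, and that $\E[\Xcal_s^\star(\Xcal_s^\star)^\top \st \Acal_{\Zcal_t}]$ inside the time integral may be replaced by $P_s$. Both follow from the fact that in the \dlr-\kbp~the coefficients $\bU_t$, $M_{\bY_t}$, and hence $P_t$ and $\tilde A$, are deterministic (indeed $\bU_t$ solves the deterministic ODE~\cref{eqn:kb-dlr-U}, while $\bY_t$ is driven only by $\Wcal_t, \Vcal_t$, which are independent of $\Zcal_t$), so the conditional law of $\Xcal_t^\star$ coincides with its unconditional law and one may carry out the computation with ordinary expectations; the vanishing of the martingale terms is then the standard statement that It\^o integrals have zero expectation, for which I would appeal to the integrability hypotheses of~\cref{lem:cond-expec}. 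Alternatively one can differentiate $P_t = \bU_t M_{\bY_t}\bU_t^\top$ directly via~\cref{eqn:kb-dlr-U,eqn:kb-dlr-Y}, but the $\Xcal_t^\star$ route is shorter and avoids the Gram-matrix bookkeeping.
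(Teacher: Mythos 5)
Your proposal is correct and follows essentially the same route as the paper: the mean equation is read off directly from~\cref{eqn:kb-dlr-U0}, and the covariance equation is obtained by applying the It\^o product rule to $\Xcal_t^{\star}(\Xcal_t^{\star})^{\top}$ starting from~\cref{eqn:dXtstar}, noting that the $\Wcal_t$- and $\Vcal_t$-driven terms vanish under $\E[\,\cdot \st \Acal_{\Zcal_t}]$ and that the quadratic variation contributes $\Pi_{\bU_t}\Sigma\Pi_{\bU_t} + P_t S P_t$. Your extra remarks on why the conditioning argument is legitimate (determinism of $\bU_t$, $M_{\bY_t}$, hence of $P_t$ and $\tilde{A}$) only make explicit what the paper leaves implicit.
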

\begin{proof}
	\mods{For the conditional mean},~\cref{eqn:dlr-vkbp-mean} is the same as~\cref{eqn:kb-dlr-U0}. 
	\mods{For the covariance, applying \Ito's lemma to $\Xcal_t^{\star} (\Xcal_t^{\star})^{\top}$ yields
	\begin{align*}
		\md (\Xcal_t^{\star} (\Xcal_t^{\star})^{\top}) &= \md \Xcal_t^{\star} (\Xcal_t^{\star})^{\top} + \Xcal_t^{\star} (\md \Xcal_t^{\star})^{\top} + \md [\Xcal_t^{\star}, (\Xcal_t^{\star})^{\top}]_t 
		\\
							       &=
							       (A - P_t S) \Xcal_t^{\star} (\Xcal_t^{\star})^{\top} \rd t 
							       + \Xcal_t^{\star} (\Xcal_t^{\star})^{\top} (A - P_t S) \rd t
							       + \bJ(\md \Wcal_t) + \bK(\md \Vcal_t)
							       \\
							       &\hspace{2em} + \Pi_{\bU_t} \Sigma \Pi_{\bU_t} \md t + P_t S P_t \md t,
	\end{align*}
	where $\bJ(\md \Wcal_t) \in \R^{d \times d}$ resp. $\bK(\md \Vcal_t) \in \R^{d \times d}$ contain the terms depending on $\Wcal_t$ resp. $\Vcal_t$. 
	Those terms vanish when taking conditional expectations. 
	Furtheremore, since $P_t = \E[\Xcal_t^{\star} (\Xcal_t^{\star})^{\top} \st \Acal_{\Zcal_t}]$,
	taking conditional expectations and re-arranging yields~\cref{eqn:dlr-vkbp-cov}.
}

        %For the covariance, starting from~\cref{eqn:dlr-vkbp-cov-tmp},
        %\begin{align*}
        %        \partial_t P_t 
	%		&= \tilde{A} P_t + \tilde{A}^{\top} P_t + \modt{\Pi_{\bU_t}} (\Sigma + P_t S P_t) \modt{\Pi_{\bU_t}} 
        %        \\
	%		&= A P_t + P_t A^{\top} - 2 P_t S P_t + \modt{\Pi_{\bU_t}} \Sigma \modt{\Pi_{\bU_t}} + \modt{\Pi_{\bU_t}} P_t S P_t \modt{\Pi_{\bU_t}} 
	%		\\
	%		&= A P_t + P_t A^{\top} - P_t S P_t + \modt{\Pi_{\bU_t}} \Sigma \modt{\Pi_{\bU_t}},
        %\end{align*}
        %having used 
        %%$\tilde{C} \tilde{C}^{\top} = P_t H^{\top} \Gamma^{-1} H P_t$ and 
        %$\modt{\Pi_{\bU_t}} P_t \modt{\Pi_{\bU_t}} = P_t$. 
	%\begin{equation*} %\label{eqn:Sigma-range-U}
        %        P_U \Sigma P_U = \Sigma, \quad \forall t \geq 0.
        %\end{equation*} 
\end{proof}
\Cref{prop:mean-cov-vkb} characterises the mean and covariance of the process; even more can be said assuming the initial condition has a (low-rank) Gaussian distribution. 
\begin{proposition}
	Assume the initial condition of the \rf{\dlr-\kbp}\ $\Xcal_0 \sim \Ncal(m_0, P_0)$ where $P_0$ has rank $R$. 
	Then, the \dlr-\kbp\ \rf{solution $\Xcal_t$} is a (Gaussian) inhomogeneous Ornstein-Uhlenbeck process \modt{(conditional on $\Zcal_t$)}. 
	%hence a Gaussian process characterised by its mean and covariance. 
\end{proposition}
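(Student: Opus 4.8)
The plan is to exploit the fact that, once the deterministic pair $(\bU_t, P_t)$ has been identified, the zero-mean part $\Xcal_t^\star = \bU_t \bY_t^\top$ solves a \emph{linear} SDE with additive, state-independent noise, namely~\eqref{eqn:dXtstar}. A linear SDE of that type, started from a Gaussian initial condition, has a Gaussian law at every time and is by definition an inhomogeneous Ornstein--Uhlenbeck process; so the whole argument reduces to (i) verifying that all coefficient matrices in~\eqref{eqn:dXtstar} are deterministic functions of time, and (ii) transferring the Gaussianity from $\Xcal_t^\star$ to $\Xcal_t = U_t^0 + \Xcal_t^\star$ conditionally on $\Acal_{\Zcal_t}$.

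For step (i), I would first observe that $\bU_t$ solves the deterministic matrix ODE~\eqref{eqn:kb-dlr-U}, $\dot{\bU}_t = \Pi^\perp_{\bU_t} A \bU_t$, which carries no randomness; hence, given $\bU_0 \in \St(d,R)$, both $\bU_t$ and the projector $\Pi_{\bU_t}$ are deterministic for all $t$. Next, \Cref{prop:mean-cov-vkb} shows that $P_t$ satisfies the closed Riccati equation~\eqref{eqn:dlr-vkbp-cov}, whose right-hand side depends only on $A, S, \Sigma$ and on the (already deterministic) $\Pi_{\bU_t}$; thus $P_t$ is itself a deterministic function of time once $P_0$ is fixed. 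Consequently the drift $A - P_t S$ and the diffusion coefficients $\Pi_{\bU_t}\Soh$ and $P_t H^\top \Gmoh$ appearing in~\eqref{eqn:dXtstar} (and likewise in~\eqref{eqn:kb-dlr-Y}) are all deterministic and continuous in $t$.

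To set up the initial data compatibly with the low-rank ansatz, I would invoke the rank-$R$ hypothesis on $P_0$: writing the spectral decomposition $P_0 = \bU_0 M_0 \bU_0^\top$ with $\bU_0 \in \St(d,R)$ and $M_0 \in \Mat[+]{R}$, one realises $\Xcal_0 \sim \Ncal(m_0, P_0)$ as $U_0^0 + \bU_0 \bY_0^\top$ with $U_0^0 = m_0$ and $\bY_0$ a centred Gaussian $R$-vector of covariance $M_0$, so that $\Xcal_0^\star \sim \Ncal(0, P_0)$ is independent of $\Acal_{\Zcal_t}$. For step (ii), since the coefficients of~\eqref{eqn:dXtstar} are deterministic and the driving Brownian motions $\Wcal_t, \Vcal_t$ are independent of $\Zcal_t$, the solution $\Xcal_t^\star$ is a measurable functional of $(\Xcal_0^\star, \Wcal, \Vcal)$ alone; it is therefore independent of $\Acal_{\Zcal_t}$ and Gaussian, being the solution of a linear additive-noise SDE with Gaussian initial datum. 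Conditioning on $\Acal_{\Zcal_t}$ thus leaves the law of $\Xcal_t^\star$ unchanged, while $U_t^0$ is $\Acal_{\Zcal_t}$-measurable; hence $\Xcal_t = U_t^0 + \Xcal_t^\star$ is, conditionally on $\Acal_{\Zcal_t}$, a Gaussian random variable, and the process inherits the inhomogeneous OU structure of $\Xcal_t^\star$ up to the $\Acal_{\Zcal_t}$-measurable shift $U_t^0$, with conditional mean $U_t^0$ and conditional covariance $P_t$ as given by \Cref{prop:mean-cov-vkb}.

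The main obstacle I anticipate is the self-consistency inherent in the McKean--Vlasov nature of the system: $P_t$ enters the coefficients of~\eqref{eqn:dXtstar} and~\eqref{eqn:kb-dlr-Y} while simultaneously being the conditional covariance of the very process those equations define. The clean decoupling used above hinges on \Cref{prop:mean-cov-vkb} furnishing a \emph{closed} deterministic equation for $P_t$ that does not reference the a priori unknown conditional law, together with the standing well-posedness assumption guaranteeing that $(\bU_t, P_t)$ and the solution $\Xcal_t$ exist and are unique. Care is needed to ensure that the $P_t$ solving the Riccati equation indeed coincides with $\E[\Xcal_t^\star (\Xcal_t^\star)^\top \st \Acal_{\Zcal_t}]$, which is precisely the content of \Cref{prop:mean-cov-vkb} and is what closes the argument.
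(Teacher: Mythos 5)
Your proof is correct, and it takes a route that differs from the paper's in how the observation process is handled. The paper works with the full process $\Xcal_t = U_t^0 + \bU_t\bY_t^{\top}$: it expands $\rd\Zcal_t = H\Xcal_t^{\signal}\rd t + \Goh\rd\widetilde{\Vcal}_t$ using \cref{eqn:truth-noisy-observations}, so that (with the signal path fixed, per the conditional framing of \cref{sec:setup}) the whole process is exhibited as an inhomogeneous Ornstein--Uhlenbeck process driven by the three independent Brownian motions $(\widetilde{\Vcal}_t,\Wcal_t,\Vcal_t)$, with deterministic $\hat{A}_t,\hat{Q}_t$ and forcing $\hat{f}_t = f + P_t S\Xcal_t^{\signal}$; Gaussianity then follows from \cref{lem:in-ou-gauss}. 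You instead keep the mean--fluctuation split: the fluctuation $\Xcal_t^{\star}$ in \cref{eqn:dXtstar} is an Ornstein--Uhlenbeck process with deterministic coefficients driven only by $(\Wcal_t,\Vcal_t)$, hence Gaussian and independent of $\Acal_{\Zcal_t}$, and the conditional law of $\Xcal_t$ is recovered as $\Ncal(U_t^0,P_t)$ by adding the $\Acal_{\Zcal_t}$-measurable shift $U_t^0$. Both arguments rest on the same two pillars, namely that $\bU_t$ and $P_t$ are determined by deterministic equations (\cref{eqn:kb-dlr-U}, \cref{eqn:dlr-vkbp-cov}, with the standing well-posedness assumption resolving the McKean--Vlasov self-consistency you rightly flag), and that linear SDEs with deterministic coefficients and Gaussian initial data are Gaussian. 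What your version buys is a more explicit treatment of the conditioning: independence of $\Xcal_t^{\star}$ from the observations makes the claim of Gaussianity conditional on $\Zcal_t$ immediate, together with the identification of the conditional moments $(U_t^0,P_t)$, a point the paper leaves implicit. What the paper's version buys is that the full process, mean included, is literally written as an inhomogeneous Ornstein--Uhlenbeck process, which is the exact wording of the statement; in your write-up this last step is only asserted (``inherits the OU structure up to the shift''), though it follows at once from \cref{eqn:dlr-vkb} read conditionally on the observation path.
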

\begin{proof}
	\mods{Since $\Xcal_0 \sim \Ncal(m_0, P_0)$ with $P_0$ of rank $R$, the initial condition has a low-rank structure $U_0^0 + \bU_0 \bY_0^{\top}$}.	
	Then         
	\begin{multline*}
                \rd \left( U_t^0 + \bU_t \bY_t^{\top} \right) 
		= \tilde{A} \Xcal_t \rd t + {f} \md t + P_t H^{\top} \Gamma^{-1} \rd \Zcal_t + {\Pi_{\bU_t}} \Soh \rd \Wcal_t - P_t H^{\top} \Gmoh \rd \Vcal_t 
         \\
		= \left( \tilde{A} \Xcal_t + P_t S \Xcal_t^{\mathrm{signal}} + {f} \right) \md t 
	     	+ [P_t H^{\top} \Gmoh, {\Pi_{\bU_t}} \Soh, - P_t H^{\top} \Gmoh] [\rd \widetilde{\Vcal}_t, \rd \Wcal_t , \rd \Vcal_t]^{\top} 
	\\
		= \left(\hat{A}_t \Xcal_t + {\hat{f}}_t \right)\rd t + \hat{Q}_t \rd \widehat{\Wcal}_t. %\label{eqn:dXcal}
	\end{multline*}
%        \begin{align*}
%                \rd \left( U_t^0 + \bU_t \bY_t^{\top} \right) 
%	%	&= \rd U_t^0 + \rd \bU_t \bY^{\top} + \bU_t \rd \bY_t^{\top} + \rd [\bU_t, \bY_t]_t
%        % \\ 
%	%	&= A U_t^0 \rd t + b \md t + P_t H^{\top} \Gamma^{-1} (\rd \Zcal_t - H U^0_t \rd t)
%        % 		+  P^{\perp}_U \tilde{A} \bU_t \bY_t^{\top} \md t
%        % 		+ \modt{\Pi_{\bU_t}} \tilde{A} \Xcal_t^{\star} \md t 
%        % \\
%	%	& \quad + \modt{\Pi_{\bU_t}} \Soh \rd \Wcal_t - P_t H^{\top} \Gmoh \md \Vcal_t 
%        % \\
%		&= \tilde{A} \Xcal_t \rd t + \modt{f} \md t + P_t H^{\top} \Gamma^{-1} \rd \Zcal_t + \modt{\Pi_{\bU_t}} \Soh \rd \Wcal_t - P_t H^{\top} \Gmoh \rd \Vcal_t 
%         \\
%		&= \left( \tilde{A} \Xcal_t + P_t S \Xcal_t^{\mathrm{signal}} + \modt{f} \right) \md t 
%	     \\ &\quad+ [P_t H^{\top} \Gmoh, \modt{\Pi_{\bU_t}} \Soh, - P_t H^{\top} \Gmoh] [\rd \widetilde{\Vcal}_t, \rd \Wcal_t , \rd \Vcal_t]^{\top} 
%	\\
%		&= \left(\hat{A}_t \Xcal_t + \modt{\hat{f}}_t \right)\rd t + \hat{Q}_t \rd \widehat{\Wcal}_t. %\label{eqn:dXcal}
%        \end{align*}
	$P_t$ and $\bU_t$ are determined by deterministic equations independent of the process, therefore $\hat{Q}_t$ \mods{and} $\hat{A}_t$ are deterministic and $\mods{\hat{f}_t}$ is independent of the process. 
	The process is therefore an inhomogeneous Ornstein-Uhlenbeck process, \modr{hence} Gaussian (\mods{for completeness, a proof of that fact is given in}~\cref{lem:in-ou-gauss}). 
%The structure of the proof is exactly the same if $(U_t^0, \bU_t, \bY_t)$ solve~\cref{eqn:dkb-dlr-U0,eqn:dkb-dlr-U,eqn:dkb-dlr-Y}. 
\end{proof}

%\begin{remark}
%        Rearranging~\cref{eqn:dXcal}, since $\Xcal_t = U^0_t + \bU_t \bY^{\top}$, we obtain the relationship
%        \begin{equation} \label{eqn:consist-dX}
%                \rd \Xcal_t = A \Xcal_t \rd t + P_U \Soh \rd \Wcal_t + P_t H^{\top} \Gamma^{-1}(\rd \Zcal_t - (H \Xcal_t \rd t + \Goh \rd \Vcal_t)).
%        \end{equation}
%        This is not surprising, but does verify that the ``full order model'' respects the original dynamics. 
%\end{remark}

%The following two sections detail properties related to the conditional zero-mean part of the \dlr-\kbp~process, specifically the asymptotic properties of~\cref{eqn:kb-dlr-U} and a rank-reduction of~\cref{eqn:cov-lr}.

\subsubsection{Limit state of physical modes} \label{sssec:phys-modes}

\ra{The evolution of the subspace $\bU_t$ is described by \cref{eqn:kb-dlr-U} and its behaviour is tied to the spectrum of $A$. 
	Intuitively, $\bU_t$ is attracted to the most unstable directions of the dynamics, and that property can be leveraged to identify modes associated to Lyapunov exponents of $A$ \cite{ComputLyapReich2001}.  
	It is furthermore known (in the discrete-time setting) that the Ensemble Kalman covariance aligns with the unstable subspace \cite{DegenerateCarrassi2017}; by naturally restricting and steering the system towards the unstable modes, the \dlr-\kbp\ therefore displays a connection to the Assimilation in the Unstable Subspace literature, in which the Ensemble Kalman filter states are analysed only in the subspace generated by the most unstable vectors (see e.g \cite{LyapunovPalatella2013}). 
}

\ra{Concerning well-posedness \rf{of \eqref{eqn:kb-dlr-U}}, it} is immediate that $\bU_t$ is well-defined over $[0, +\infty)$ by local Lipschitzianity \rf{of the r.h.s of \eqref{eqn:kb-dlr-U}} and boundedness of $\bU_t$ for \rf{all $t \geq 0$.} 
\rf{Hereafter} we denote $\varphi_t(\bU_0)$ the solution flow to~\cref{eqn:kb-dlr-U} with initial condition $\bU_0$.
In general, $\bU_t$ might not converge \modt{to a limit state $\bU_{\infty}$ as $t \to \infty$} (as an example, consider any skew-symmetric matrix $A$ \mods{whose} dynamics \mods{reduces} to $\md \bU_t = A \bU_t \md t$ with solution $\bU_t = \exp(t A) \bU_0$ where $\exp(t A)$ is orthogonal for $t \geq 0$). 
\modt{If $A$ is symmetric,~\cref{eqn:kb-dlr-U} turns out to be an Oja flow, i.e. the gradient flow of a generalised Rayleigh quotient on the Stiefel manifold, \mods{and, provided that $\lambda_R(A) > \lambda_{R+1}(A)$, is} known to converge to \mods{the} linear subspace associated to the $R$ dominant eigenvalues of $A$~\cite{GlobalAnaWei1994}.
We present a \mods{slight} generalisation of that result, in that the matrix need only be symmetric to ensure convergence to \mods{a} dominant subspace \mods{for almost any initial condition.}}
\begin{proposition}
	Assume $A$ symmetric. 
	Then, the range of $\bU_t$ asymptotically align with a subspace spanned by $R$ distinct eigenvectors corresponding to the first $R$ dominant eigenvalues \mods{for almost any $\bU_0 \in \St(d, R)$ with respect to the Riemannian volume measure.}
\end{proposition}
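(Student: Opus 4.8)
The plan is to identify the flow as a Riemannian gradient ascent and then combine a convergence argument based on the {\L}ojasiewicz inequality with a stability analysis that excludes non-dominant limits for almost every starting point. I would work throughout on the Grassmannian rather than the Stiefel manifold, and only transfer the conclusion at the very end.

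First I would expose the variational structure. Using $A = A^{\top}$ together with $\bU_t \in \St(d,R)$ (so that $M_{\bU_t} = \bm{I}_R$ and $\Pi^{\perp}_{\bU_t} = \bm{I} - \bU_t \bU_t^{\top}$), a short computation shows that $\Pi^{\perp}_{\bU} A \bU$ is exactly the Riemannian gradient of the generalised Rayleigh quotient $f(\bU) = \tfrac{1}{2}\tr(\bU^{\top} A \bU)$ on $\St(d,R)$ equipped with the embedded Euclidean metric. Both $f$ and the vector field are invariant under the right action $\bU \mapsto \bU O$ for $O \in \St(R,R)$, since $\Pi^{\perp}_{\bU}$ depends only on $\mathrm{range}(\bU)$; hence the flow descends to the gradient flow of the induced function $\bar{f}(V) = \tfrac{1}{2}\tr(P_V A)$ on the compact Grassmannian $\mathrm{Gr}(R,d)$, where $P_V$ is the orthogonal projector onto $V = \mathrm{range}(\bU)$. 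Working on $\mathrm{Gr}(R,d)$ removes the gauge redundancy and is the natural setting in which to phrase a statement about $\mathrm{range}(\bU_t)$.

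Next I would establish convergence of every trajectory to a single equilibrium. Since $\bar{f}$ is the restriction of a polynomial, it is real-analytic on the compact real-analytic manifold $\mathrm{Gr}(R,d)$, so the {\L}ojasiewicz gradient inequality applies: each trajectory has finite length and converges to a critical point $V_{\infty}$. Equilibria satisfy $\Pi^{\perp}_{\bU} A \bU = 0$, i.e. $A V_{\infty} \subseteq V_{\infty}$; by symmetry of $A$ such $A$-invariant $R$-subspaces are exactly the spans of $R$ orthonormal eigenvectors, and $\bar{f}$ there equals half the sum of the associated eigenvalues. Consequently the global maxima of $\bar{f}$ are precisely the dominant subspaces, spanned by eigenvectors attached to the $R$ largest eigenvalues counted with multiplicity, which is the configuration asserted in the statement (and recovers the gap case of~\cite{GlobalAnaWei1994} when $\lambda_R > \lambda_{R+1}$).

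It remains to show that almost every initial condition is attracted to a maximum, and this is where I expect the main difficulty. At any critical point that is not a maximum there is a selected eigenvector with eigenvalue $\lambda_j$ and an unselected one with eigenvalue $\lambda_i > \lambda_j$; rotating the former towards the latter strictly increases $\bar{f}$, so the Hessian of $\bar{f}$ carries a strictly positive eigenvalue (with rate governed by $\lambda_i - \lambda_j > 0$) and the equilibrium is unstable. When the relevant eigenvalues are simple, $\bar{f}$ is a Morse function, the non-maximal critical points are hyperbolic saddles, and their stable manifolds are finitely many immersed submanifolds of positive codimension, hence Riemannian-null, so almost every trajectory converges to a maximum. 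The obstacle is the degenerate case, in which $\bar{f}$ is only Morse--Bott: repeated eigenvalues produce critical submanifolds (for instance, whole Grassmannians obtained by rotating inside a degenerate eigenspace), the equilibria are non-hyperbolic, and the plain stable-manifold theorem does not apply. The remedy is to invoke the center-stable manifold theorem along each critical submanifold, splitting its normal bundle into stable, unstable and center (tangent-to-the-critical-set) directions, and to check that every non-maximal critical submanifold has at least one positive normal eigenvalue; this keeps the transverse stable set of positive codimension, so the union of basins of non-maxima stays null. Lifting through the $\St(R,R)$-fibration $\St(d,R) \to \mathrm{Gr}(R,d)$ then transfers the almost-everywhere conclusion to the Riemannian volume on $\St(d,R)$, completing the argument.
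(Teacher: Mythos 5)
Your proposal is correct, and it follows the same skeleton as the paper's proof (gradient-flow identification, \L ojasiewicz/real-analyticity convergence, almost-sure avoidance of non-maximal critical points), but the execution is genuinely different. The paper works directly on $\St(d,R)$ and outsources the two delicate steps to cited results: convergence of the flow to a single limit is taken from a theorem on analytic gradient systems on compact analytic manifolds, the measure-zero statement for strict saddles from a lemma of Geshkovski et al., and the landscape fact that second-order critical points are global maxima from Boumal et al.'s benign-landscape analysis of the Brockett cost. You instead quotient by the right $O(R)$-action and run everything on the Grassmannian $\mathrm{Gr}(R,d)$; this is more than a cosmetic choice, since on the Stiefel manifold the $O(R)$-invariance means critical points are never isolated, so the Morse-theoretic stability analysis you carry out would not even make sense upstairs, whereas on the quotient the critical set decomposes into genuine Morse--Bott submanifolds (products of Grassmannians inside eigenspaces), with normal Hessian eigenvalues $\lambda_i - \lambda_j \neq 0$. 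Your explicit rotation argument correctly establishes the strict-saddle property at every non-dominant invariant subspace, the center-stable manifold argument (with a countable cover of the critical set and null preimages under the time-$t$ flow maps) is the standard complete way to conclude the null-basin claim in the degenerate case, and the final lift of the null set through the Riemannian submersion $\St(d,R) \to \mathrm{Gr}(R,d)$ is valid. What each approach buys: the paper's proof is shorter and leans on off-the-shelf results stated for the Stiefel manifold; yours is self-contained, phrases the dynamics in the space where the statement (about ranges) actually lives, and makes explicit exactly where the exceptional null set of initial conditions comes from, including in the case of repeated eigenvalues which the paper treats only implicitly through the cited landscape result.
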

\begin{proof}
Sort $A$'s eigenvalues in non-increasing order $\lambda_1 \geq \cdots \geq \lambda_d$.
Assuming $\bU_0~\in~\mathrm{St}(\rf{d},R)$, the dynamics of~\cref{eqn:kb-dlr-U} \mods{constitutes} a Riemannian gradient flow ascent on the Stiefel manifold, driven by the energy functional 
\begin{equation} \label{eqn:energy-stiefel}
	E(\rf{\bU}) = \frac{1}{2}\mathrm{tr}(\rf{\bU}^{\top} A \rf{\bU}).
\end{equation}
Note that~\cref{eqn:energy-stiefel} is a special case of the well-studied Brockett cost function,
%\todo{cite?}, 
with Riemannian gradient given by 
\begin{equation} \label{eqn:riem-grad-U}
	\mathrm{grad}E(\rf{\bU}) =  (I - \rf{\bU} \rf{\bU}^{\top}) A \rf{\bU}
\end{equation}
(see, e.g.,~\cite[Example 9.40]{IntroductionToBoumal2023}).
Convergence is then ensured by a standard argument in the gradient systems on manifolds literature, which we reproduce here for clarity, following the steps outlined in~\cite[Appendix A]{AMathematicalPerspGeshkovski2023}.

Firstly, the Stiefel manifold $\St(d,R)$ is a real-analytic, compact manifold, and the energy functional is real-analytic. 
By~\cite[Theorem 5.2]{OnTheRelaxHa2018}, those properties are sufficient to ensure that the dynamical system admits a limit $\lim_{t \rightarrow +\infty} \bU_t = \bU_{\infty} \in \St(d,R)$. 

Then,~\cite[Lemma A.1]{AMathematicalPerspGeshkovski2023} ensures that the set of initial conditions $\bU_0 \in \St(d,R)$ for which the gradient ascent converges to a strict saddle point of $\St(d,R)$ has \modt{(Riemannian volume)} measure zero, in other words the convergence to a global maximum for a given initialisation $\bU(0) = \bU_0 \in \St(d,R)$ holds true almost everywhere.

Finally, the landscape analysis provided in~\cite{BenignLandsBoumal2023} shows that the second-order critical points of~\cref{eqn:energy-stiefel} are global maxima, with value $\sum_{i=1}^R \lambda_i$.
As stated beforehand, this maximum is attained by any $R$ distinct eigenvectors corresponding to the $R$ dominant eigenvalues.
If \mods{an} eigenvalue \mods{among the $R$ dominant} is distinct, $\bU_t$ asymptotically recovers \mods{the corresponding} eigenvector, while if the multiplicity of the eigenvalue $m(\lambda) >1$, $\bU_t$ asymptotically recovers a rotation of the associated eigenvectors. 
	In the situation where $\lambda_i = \ldots = \lambda_j$ with $i \leq R < j$, $\bU_t$ recovers a rotation of the associated subspace, spanning a subspace of dimension $R + 1 - i$. 
\end{proof}

In the special case of distinct eigenvalues, it therefore holds that for almost any initialisation, $\bU_t$ asymptotically recovers the eigenvectors associated to the dominant $R$ eigenvalues of $A$.
Furthermore, owing to the local Lipschitzianity of the dynamical system, unless starting at one,
 the system can never reach a true steady state in finite time.

\subsubsection{Mean and covariance error bounds} \label{ssub:error-approx}

	%Other natural metrics of interest would be the mean square deviation conditional over all possible possible realisations of $(\Xcal_t^{\signal}, \Zcal_t)$ or conditional on $\Zcal_t$ (effectively removing the source of randomness in the mean equation, and hence the dynamics should be understood and analysed via rough path theory).
	%We do not pursue these issues further here.

As stated beforehand, the low-rank approximability of stochastic systems with \modt{linear-affine} drift is strongly tied to the diffusion term $\Soh$. 
The \dlr-\kbp~is expected to \ra{suitably approximate the \kbp\ provided that~\cref{eqn:mea-sigma} is sufficiently small; this is quantified in the proposition below, which establishes error bounds {on the first two moments of \dlr-\kbp. 
	As the \dlr-\kbp~mean is a stochastic process, the bound on the mean presented here is in a mean-square sense, that is the mean square deviation from the \kbp~mean averaged over all possible realisations of $\Zcal_t$ (but conditional on a realisation of the process $\Xcal_t^{\signal}$). 
}}
\begin{proposition} \label{prop:gr-bound}
	Let $(m_t^{\mathrm{DLR}}, P_t^{\mathrm{DLR}})$ resp. $(m_t^{\mods{\KBP}}, P_t^{\mods{\KBP}})$ \mods{be} the solutions \rf{to~\eqref{eqn:dlr-vkbp-mean}--\eqref{eqn:dlr-vkbp-cov}} resp. \rf{\eqref{eqn:kb-mean}--\eqref{eqn:kb-cov}}. 
	Then it holds
        \begin{align}
		\E \|m_T^{\DLR} - m_T^{\mods{\KBP}}\|^2 &\leq C_1(T) (\E \|m_0^{\DLR} - m_0^{\mods{\KBP}}\|^2 + \|P_0^{\mods{\KBP}} - P_0^{\DLR}\|^2_{\modr{F}}  + \ra{ \sup_{0 \leq s \leq T} \varepsilon_R^2(s)}),
        \\
        \|P_T^{\DLR} - P_T^{\mods{\KBP}}\|^2_F &\leq C_2(T) ( \|P_0^{\mathrm{DLR}} - P_0^{\mods{\KBP}}\|^2_F + \ra{ \sup_{0 \leq s \leq T} \varepsilon_R^2(s)}).
        \end{align}	
\end{proposition}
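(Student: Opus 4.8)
The plan is to establish the covariance bound first, since the covariance error feeds into the mean equation, and to obtain both estimates through Gronwall's inequality applied to a suitable energy. A crucial structural observation is that both Riccati equations~\eqref{eqn:kb-cov} and~\eqref{eqn:dlr-vkbp-cov} are deterministic and independent of $\Zcal_t$, so that $E_t := P_t^{\DLR} - P_t^{\KBP}$ is a deterministic, symmetric matrix-valued function of time. Throughout I assume, as granted by the well-posedness discussion, that $P_t^{\DLR}$ and $P_t^{\KBP}$ stay bounded in operator norm on $[0,T]$ by some $C_P(T)$.

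For the covariance bound, I would subtract the two Riccati equations and linearise the quadratic term via $P_t^{\DLR} S P_t^{\DLR} - P_t^{\KBP} S P_t^{\KBP} = P_t^{\DLR} S E_t + E_t S P_t^{\KBP}$, so that
\begin{equation*}
\partial_t E_t = A E_t + E_t A^\top - P_t^{\DLR} S E_t - E_t S P_t^{\KBP} + D_t, \qquad D_t := \Pi_{\bU_t}\Sigma\Pi_{\bU_t} - \Sigma.
\end{equation*}
The source $D_t$ is the only place where the approximation enters: writing $\Soh = \Pi_{\bU_t}\Soh + \Pi^\perp_{\bU_t}\Soh$ and expanding $\Sigma = \Soh\Soh$ shows that $D_t$ collects exactly the three terms carrying at least one factor $\Pi^\perp_{\bU_t}\Soh$, whence by~\eqref{eqn:mea-sigma}, submultiplicativity of the Frobenius and operator norms, and $\norm{\Pi_{\bU_t}\Soh}_{\mathrm{op}} \le \norm{\Soh}_{\mathrm{op}}$, one gets $\norm{D_t}_F \le 2\norm{\Soh}_{\mathrm{op}}\varepsilon + \varepsilon^2$. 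I would then differentiate $e(t) := \norm{E_t}^2_F = \tr(E_t^2)$, use $\tr(AE_t^2) = \tr(A_{\sym}E_t^2) \le \lambda_{\max}(A_{\sym})\norm{E_t}^2_F$ from~\eqref{eqn:tr-lbda-ineq} (since $E_t^2$ is symmetric positive semidefinite), bound the two $S$-terms by a multiple of $C_P(T)\lambda_{\max}(S)\norm{E_t}^2_F$, and control the forcing by Young's inequality $2\langle E_t,D_t\rangle_F \le \norm{E_t}^2_F + \norm{D_t}^2_F$. This gives $e'(t) \le C_3\, e(t) + C_4\varepsilon^2$, and Gronwall yields the claimed estimate with constant $C_2(T)$.

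For the mean bound, I would subtract~\eqref{eqn:dlr-vkbp-mean} and~\eqref{eqn:kb-mean}, set $\delta_t := m_t^{\DLR} - m_t^{\KBP}$, and after the analogous splitting $P_t^{\DLR}Sm_t^{\DLR} - P_t^{\KBP}Sm_t^{\KBP} = P_t^{\DLR}S\delta_t + E_t S m_t^{\KBP}$ obtain
\begin{equation*}
\rd\delta_t = (A - P_t^{\DLR}S)\delta_t\,\rd t - E_t S m_t^{\KBP}\,\rd t + E_t H^\top\Gamma^{-1}\rd\Zcal_t.
\end{equation*}
The key step is to expose the martingale by substituting the observation model $\rd\Zcal_t = H\Xcal_t^{\signal}\rd t + \Goh\rd\widetilde{\Vcal}_t$, which turns the last term into $E_t S\Xcal_t^{\signal}\rd t + E_t H^\top\Gmoh\rd\widetilde{\Vcal}_t$ and recombines the drift into $(A - P_t^{\DLR}S)\delta_t + E_t S(\Xcal_t^{\signal} - m_t^{\KBP})$. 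Applying Itô to $\norm{\delta_t}^2$ and taking expectations (the stochastic integral being a true martingale under the standing integrability), the quadratic-variation term equals $\norm{E_t H^\top\Gmoh}^2_F = \tr(SE_t^2) \le \lambda_{\max}(S)\norm{E_t}^2_F$. Bounding the drift by $2\norm{A - P_t^{\DLR}S}_{\mathrm{op}}\E\norm{\delta_t}^2$, the cross term by $\E\norm{\delta_t}^2 + \lambda_{\max}(S)^2\norm{E_t}^2_{\mathrm{op}}\E\norm{\Xcal_t^{\signal} - m_t^{\KBP}}^2$, and finally substituting the already-proved bound $\norm{E_t}^2_F \le C_2(T)(\norm{E_0}^2_F + \varepsilon^2)$, gives $\tfrac{\rd}{\rd t}\E\norm{\delta_t}^2 \le C_9\,\E\norm{\delta_t}^2 + C_{10}(\norm{E_0}^2_F + \varepsilon^2)$, and a final Gronwall produces the stated mean estimate.

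The main obstacle will be the boundedness facts needed to close the loop, rather than the Gronwall mechanics. First, the uniform constants $C_P(T)$ require that the Riccati solutions remain bounded on $[0,T]$; this is where the assumed well-posedness is used. Second, and most delicate, the cross term forces a uniform-in-$t$ bound on $\E\norm{\Xcal_t^{\signal} - m_t^{\KBP}}^2$ over $[0,T]$: since we work conditionally on a fixed continuous realisation of $\Xcal_t^{\signal}$ (hence bounded on $[0,T]$) while $m_t^{\KBP}$ solves a linear SDE driven by $\Zcal_t$ with finite second moments, this supremum is finite, and it is precisely this coupling that makes the initial covariance error $\norm{P_0^{\DLR} - P_0^{\KBP}}^2_F$ appear in the mean bound. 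Third, one should verify carefully the estimate on $\norm{D_t}_F = \norm{\Pi_{\bU_t}\Sigma\Pi_{\bU_t} - \Sigma}_F$, which is the only point where~\eqref{eqn:mea-sigma} enters and which transfers the scale $\varepsilon$ into both final bounds.
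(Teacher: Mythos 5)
Your proposal is correct and follows essentially the same route as the paper's proof: subtract the two Riccati equations, run an energy estimate in Frobenius norm with the perturbation $\Sigma - \Pi_{\bU_t}\Sigma\Pi_{\bU_t}$ controlled by~\eqref{eqn:mea-sigma}, then subtract the mean equations, expose the martingale by substituting $\rd\Zcal_t = H\Xcal_t^{\signal}\rd t + \Goh\rd\widetilde{\Vcal}_t$, apply It\^o to the squared norm, bound the quadratic variation by $\lambda_{\max}(S)\norm{\Delta P_t}_F^2$, and close both estimates with Gronwall, feeding the covariance bound into the mean bound. The only differences are cosmetic (sign conventions, which of $m^{\KBP}$ or $m^{\DLR}$ appears in the cross term, and the exact constant in the bound on $\norm{\Sigma - \Pi_{\bU_t}\Sigma\Pi_{\bU_t}}_F$).
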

\begin{proof}
        Taking the difference between~\cref{eqn:kb-cov} and~\cref{eqn:dlr-vkbp-cov}, denoting
        the symmetric matrix $\D P_t = P_t^{\mods{\KBP}} - P_t^{\mathrm{DLR}}$, yields
        \begin{equation*}
		\partial_t \D P_t = A \D P_t + \D P_t A^{\modt{\top}} - P_t^{\mathrm{DLR}} S \D P_t - \D P_t S P_t^{\mods{\KBP}} + \Sigma - \modt{\Pi_{\bU_t}} \Sigma \modt{\Pi_{\bU_t}}.
        \end{equation*}
	Defining $e(t) = \|\D P_t(t)\|_F$, it follows that
        \begin{align*}
                \frac{1}{2}\frac{\rd}{\rd t} e^2(t) 
		%= \langle \D P_t, \dot{\D P}\rangle_F 
		&=  \langle \D P_t, A \D P_t + \D P_t A^{\modt{\top}} - P_t^{\mathrm{DLR}} S \D P_t - \D P_t S P_t^{\mods{\KBP}} + \Sigma - \modt{\Pi_{\bU_t}} \Sigma \modt{\Pi_{\bU_t}} \rangle_F 
                \\
		&= 2 \tr(\D P_t^2 A) 
                - \tr(P^{\mathrm{DLR}}_t S \D P_t^2)
                - \tr(\D P_t^2 P^{\mods{\KBP}}_t S)
                + \tr(\D P_t \D \Sigma_t) 
                \\
		&\leq \left(2 \norm{A}_F + \norm{S}_F (\norm{P^{\mathrm{DLR}}_t}_{\modr{F}} + \norm{P^{\mods{\KBP}}_t}_F)\right) e^2(t) + e(t) \|\D \Sigma_t\|_F 
                \\ 
		&\leq \left(2 \|A\|_F + \|S\|_F (\|P_t^{\mods{\KBP}}\|_F + \|P_t^{\mathrm{DLR}}\|_F ) + \frac{1}{2} \right) e^2(t) +  \frac{1}{2}\|\D \Sigma_t\|^2_F, 
        \end{align*}
	where $\Delta \Sigma_t = \Sigma - \modt{\Pi_{\bU_t}} \Sigma \modt{\Pi_{\bU_t}}$, implying $\norm{\D \Sigma_t}^2_F = \norm{\modt{\Pi_{\bU_t}^{\perp}} \Soh \Soh \modt{\Pi_{\bU_t}}}^2_F + \norm{\Soh \Soh \modt{\Pi_{\bU_t}^{\perp}}}^2_F  \leq 2 \mods{\lambda_{\max}(\Sigma)} \ra{\varepsilon^2_R(t)}$
	and whence we can apply the Gronwall lemma~\cite{NumericalApproQuarte1994} to obtain \mods{the upper bound $\|\D P_T\|^2_{\mods{F}}  \leq  C_2(T) ( \|\D P_0\|^2_F + \ra{\sup_{0 \leq s \leq T} \varepsilon^2_R(s)})$, with}
        \begin{equation*} 
		\mods{C_2(T) = \max\{1, 2 T \mods{\lambda_{\max}}(\Sigma)\} \exp \left( T (4 \|A\|_F + 1) + 2\|S\|_F \int_0^T (\|P_s^{\mods{\KBP}}\|_F + \|P_s^{\mathrm{DLR}}\|_F) \rd s \right).}
        \end{equation*}
	Owing to the boundedness of $P^{\mods{\KBP}}_t$ and $P^{\DLR}_t$, the right hand side is bounded. 	
        Next, subtracting~\cref{eqn:dlr-vkbp-mean} from~\cref{eqn:kb-mean} yields the process
        \begin{multline*}
                \rd M_t = 
		\rd (m_t^{\mods{\KBP}} - m_t^{\DLR}) 
		= 
	\left( (A
		- P_t^{\mods{\KBP}} S ) M_t 
		+ \D P_t S \Xcal_t^{\mathrm{signal}}
		- \D P_t S m_t^{\DLR} 
	\right) \md t
		\\
		- \D P_t H^{\top} \Gmoh \md \tilde{\Vcal}_t.
        \end{multline*}
	Defining $e^2(t) = \|M_t\|^2 = \langle M_t, M_t \rangle$, using \Ito's formula and taking expectations 
        \begin{multline*}
		\E e^2(T) - \E e^2(0) 
			= \E\int_{0}^T {2} \langle M_t, \rd M_t \rangle + \sum_{i=1}^d \md [M_t(i),M_t(i)]_t 
		\\
			= \E \int_{0}^T 2 \langle M_t, (A - P^{{\KBP}}_t S) M_t \rangle \rd t 
			+ \E \int_{0}^T 2 \langle M_t, \D P_t S \Xcal_t^{\mathrm{signal}} \rangle  \rd t 
			- \E  \int_0^T 2 \langle M_t, \D P_t S m_t^{\DLR} \rangle \rd t 
		\\
			+ \int_0^T  \| \D P_t H^{\top} \Gmoh \|^2_F \rd t
                \\
			\leq 
			\int_0^T 2 (\|A \|_F  + \| P_t^{{\KBP}} S\|_F) \E e^2(t) \rd t
			+ \E \int_0^T e(t) \| \D P_t \|_F (\| S \Xcal^{\mathrm{signal}}_t \| + \|S\| \| m_t^{\DLR} \|) \rd t
			\\
			+ \int_0^T \| \D P_t \|_F^2 {\lambda_{\max}(S)} \rd t
		\\
			\leq \! \! \!
			\int_0^T \! \left[ 2 (\|A \|_F \! + \| P_t^{{\KBP}} S\|_F + \frac{1}{2}) \E e^2(t) \!
			+
			\| \D P_t \|_F^2 
			(\| S \|_F + \|S \Xcal_t^{\mathrm{signal}}\|^2 + \|S\|^2 \E \|m_t^{\DLR}\|^2)\!
	\right] \md t,
        \end{multline*}
%        \begin{align*}
%		\E e^2(T) - \E e^2(0) 
%			&= \E\int_{0}^T {2} \langle M_t, \rd M_t \rangle + \sum_{i=1}^d \md [M_t(i),M_t(i)]_t \\
%			&= \E \int_{0}^T 2 \langle M_t, (A - P^{{\KBP}}_t S) M_t \rangle \rd t 
%			\\
%			&\quad + \E \int_{0}^T 2 \langle M_t, \D P_t S \Xcal_t^{\mathrm{signal}} \rangle  \rd t 
%			 \\
%			& \quad - \E  \int_0^T 2 \langle M_t, \D P_t S m_t^{\DLR} \rangle \rd t 
%			\\
%			&\quad 
%			+ \int_0^T  \| \D P_t H^{\top} \Gmoh \|^2_F \rd t
%                \\
%			& \leq 
%			\int_0^T 2 (\|A \|_F  + \| P_t^{{\KBP}} S\|_F) \E e^2(t) \rd t
%                \\
%			&\quad + \E \int_0^T e(t) \| \D P_t \|_F (\| S \Xcal^{\mathrm{signal}}_t \| + \|S\| \| m_t^{\DLR} \|) \rd t
%                \\
%			&\quad + \int_0^T \| \D P_t \|_F^2 {\lambda_{\max}(S)} \rd t
%		\\
%			& \leq
%			\int_0^T 2 (\|A \|_F  + \| P_t^{{\KBP}} S\|_F + \frac{1}{2}) \E e^2(t) \rd t 
%			\\
%			& \quad + \int_0^T  \left( 
%			\| \D P_t \|_F^2 
%			(\| S \|_F + \|S \Xcal_t^{\mathrm{signal}}\|^2 + \|S\|^2 \E \|m_t^{\DLR}\|^2)
%			\right) \md t,
%        \end{align*}
	having used~\cref{eqn:tr-lbda-ineq} to bound 
	$\norm{\Delta P_t H^{\top} \Gmoh}_F^2 
	= \tr (\Delta P_t S \Delta P_t) 
	%\leq \lambda_{\max}(S) \tr(\Delta P_t^2) 
	\leq \mods{\lambda_{\max}(S)} \norm{\Delta P_t}^2_F$ in the first inequality, and Cauchy-Schwarz in the second.
        By the boundedness of all involved terms, we conclude again using a Gronwall inequality, 
	\begin{multline*}
		\E \|m_T^{\mods{\KBP}} - m_T^{\DLR}\|^2 \leq (\E \|m_0^{\mods{\KBP}} - m_0^{\DLR}\|^2 + \|\D P_0\|^2_{\mods{F}} +  \ra{\sup_{0 \leq s \leq T} \varepsilon^2_R(s)} ) \\
		\max\left\{1, \int_0^T \modt{C_2(t) } (\mods{\lambda_{\max}(S)} + \|S \Xcal_t^{\mathrm{signal}}\|^2 + \|S\|^2 \E \|m_t^{\DLR}\|^2) \md t \right\} \\
		\exp\left(\int_0^t (2 \|A\|_F + 2 \|P_s^{\mods{\KBP}}\| + 1) \md s \right).
	\end{multline*}
\end{proof}

 \subsection{Reduced Kalman-Bucy equations} \label{ssec:red-ricc}

%When used in applications, the \kbp~is typically tracked by computing the evolution of the mean and covariance, as those quantities are sufficient to fully characterise the Gaussian process.
%The covariance in particular might be challenging to update, as it requires evolving a (potentially high-dimensional) $d\times d$ matrix.
 \modt{In this section, we explore a fully deterministic alternative formulation \modr{of} the \dlr-\kbp~system by directly solving its mean and reduced covariance}. 

 In our setting, the covariance is low-rank by ansatz; this fact can be leveraged to further reduce~\cref{eqn:dlr-vkbp-cov} \mods{using the characterisation of $P_t$ in~\eqref{eqn:cov-lr}, and derive an evolution equation for} the Gram matrix $M_{\bY_t}$ -- again a Riccati equation, but with time-dependent factors. 
 \begin{lemma} \label{lem:gram-ex-bounds}
        The Gram matrix $M_{\bY_t}$ satisfies
        \begin{equation} \label{eqn:ricc-red}
                \dot{M}_{\bY_t} = A_{\bU_t} M_{\bY_t} + M_{\bY_t} A_{\bU_t}^{\top} 
                - M_{\bY_t} S_{\bU_t} M_{\bY_t} + \Sigma_{\bU_t},
        \end{equation}
        where $A_{\bU_t} = \bU_t^{\top} A \bU_t$, $S_{\bU_t} = \bU_t^{\top} S \bU_t $, $\Sigma_{\bU_t} = \bU_t^{\top} \Sigma \bU_{t}$. 
	For a given initial condition $M_{\bY_0}$, the equation admits a unique global solution.
	Furthermore, if $\lambda_{\max}(A + A^{\top}) < 0$, the solution is uniformly bounded in time. 
\end{lemma}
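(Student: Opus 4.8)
The plan is to establish the three claims in turn. For \cref{eqn:ricc-red}, I would start from the identity $M_{\bY_t} = \bU_t^{\top} P_t \bU_t$, which follows from the covariance characterisation $P_t = \bU_t M_{\bY_t} \bU_t^{\top}$ in \cref{eqn:cov-lr} together with $\bU_t^{\top}\bU_t = \bm{I}_R$ (recall $\bU_t \in \St(d,R)$). Differentiating gives $\dot M_{\bY_t} = \dot\bU_t^{\top}P_t\bU_t + \bU_t^{\top}\dot P_t\bU_t + \bU_t^{\top}P_t\dot\bU_t$, and the key simplification is that the first and third terms vanish: since $\dot\bU_t = \Pi^{\perp}_{\bU_t}A\bU_t$ by \cref{eqn:kb-dlr-U}, one has $\bU_t^{\top}\dot\bU_t = \bU_t^{\top}\Pi^{\perp}_{\bU_t}A\bU_t = 0$, so the third term is zero and the first is its transpose (using $P_t=P_t^{\top}$). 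Substituting \cref{eqn:dlr-vkbp-cov} into the surviving term $\bU_t^{\top}\dot P_t\bU_t$ and repeatedly using $\bU_t^{\top}\bU_t = \bm{I}_R$ and $\Pi_{\bU_t}\bU_t = \bU_t$ collapses each summand to the claimed form; in particular the diffusion contribution reduces via $\bU_t^{\top}\Pi_{\bU_t}\Sigma\Pi_{\bU_t}\bU_t = \bU_t^{\top}\Sigma\bU_t = \Sigma_{\bU_t}$, yielding exactly \cref{eqn:ricc-red}.

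For well-posedness I would first note that the right-hand side of \cref{eqn:ricc-red} is quadratic in $M_{\bY_t}$ (hence locally Lipschitz), while the coefficients $A_{\bU_t}, S_{\bU_t}, \Sigma_{\bU_t}$ are continuous in $t$ and uniformly bounded, since $t \mapsto \bU_t$ is continuous with values in the compact manifold $\St(d,R)$. Picard--Lindel\"of then yields a unique maximal solution, and it remains to exclude finite-time blow-up via an a priori bound. The relevant initial data are symmetric and positive semidefinite (as $M_{\bY_t}$ is a conditional Gram matrix), and both properties are preserved by the flow: symmetry is immediate, while $M_{\bY_t} \geq 0$ follows from invariance of the positive-semidefinite cone under Riccati flows with $S_{\bU_t},\Sigma_{\bU_t} \in \Mat[0]{R}$ (on the boundary, any $v \in \ker M_{\bY_t}$ gives $v^{\top}\dot M_{\bY_t}v = v^{\top}\Sigma_{\bU_t}v \geq 0$, so the field points inward). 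Taking the trace of \cref{eqn:ricc-red}, discarding the dissipative term $-\tr(M_{\bY_t}S_{\bU_t}M_{\bY_t}) \leq 0$, and applying \cref{eqn:tr-lbda-ineq} yields $\frac{\rd}{\rd t}\tr(M_{\bY_t}) \leq \lambda_{\max}(A_{\bU_t}+A_{\bU_t}^{\top})\tr(M_{\bY_t}) + \tr(\Sigma_{\bU_t})$; a Gronwall estimate then bounds $\tr(M_{\bY_t})$, and hence $M_{\bY_t}$ (being positive semidefinite), on every finite interval, precluding blow-up and giving the global solution.

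Finally, for uniform boundedness under $\lambda_{\max}(A+A^{\top}) < 0$, I would sharpen the trace estimate. By the Rayleigh-quotient (Cauchy interlacing) bound, $\lambda_{\max}(A_{\bU_t}+A_{\bU_t}^{\top}) = \lambda_{\max}(\bU_t^{\top}(A+A^{\top})\bU_t) \leq \lambda_{\max}(A+A^{\top}) =: -2\mu < 0$ uniformly in $t$, while $\tr(\Sigma_{\bU_t}) = \tr(\Sigma\Pi_{\bU_t}) \leq \tr(\Sigma)$. The differential inequality becomes the stable scalar bound $\frac{\rd}{\rd t}\tr(M_{\bY_t}) \leq -2\mu\,\tr(M_{\bY_t}) + \tr(\Sigma)$, which integrates to $\tr(M_{\bY_t}) \leq \max\{\tr(M_{\bY_0}),\, \tr(\Sigma)/(2\mu)\}$ for all $t \geq 0$; since $M_{\bY_t} \geq 0$, this uniform trace bound controls every matrix norm.

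The only genuinely delicate step is the a priori bound ruling out finite-time blow-up; everything else is linear algebra or standard ODE theory. Its crux is exploiting the sign structure of the Riccati vector field -- positive semidefiniteness of $M_{\bY_t}$ together with dissipativity of the quadratic term -- so that the quadratic ODE admits an affine-in-trace upper bound amenable to Gronwall. I would also flag that global existence genuinely requires $M_{\bY_0} \geq 0$: for indefinite initial data even the scalar case $\dot m = \sigma - s m^2$ exhibits finite escape, but this is immaterial here since $M_{\bY_0}$ is a covariance.
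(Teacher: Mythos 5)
Your proof is correct, but it takes a genuinely different route from the paper's. The paper's own proof does not re-derive \cref{eqn:ricc-red} at all (that derivation is only indicated in the text preceding the lemma, via the characterisation $P_t = \bU_t M_{\bY_t} \bU_t^{\top}$ of \cref{eqn:cov-lr} — essentially the computation you spell out, which is valid: the cross terms vanish because $\bU_t^{\top}\dot{\bU}_t = \bU_t^{\top}\Pi^{\perp}_{\bU_t}A\bU_t = 0$, and projecting \cref{eqn:dlr-vkbp-cov} gives the reduced Riccati equation). For the two remaining claims the paper leans on classical results of Bucy: global well-posedness is quoted directly from Bucy's existence lemma for matrix Riccati equations, and the uniform bound is obtained by first estimating the fundamental matrix $\Phi(t,s)$ of $A_{\bU_t}$ via Gronwall, $\norm{\Phi(t,s)}_F^2 \leq R\exp\left((t-s)\lambda_{\max}(A+A^{\top})\right)$, and then invoking Bucy's Loewner-order comparison $0 \leq \varphi_t(M_{\bY_0}) \leq \Phi(t,0)M_{\bY_0}\Phi(t,0)^{\top} + \int_0^t \Phi(t,s)\Sigma_{\bU_s}\Phi(t,s)\,\md s$, whose Frobenius norm is then bounded. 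You replace both citations with self-contained arguments: Picard--Lindel\"of plus forward invariance of the positive-semidefinite cone (your kernel/subtangent condition $v^{\top}\Sigma_{\bU_t}v \geq 0$ for $v \in \ker M_{\bY_t}$ is exactly the right Nagumo-type criterion, since the quadratic and linear terms annihilate the kernel) plus a trace-Gronwall bound to exclude blow-up, and then the scalar dissipative inequality $\frac{\md}{\md t}\tr(M_{\bY_t}) \leq -2\mu\,\tr(M_{\bY_t}) + \tr(\Sigma)$ for the uniform bound. What the paper's route buys is the stronger structural statement that the Riccati flow is dominated, in the Loewner order, by the associated linear (Lyapunov) flow; what your route buys is elementarity, independence from external lemmas, and the explicit uniform estimate $\tr(M_{\bY_t}) \leq \max\{\tr(M_{\bY_0}),\, \tr(\Sigma)/(2\mu)\}$ — at the modest price of having to justify cone invariance, which the paper gets for free from Bucy's comparison (whose hypotheses, as you correctly flag, also require $M_{\bY_0} \geq 0$).
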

\begin{proof}
	The global well-posedness follows from~\cite[Lemma 2]{GlobalTheBucy1967}.
	Consider $\Phi(\mods{t, t_0})$ the fundamental matrix of $A_{\bU_t}$, i.e. the solution to $\dot{\Phi}(\mods{t, t_0}) = A_{\bU_t} \Phi(\mods{t, t_0})$ with $\Phi(\mods{t_0, t_0}) = \bm{I}_{R \times R}$.	
	\modt{One has}
	\begin{multline*}
		\frac{\md}{\md t} \norm{\Phi(\mods{t, t_0})}^2_F = \frac{\md}{\md t} \tr(\Phi(\mods{t, t_0})^{\top} \Phi(\mods{t,t_0}) ) 
		%=
		%\tr(\Phi(t)^{\top} (A_{\bU_t} + A_{\bU_t}^{\top})\Phi(t) ) 
		\leq \lambda_{\max}(A_{\bU_t} + A_{\bU_t}^{\top}) \norm{\Phi(\mods{t,t_0})}^2_F 
		%\\
		\\
		\leq \lambda_{\max}(A + A^{\top}) \norm{\Phi(\mods{t,t_0})}^2_F,
	\end{multline*}
	\modt{and,} by Gronwall's inequality~\cite{NumericalApproQuarte1994} it holds $\norm{\Phi(t, t_0)}^2_F \leq \norm{\bm{I}_{R \times R}}^2_F \exp(\mods{(t - t_0)} \lambda_{\max}(A + A^{\top}))$.
	\mods{Therefore}, using~\cite[Lemma 1]{GlobalTheBucy1967}, the solution to~\cref{eqn:ricc-red} \mods{with $t_0 = 0$}, given by the flow $\varphi_t(M_{\bY_{\mods{0}}})$, verifies
	\begin{equation*}
		0 \leq \varphi_t(M_{\bY_{\mods{0}}}) \leq \Phi(\mods{t,0}) M_{\bY_{\mods{0}}} \Phi(\mods{t,0})^{\top} + \int_{{\mods{0}}}^t \Phi(t, s) \Sigma_{\bU_s} \Phi(t,s) \md s,
	\end{equation*}
	where the inequalities are understood in the Loewner partial order sense.
	\modt{If $\lambda_{\max}(A + A^{\top}) < 0$, this implies}
	\begin{multline*}
		\norm{\varphi_t(M_{\bY_{\mods{0}}})}_F \leq \norm{M_{\bY_{\mods{0}}}}_F \norm{\Phi(t,{\mods{0}})}^2_F + \int_{{\mods{0}}}^t \norm{\Sigma_{\bU_t}}_F \norm{\Phi(t,s)}^2_F \md s 
		\\
		\leq \norm{M_{\bY_{\mods{0}}}}_F \modt{R} \exp( \mods{t} \lambda_{\max}(A + A^{\top})) + \modt{R}\frac{1 - \exp( \mods{t} \lambda_{\max}(A + A^{\top}))}{|\lambda_{\max}(A + A^{\top})|} \modt{\norm{\Sigma}_F}
		\\
		\leq C(M_{\bY_{\mods{0}}},\Sigma, \lambda_{\max}(A + A^{\top}), \modt{R}),
	\end{multline*}
	which proves the second claim.
\end{proof}
%\begin{proof}
%\Cref{eqn:ricc-red} is immediately obtained by combining~\cref{eqn:cov-lr,eqn:dlr-vkbp-cov}.
%The right-hand-side is well-defined since $\bU_t$ is. 
%By~\cite[Theorem 2.1]{OnAMatrixWonham1968}, the solution of $M_{\bY_t}$ exists and is absolutely continuous on any interval $[0,T]$ almost everywhere, provided that all time-dependent matrices in the right-hand-side are Lebesgue measurable and bounded in norm.
%Therefore, $\dot{M}_{\bY_t} = \mathrm{rhs}(t, M_{\bY_t})$ almost everywhere (in fact, the derivative is also continuous since the right-hand side is continuous and they agree almost everywhere). 
%The continuity of $M_{\bY_t}$ over $[0,T]$ naturally implies the boundedness of the DLR covariance $P_t^{\mathrm{DLR}}$.
%\end{proof}

Combining~\cref{eqn:kb-dlr-U0,eqn:kb-dlr-U,eqn:ricc-red} therefore yields a set \mods{of \emph{deterministic}} equations, the solution of which fully \mods{characterises} the mean and covariance of \dlr-\kbp~(and consequently the entire process).
For clarity, we rewrite the \textbf{reduced Kalman-Bucy equations} here.
\begin{equation} 
\begin{aligned}
        \md U_t^0 &= A U_t^0 \rd t + P_t H^{\top} \Gamma^{-1} (\rd \Zcal_t - H U^0_t \rd t),  \\
	\md \bU_t &= \modt{\Pi^{\perp}_{\bU_t}} A \bU_t \md t, \\
	\dot{M}_{\bY_t} &= A_{\bU_t} M_{\bY_t} + M_{\bY_t} A_{\bU_t}^{\top} 
	- M_{\bY_t} S_{\bU_t} M_{\bY_t} + \Sigma_{\bU_t}.
\end{aligned}
\end{equation}

\modt{
This set of equations bears a connection to two distinct works.
Firstly, these equations coincide with those of the ``Low-Rank Kalman-Bucy Filter'' introduced in~\cite{OnANewLowRYamada2021} and further developed in~\cite{LowRankApproximatedTsuzukiB2024,ComparisonOfEstYamada2021,LowRankApproxiTsuzuki2024}, which were obtained via an \textit{ad-hoc} modification of the Riccati equation for LTI systems. 
Secondly, the Rank-Reduced Kalman Filter proposed in~\cite{TheRankRedSchmidt2023} uses matricial DLR to evolve the covariance of the LTI system -- in the context of continuous-time state/discrete observation setting, this is a Lyapunov equation $\partial_t P_t = A P_t + P_t A^{\top} + \Sigma$, with initial condition $P(0) = \bU_0 D_0 \bU_0^{\top}$ . 
Specifically, they use the popular Unconventional/BUG integrator~\cite{AnUnconventionCeruti2022} to do so; a substep of that method consists in evolving $D$ by projecting the equation onto the range of $\bU_t$, yielding the ``reduced Lyapunov'' equation
\begin{equation*}
	\dot{D}_t = A_{\bU_t} D_t + D_t  A_{\bU_t}^{\top} +  \Sigma_{\bU_t}.
\end{equation*}
In either case, the low-rank ansatz is applied at the level of the covariance of the LTI system; in contrast, our approach is based on the low-rank stochastic process ansatz, from which the reduced Kalman-Bucy are naturally derived. 
The Gaussian characterisation of the \dlr-\kbp~furthermore validates that the mean and reduced covariance are the exact evolution equations of the moments of the low-rank approximation process, rather than solely being low-rank approximations of the \kbp~moments.
Furthermore, our framework is naturally amenable to extensions such as an Ensemble-Kalman formulation, which is the object of~\cref{sec:dlr-enkf}.
}

\modr{
	We conclude this section by stating the \dlr-\kbp\ well-posedness result, having introduced all the notions and notations on which the proof relies.
	For the sake of conciseness, the proof is presented in Appendix~\ref{app:b}. 
\begin{lemma} \label{lem:dlr-kbp-wp}
	The \dlr-\kbp~\cref{eqn:kb-dlr-U0,eqn:kb-dlr-U,eqn:kb-dlr-Y}~admit a unique strong solution.
\end{lemma}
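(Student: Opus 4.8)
The plan is to exploit the hierarchical (triangular) structure of the system~\cref{eqn:kb-dlr-U0,eqn:kb-dlr-U,eqn:kb-dlr-Y}, which decouples the apparent McKean--Vlasov self-reference entering through the conditional covariance $P_t$. The key observation is that the law-dependent coefficient $P_t = \bU_t M_{\bY_t} \bU_t^{\top}$ is governed by a \emph{closed, deterministic} hierarchy: the physical modes $\bU_t$ obey an autonomous ODE, and the Gram matrix $M_{\bY_t}$ obeys the self-contained reduced Riccati equation~\cref{eqn:ricc-red}, neither of which references the pathwise solution. Once $\bU_t$ and $P_t$ are obtained as deterministic functions of time, the remaining equations for $U_t^0$ and $\bY_t$ become genuine (non-McKean--Vlasov) linear SDEs, to which classical existence and uniqueness theory applies. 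I would therefore solve the three levels in sequence and then close the loop by a consistency check.

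First I would solve the deterministic layer. The equation~\cref{eqn:kb-dlr-U} for $\bU_t$ is autonomous and, as already established in~\cref{sssec:phys-modes}, admits a unique global solution in $\St(d,R)$ via local Lipschitzianity together with the boundedness of $\bU_t$; in particular $t \mapsto \bU_t$ is continuous and bounded on $\R_{\geq 0}$. With $\bU_t$ in hand, the coefficients $A_{\bU_t}, S_{\bU_t}, \Sigma_{\bU_t}$ are continuous, and \cref{lem:gram-ex-bounds} yields a unique global solution $M_{\bY_t}$ to~\cref{eqn:ricc-red} for the given initial Gram matrix. Setting $P_t = \bU_t M_{\bY_t} \bU_t^{\top}$ then furnishes a continuous, locally bounded, deterministic matrix-valued function of time (uniformly bounded when $\lambda_{\max}(A + A^{\top}) < 0$).

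Next I would treat the stochastic layer on an arbitrary finite horizon $[0,T]$. Substituting the now-fixed $\bU_t$ and $P_t$ into~\cref{eqn:kb-dlr-U0} gives
\begin{equation*}
	\md U_t^0 = \left( (A - P_t S) U_t^0 + f \right) \rd t + P_t H^{\top} \Gamma^{-1} \rd \Zcal_t,
\end{equation*}
an affine SDE with additive noise, whose drift matrix $A - P_t S$ and additive coefficient $P_t H^{\top} \Gamma^{-1}$ are continuous and bounded on $[0,T]$; likewise~\cref{eqn:kb-dlr-Y} becomes an affine SDE for $\bY_t$ driven by $\Wcal_t, \Vcal_t$ with continuous, bounded, deterministic coefficients. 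Both are linear SDEs meeting the standard global Lipschitz and linear-growth conditions, so each admits a unique strong solution on $[0,T]$; patching over $T$ yields global strong solutions. This establishes existence of the triple $(U_t^0, \bU_t, \bY_t)$.

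Finally I would close the McKean--Vlasov loop and argue uniqueness. For existence it remains to verify that the pre-computed $P_t$ is indeed the conditional covariance of the constructed process, i.e. that $\E[Y_t^i Y_t^j \st \Acal_{\Zcal_t}] = (M_{\bY_t})_{ij}$: applying \Ito's formula to $\bY_t^{\top} \bY_t$ shows its conditional expectation solves~\cref{eqn:ricc-red} with the correct initial datum, so by the uniqueness in~\cref{lem:gram-ex-bounds} it coincides with $M_{\bY_t}$, whence~\cref{eqn:cov-lr} returns exactly the $P_t$ used as coefficient. For uniqueness, any strong solution of the coupled system necessarily has $\bU_t$ solving~\cref{eqn:kb-dlr-U} and covariance solving~\cref{eqn:dlr-vkbp-cov} by~\cref{prop:mean-cov-vkb}; by the uniqueness of these deterministic equations, its $\bU_t$ and $P_t$ must agree with those above, reducing it to the same pair of linear SDEs, whose strong solutions are unique. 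I expect the main obstacle to be precisely this decoupling argument: justifying rigorously that the self-referential $P_t$ may be replaced by the solution of the autonomous Riccati hierarchy, and carrying out the consistency computation that closes the loop. The subsequent SDE analysis is then routine linear theory.
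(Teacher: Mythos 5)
Your proposal is correct and takes essentially the same route as the paper: the paper's proof in Appendix~B constructs exactly your decoupled ("frozen-coefficient") auxiliary system, replacing $P_t$ by the precomputed deterministic flow $\varphi_t(P_0) = \bU_t \varphi_t(M_{\bY_0}) \bU_t^{\top}$, solves the resulting linear SDEs, and then closes the self-consistency loop via an \Ito~computation on $\tilde{\bY}_t^{\top}\tilde{\bY}_t$. The only imprecision in your write-up is the consistency step: the (conditional) second moment $N_{\bY_t} = \E[\bY_t^{\top}\bY_t]$ of the constructed process does not solve the Riccati equation~\cref{eqn:ricc-red} directly, but rather a \emph{linear} Lyapunov equation whose coefficients are frozen at $\varphi_t(M_{\bY_0})$; since $\varphi_t(M_{\bY_0})$ also solves that same linear equation with the same initial datum, equality follows from uniqueness of linear ODEs (this is the paper's error-term argument $E_t = N_{\bY_t} - \varphi_t(M_{\bY_0})$ satisfying a homogeneous Lyapunov equation), not from the nonlinear Riccati uniqueness of~\cref{lem:gram-ex-bounds}.
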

}

\section{DLR-EnKF formulation} \label{sec:dlr-enkf}

The Ensemble Kalman Filter (\enkf) is an extension of the \kbp, and consists in directly simulating~\cref{eqn:vkb} via a particle approximation, replacing the covariance by the sample covariance of the particles. 
This avoids the computation of the covariance by~\cref{eqn:kb-cov} (potentially intractable for high-dimensional systems), is easily extended to non-linear dynamics (and even non-linear observations), and \modt{often manages to track the signal} even with a modest number of particles~\cite{SequentialDataEvense1994,DataAssimilatiHoutek1998}.
In particular, it was \modt{proved} \mods{in~\cite{LongTimeStabiDeWil2018}} that, \modt{under certain assumptions,} an alternative formulation of the \enkf~can accurately track the true dynamics in the case of non-linear dynamics, full-state observations and small observation noise.
On the other hand, the method in its basic form \modt{(straightforward particle approximation of~\cref{eqn:vkb})} is known to display non-robust behaviour (instability induced by accumulating numerical errors) and the filter may be subject to catastrophic filter divergence~\cite{ConcreteEnsembKelly2015,CatastrHarlim2010}. 
In practice, the~\enkf~is often modified (via variance inflation, localisation~\cite{HighDimensionaSunH2024}, mollification~\cite{AMollifiedEnsBergem2010}, square root filters~\cite{ADetermSakov2008} etc) and/or combined with other methods~\cite{AHybridParticGrooms2021} to improve its stability.
\mods{Nevertheless, this work focuses on the basic version of the \enkf, as our primary objective is to establish a suitable \dlr-\enkf~extension. 
	The integration of the aforementioned stabilisation techniques into the \dlr-\enkf~framework is left for future research and is not further addressed here.
}

In this section, we \mods{aim} to develop a~\dlr-\enkf~formulation in the case of linear-affine dynamics. 
\mods{The motivation for a DLR formulation is twofold.}
Firstly, as noted in~\cite{OnNumericalPrLiJi2008}, when evolving the~\enkf~with few particles, the Monte-Carlo error might still constitute a non-negligible bias in the dynamics. 
Reducing \mods{it}~\textit{a priori} requires \mods{increasing the number of particles}, which may not be possible owing to computational budget constraints.
The \dlr-\enkf~naturally leverages the low-rank structure, allowing to evolve more \mods{particles} at a reduced cost -- thereby aiming to reduce the potential spurious correlations that arise from the sample covariance.
\mods{
	Secondly, the possibility to naturally extend the \dlr-\enkf~to general non-linear dynamics is of interest, as it allows to evolve a low-rank empirical measure in settings where computing the mean and covariance via~\cref{eqn:kb-mean,eqn:kb-cov} is not valid.
	While distinct from the true filtering measure, the empirical measure might still display suitable signal-tracking abilities at reduced computational cost, making it an interesting tool for applications such as filtering of large-scale non-linear systems, parameter identification, etc. 
	It is moreover important to understand the (asymptotic) properties of \dlr-\enkf, and as a first step in that direction, we show a propagation of chaos result in the case of linear-affine dynamics \mods{and full observations}. 
}
%Finally, as~\cref{eqn:vkb} is a McKean-Vlasov equation, showing convergence of the discretisation is not straighforward; the \dlr-\enkf, which is an interacting particle system, can be understood as a ``semi-discretisation'' and aid in analysing the fully discretised system.  

In what follows, $\widehat{\Xcal}_t = \left[ \widehat{\Xcal}_t^{(1)}, \ldots, \widehat{\Xcal}_t^{(\rb{N})} \right] $ denotes an ensemble of $\rb{N}$ particles. 
Furthermore, we let
\begin{equation*}
	\E_{\rb{N}}[\widehat{\Xcal}_t] = \frac{1}{{\rb{N}}} \sum_{i=1}^{\rb{N}} \widehat{\Xcal}_t^{(i)}, 
	\quad
	\widehat{\Xcal}_t^{\star} = \widehat{\Xcal}_t - \E_{\rb{N}}[\widehat{\Xcal}_t],
	\quad
	\widehat{P}_t = \frac{{\rb{N}}}{{\rb{N}}-1} \E_{\rb{N}} [\widehat{\Xcal}^{\star}_t (\widehat{\Xcal}^{\star}_t)^{\top}]
\end{equation*}
\mods{denote} the \mods{sample} mean, \mods{sample} zero-mean and \mods{sample} covariance\mods{,} respectively.
Consider also $\mods{\hWcal_t  = \modr{(\hWcal_t^{(1)}, \ldots, \hWcal_t^{({\rb{N}})})}}$ (resp. $\hVcal_t$) \mods{the collection of} ${\rb{N}}$ independent copies of $\Wcal_t$ (resp. $\Vcal_t$).
The \enkf~consists in evolving
\begin{equation} 
	\label{eqn:enkf}
	\rd \widehat{\Xcal}_t^{(p)} = (A \Xcal_t^{(p)} + \modt{f}) \rd t 
	+ \Soh \rd \hWcal_t^{(p)} 
	+ \widehat{P}_t H^{\top} \Gamma^{-1} ( \rd \Zcal_t - H \widehat{\Xcal}_t^{(p)} \rd t - \Goh \rd \widehat{\Vcal}_t^{(p)}),
\end{equation} 
for $p=1, \ldots,{\rb{N}}$. 
\mods{Notice that the evolution of the particles is coupled due to the presence of the sample covariance $\hP_t$.}

\subsection{DLR-EnKF equations}

\mods{To derive} the \dlr-\enkf~system \mods{as a particle approximation of the \dlr-\kbp~\eqref{eqn:kb-dlr-U0}--\eqref{eqn:kb-dlr-Y} and~\eqref{eqn:dlr-vkb}}, we replace $\bY_t = \left[Y^1_t, \ldots, Y^R_t\right]$ by $\hbY_t =  \left[\widehat{Y}^1_t, \ldots, \widehat{Y}^R_t \right]$ where $\widehat{Y}^i_t = [\widehat{Y}^{i,(1)}_t, \ldots,\widehat{Y}^{i,({\rb{N}})}_t]^{\top}$.
Each particle $\hbY_t^{(p)} = \left[\widehat{Y}^{1,(p)}_t, \ldots, \widehat{Y}^{R,(p)}_t \right]$ thus has dimension $R$. 
We also enforce $\E_{\rb{N}}[\hbY_t^i] = 0$; then, \mods{the solution of the \dlr-\enkf~system will have the form} $\hXcal_t = \modt{\widehat{U}}_t^0 + \bU_t \hbY^{\top}_t$ where $\modt{\widehat{U}}_t^0$ is the sample mean, \modt{and} \mods{the sample covariance reads} $\widehat{P}_t = \frac{1}{{\rb{N}}-1} \bU_t \hbY_t^{\top} \hbY_t \bU_t^{\top}$.
\modt{ A relevant quantity in this section will be the ``sample reduced covariance'' $\hP_{\hbY_t} = \frac{1}{{\rb{N}}-1}\hbY_t^{\top} \hbY_t$.}

To set the initial condition \mods{$X_0 = U^0_0 + \bU_0 \bY_0^{\top}$ with $Y_0 \sim \Ncal(0, M_{\bY_0})$, we draw ${\rb{N}}$ i.i.d particles $\widehat{\bZ}_0^{(p)} \sim \Ncal(0, M_{\bY_0})$, $p = 1, \ldots {\rb{N}}$ and set $\widehat{U}^0_0 = U^0_0 + \bU_0 \E_{\rb{N}}[\widehat{\bZ}_0]$ and $\hbY_0 = \widehat{\bZ}_0 - \E_{\rb{N}}[\widehat{\bZ}_0]$ so that $\hXcal_0 = \widehat{U}_0 + \bU_0 \hbY_0$ has zero sample mean modes.}

We propose the following \dlr-\enkf~system:
\begin{align}
	\rd \modt{\widehat{U}}_t^0 &=  (\widehat{\tilde{A}} \modt{\widehat{U}}_t^0 + \modt{f}) \rd t 
			+ \widehat{P}_t H^{\top} \Gamma^{-1} \rd \Zcal_t 
			+ \modt{\Pi_{\bU_t}} \Soh \rd \E_{\rb{N}} [\hWcal_t] 
			\modt{-} \widehat{P}_t H^{\top} \Gmoh \rd \E_{\rb{N}} [\widehat{\Vcal}_t], \label{eqn:mean-mode-particles} \\
	\rd \bU_t &= \modt{\Pi^{\perp}_{\bU_t}} A \bU_t \rd t, \label{eqn:phys-modes-particles}\\
	\rd \hbY_t^{\top} &= \bU_t^{\top} \widehat{\tilde{A}} \bU_t \hbY_t^{\top} \rd t + 
	\bU_t^{\top} \Soh \rd \hWcal_t^{\star} - \bU_t^{\top} \widehat{P}_t H^{\top} \Gmoh \rd \widehat{\Vcal}^{\star}_t, \label{eqn:stoch-modes-particles}
\end{align}
where $\widehat{\tilde{A}} = A - \widehat{P}_t S$.
The terms in~\cref{eqn:stoch-modes-particles} are understood in a $p$-component-wise sense, i.e., for fixed $p = 1, \ldots, {\rb{N}}$, 
\begin{equation*}
	\rd \hbY_t^{(p)} = 
	\bU_t^{\top} \widehat{\tilde{A}} \bU_t \hbY_t^{(p)} \rd t + 
		\bU_t^{\top} \Soh \rd \hWcal_t^{\star,(p)}
		+ \bU_t^{\top} \widehat{P}_t H^{\top} \Gmoh \rd \widehat{\Vcal}_t^{\star,(p)}. \label{eqn:stoch-modes-particles-detail}
\end{equation*}
To ensure that the stochastic modes $\hbY_t$ have zero sample mean at all times, in~\cref{eqn:stoch-modes-particles} we have subtracted the sample mean of $\hWcal_t$ and $\hVcal_t$, which then has to be included in the dynamics of $\widehat{U}^0_t$ in~\cref{eqn:mean-mode-particles}. 
The following equation for the particles $\hXcal_t^{(p)}$, $p = 1, \ldots {\rb{N}}$ can be easily recovered from~\cref{eqn:mean-mode-particles,eqn:phys-modes-particles,eqn:stoch-modes-particles}.
\begin{equation} 
	\rd \widehat{\Xcal}_t^{(p)} = (A \widehat{\Xcal}_t^{(p)} + f) \rd t 
	+ \modt{\Pi_{\bU_t}} \Soh \rd \hWcal_t^{(p)} 
	+ \widehat{P}_t H^{\top} \Gamma^{-1} ( \rd \Zcal_t - H \widehat{\Xcal}_t^{(p)} \rd t - \Goh \rd \widehat{\Vcal}_t^{(p)}). \label{eq:dXhat}
\end{equation}
It represents a particle version of~\cref{eqn:dlr-vkb}.
\modq{Our results will rely on the following characterisation of the sample mean and sample (reduced) covariance.
	\begin{lemma} \label{lem:red-mean-cov}
	The sample mean and sample reduced covariance satisfy the mean and reduced Riccati equations up to a stochastic perturbation,  
	\begin{align} 
		\rd \modt{\widehat{U}}_t^0  &= (A \modt{\widehat{U}}_t^0  + \modt{f})\rd t + \widehat{P}_t H^{\top} \Gamma^{-1} (\rd \Zcal_t - H \modt{\widehat{U}}_t^0 \rd t)  +  \frac{1}{\sqrt{{\rb{N}}}} \rd \Mcal_t, \label{eqn:d-sample-mean} \\
		\md \hP_{\hbY_t} &= \left( A_{\bU_t} \hP_{\hbY_t} + \hP_{\hbY_t} A_{\bU_t}^{\top} +  \Sigma_{\bU_t} - \hP_{\hbY_t} S_{\bU_t} \hP_{\hbY_t} \right)\md t + \frac{1}{\sqrt{{\rb{N}}-1}} \md \Ncal_t, \label{eqn:d-sample-Gram}
	\end{align}
	where $\Mcal_t$ is the martingale
	\begin{equation*} %\label{eqn:sample-mean-martingale}
		\rd \Mcal_t =  \frac{1}{\sqrt{{\rb{N}}}} \left( \modt{\Pi_{\bU_t}} \Soh \left( \sum_{p=1}^{\rb{N}} \md \hWcal^{(p)}_t \right)
			+ \widehat{P}_t H^{\top} \Gmoh \left( \sum_{i=1}^{\rb{N}} \md  \widehat{\Vcal}^{(p)}_t \right) \right).
	\end{equation*}
	and where the martingale $\Ncal_t$ is specified in the proof. 
\end{lemma}
\begin{proof}
	The statement for the sample mean is immediate from~\cref{eqn:mean-mode-particles}. 

	Noting that, for suitable indices $m,m'$,
	\begin{align*}
		\modt{\frac{\md}{\md t}} \left[ \hWcal_t^{\star,(p)}(m), \hWcal^{\star,(p')}_t(m') \right]_t 
		= \modt{ \left(\delta_{p,p'}  - \frac{1}{{\rb{N}}} \right) \delta_{mm'} }
		= \modt{\frac{\md}{\md t}} \left[ \hVcal_t^{\star,(p)}(m), \hVcal^{\star,(p')}_t(m') \right]_t 
	\end{align*}
	we develop 
	\begin{multline*} %\label{eqn:enkf-cov-sub2}
		\rd (\hbY_t^{\top} \hbY_t)_{i,j} 
		= \sum_{p=1}^{\rb{N}} \rd \left( \hY_t^{i,(p)} \hY_t^{j,(p)}\right) 
		= \sum_{p=1}^{\rb{N}} \rd\hY_t^{i,(p)} \hY_t^{j,(p)} + \hY_t^{i,(p)} \rd \hY_t^{j,(p)}  + \rd \left[ \hY_t^{i,(p)}, \hY_t^{j,(p)} \right]_t
		\\
		= \sum_{p=1}^{{\rb{N}}}  \bigg(
		(U_t^i)^{\top} \widehat{\tilde{A}} \bU_t \hbY_t^{(p)} \hY_t^{j,(p)} \rd t + 
		(U_t^i)^{\top} \Soh \hY_t^{j,(p)} \rd \hWcal_t^{\star,(p)} 
		+ (U_t^{i})^{\top} \hY_t^{j,(p)} \widehat{P}_t H^{\top} \Gmoh \rd \hVcal_t^{\star,(p)} 
		\\
		+ ( U_t^{j})^{\top} \widehat{\tilde{A}} \bU_t \hbY_t^{(p)} \hY_t^{i,(p)} \rd t 
		+ (U_t^{j})^{\top} \Soh \hY_t^{i,(p)} \rd \hWcal_t^{\star,(p)} 
		+ ( U_t^{j})^{\top} \hY_t^{i,(p)} \widehat{P}_t H^{\top} \Gmoh \rd \hVcal_t^{\star,(p)} \bigg)
		\\
		+ ({\rb{N}}-1)(U_t^{i})^{\top}  \Sigma U_t^{j} \rd t
		+ ({\rb{N}}-1)(U_t^{i})^{\top} \widehat{P}_t S \widehat{P}_t U_t^{j} \rd t. 
	\end{multline*}
%	\begin{align*} %\label{eqn:enkf-cov-sub2}
%		\rd (\hbY_t^{\top} \hbY_t)_{i,j} 
%		&= \sum_{p=1}^{\rb{N}} \rd \left( \hY_t^{i,(p)} \hY_t^{j,(p)}\right)  \\
%		&= \sum_{p=1}^{\rb{N}} \rd\hY_t^{i,(p)} \hY_t^{j,(p)} + \hY_t^{i,(p)} \rd \hY_t^{j,(p)}  + \rd \left[ \hY_t^{i,(p)}, \hY_t^{j,(p)} \right]_t
%		\\
%		&= \sum_{p=1}^{{\rb{N}}}  \bigg(
%		(U_t^i)^{\top} \widehat{\tilde{A}} \bU_t \hbY_t^{(p)} \hY_t^{j,(p)} \rd t + 
%		(U_t^i)^{\top} \Soh \hY_t^{j,(p)} \rd \hWcal_t^{\star,(p)} 
%		\\ 
%		& \quad + (U_t^{i})^{\top} \hY_t^{j,(p)} \widehat{P}_t H^{\top} \Gmoh \rd \hVcal_t^{\star,(p)} 
%		\\
%		&\quad + ( U_t^{j})^{\top} \widehat{\tilde{A}} \bU_t \hbY_t^{(p)} \hY_t^{i,(p)} \rd t 
%		+ (U_t^{j})^{\top} \Soh \hY_t^{i,(p)} \rd \hWcal_t^{\star,(p)} 
%		\\
%		&\quad
%		+ ( U_t^{j})^{\top} \hY_t^{i,(p)} \widehat{P}_t H^{\top} \Gmoh \rd \hVcal_t^{\star,(p)} \bigg)
%		\\
%		&\quad + ({\rb{N}}-1)(U_t^{i})^{\top}  \Sigma U_t^{j} \rd t
%		\\
%		&\quad + ({\rb{N}}-1)(U_t^{i})^{\top} \widehat{P}_t S \widehat{P}_t U_t^{j} \rd t. 
%	\end{align*}
	Hence, dividing by $({\rb{N}}-1)$ 
	\begin{align*}
		\md \hP_{\hbY_t} &= \left( \widehat{\tilde{A}}_{\bU_t} \hP_{\hbY_t} + \hP_{\hbY_t} \widehat{\tilde{A}}_{\bU_t}^{\top} + \Sigma_{\bU_t} + \hP_{\hbY_t} S_{\bU_t} \hP_{\hbY_t} \right)\md t + \frac{1}{\sqrt{{\rb{N}}-1}} \md \Ncal_t \\
				 &= \left( A_{\bU_t} \hP_{\hbY_t} + \hP_{\hbY_t} A_{\bU_t}^{\top} +  \Sigma_{\bU_t} - \hP_{\hbY_t} S_{\bU_t} \hP_{\hbY_t} \right)\md t + \frac{1}{\sqrt{{\rb{N}}-1}} \md \Ncal_t,
	\end{align*}
	where the $\Mat{R \times R}$-valued, symmetric martingale verifies 
	\begin{multline} \label{eqn:sample-gram-martingale}
		\md (\Ncal_t)_{ij} = \frac{1}{\sqrt{{\rb{N}}-1}}\sum_{p=1}^{\rb{N}} (U_t^i)^{\top} \Soh \hY_t^{j,(p)} \rd \hWcal_t^{\star,(p)} 
					+ (U_t^{j})^{\top} \Soh \hY_t^{i,(p)} \rd \hWcal_t^{\star,(p)} 
					\\ 
					+ (U_t^{i})^{\top} \hY_t^{j,(p)} \widehat{P}_t H^{\top} \Gmoh \rd \hVcal_t^{\star,(p)} 
					+ ( U_t^{j})^{\top} \hY_t^{i,(p)} \widehat{P}_t H^{\top} \Gmoh \rd \hVcal_t^{\star,(p)}.
	\end{multline}
\end{proof}

In our analysis, it will be necessary to bound the fluctuations of the sample mean and sample covariance characterised in~\cref{lem:red-mean-cov}. 
We therefore conclude this section by stating the following lemma, taken from~\cite{OnTheStabilitDelMo2018}, which will be useful in what follows.
\begin{lemma}[\cite{OnTheStabilitDelMo2018}, Lemma 7.2] \label{lem:fost-lyap}
	Let $\modt{Q_t}$ be a \modt{$\Acal_t$}-adapted stochastic process on a measurable state space $(E, \Ecal)$, and let $\modt{G}$ \modt{be} a non-negative measurable function on $(E, \Ecal)$ such that
	\begin{equation*}
		\md \modt{G}(\modt{Q_t}) = \Lcal_t \modt{G}(\modt{Q_t}) \md t + \md \Mcal_t
	\end{equation*}
	where $\Mcal_t$ \modt{is} an \mods{$\Acal_t$-martingale} and $\Lcal_t \modt{G}(\modt{Q_t})$ is $\modt{\Acal_t}$-adapted.
	\begin{itemize}
		\item[(a)] Assume that 
			\begin{align*}
				\Lcal_t \modt{G} (\modt{Q_t}) &\leq 2 \gamma \sqrt{\modt{G}(\modt{Q_t})} + 3 \alpha \modt{G}(\modt{Q_t}) - \beta \modt{G} (\modt{Q_t})^2 + r,
				\\
				\md [\Mcal_t, \Mcal_t]_t &\leq \modt{G}(\modt{Q_t}) (\tau_0 + \tau_1 \modt{G}(\modt{Q_t}) + \tau_2 \modt{G}(\modt{Q_t})^2) \md t,
			\end{align*}
			for some parameters $\alpha < 0$, and $\gamma, \beta, r, \tau_0, \tau_1, \tau_2 \geq 0$.
			In this situation, we have the uniform moment estimates 
			\begin{equation*}
				1 \leq n < 1 + 2 \min\{\nicefrac{\beta}{\tau_2}, \nicefrac{|\alpha|}{\tau_1}\} \implies \sup_{t \geq 0 } \E[\modt{G}(\modt{Q_t})^n] < \infty
			\end{equation*}
			with the convention $\nicefrac{\beta}{0} = \infty = |\alpha|/0$ when $\tau_2 = 0$ or when $\tau_1 = 0$.
		\item[(b)] Assume that 
			\begin{align*}
				\Lcal_t \modt{G} (\modt{Q_t}) &\leq 2 \tau_t (\modt{Q_t}) \sqrt{\modt{G}(\modt{Q_t})} + 2 \alpha \modt{G} (\modt{Q_t}) + \beta_t (\modt{Q_t}), 
				\\
				\md [\Mcal_t, \Mcal_t]_t &\leq \modt{G}(\modt{Q_t}) \gamma_t (\modt{Q_t}) \md t,
			\end{align*}
			for some $\alpha < 0$ and some nonnegative functions $\tau_t, \beta_t, \gamma_t$ such that, \modt{for a given $n \geq 1$,}
			\begin{align*}
				\delta_{\tau,t}(n) &= \E[ \tau_t(\modt{Q_t})^n ]^{\frac{1}{n}} < \infty &
				\delta_{\beta,t} &= \E [\beta_t(\modt{Q_t})^n]^{\frac{1}{n}} < \infty &
				\delta_{\gamma,t} &= \E [\gamma_t(\modt{Q_t})^n]^{\frac{1}{n}} < \infty.
			\end{align*}
			Then, \modt{for that $n$,} it holds	
			\begin{multline*}
				\E[\modt{G}(\modt{Q_t})^n]^{\frac{1}{n}} \leq e^{\alpha t} \E[\modt{G}(\modt{Q_t})^n]^{\frac{1}{n}}
				+ \int_{0}^t e^{\alpha (t-s)} \left[ \frac{\delta_{\tau,s}(2n)^2}{|\alpha|} + \delta_{\beta,s}(n) + \frac{n-1}{2} \delta_{\gamma,s} \right] \md s.
			\end{multline*}
	\end{itemize}
\end{lemma}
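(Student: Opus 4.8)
The plan is to apply It\^o's formula to $G(Q_t)^n$ and reduce each case to a scalar differential inequality, treating the martingale term by localisation. Write $g_t = G(Q_t)$, so that $g_t$ is a real-valued semimartingale with $\md g_t = \Lcal_t g_t\,\md t + \md \Mcal_t$; I only need It\^o on the scalar $g_t$, so the fact that $Q_t$ lives on an abstract space is immaterial. Since $x \mapsto x^n$ is $C^2$ on $(0,\infty)$, It\^o gives
\begin{equation*}
	\md (g_t^n) = n g_t^{n-1}\,\Lcal_t g_t\,\md t + \frac{n(n-1)}{2} g_t^{n-2}\,\md[\Mcal_t,\Mcal_t]_t + n g_t^{n-1}\,\md \Mcal_t.
\end{equation*}
To make the last integral a genuine martingale I would stop at $\sigma_k = \inf\{t \geq 0 : g_t \geq k\}$ (and, since $n$ may lie in $[1,2)$, work with $(g_t+\varepsilon)^n$ to regularise $x^n$ at the origin), take expectations so the $\md\Mcal_t$ term drops, and recover the claim by monotone convergence / Fatou as $k\to\infty$, $\varepsilon\to 0$. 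This localisation is one of the two delicate points.

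For part (a), inserting the two hypotheses into the bounded-variation part of $g_t^n$ produces a degree-$(n+1)$ polynomial in $g_t$,
\begin{align*}
	n g_t^{n-1}\Lcal_t g_t + \tfrac{n(n-1)}{2} g_t^{n-2}\,\tfrac{\md[\Mcal_t,\Mcal_t]_t}{\md t}
	&\leq n\Big(\!-\beta + \tfrac{n-1}{2}\tau_2\Big) g_t^{n+1} + n\Big(3\alpha + \tfrac{n-1}{2}\tau_1\Big) g_t^{n} \\
	&\quad + 2n\gamma\, g_t^{n-1/2} + n\Big(r + \tfrac{n-1}{2}\tau_0\Big) g_t^{n-1}.
\end{align*}
The condition $n < 1 + 2\beta/\tau_2$ makes the coefficient of $g_t^{n+1}$ strictly negative, while $n < 1 + 2|\alpha|/\tau_1$ controls the $g_t^n$ coefficient. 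I would then absorb the subleading powers $g_t^{n-1/2}$ and $g_t^{n-1}$ into a fraction of the two dominant (negative) terms plus a constant, via the $\varepsilon$-Young inequality and $g^{n-1/2}\leq \tfrac12(g^{n-1}+g^n)$. This yields $\frac{\md}{\md t}\E[g_t^n] \leq -c\,\E[g_t^n] + C$ with $c>0$, and Gronwall's lemma gives $\sup_{t\geq 0}\E[g_t^n]<\infty$. The fiddly part is precisely this bookkeeping, which converts ``negative leading coefficients'' into the sharp threshold $1 + 2\min\{\beta/\tau_2,\,|\alpha|/\tau_1\}$ and consumes the factor $3\alpha$ appearing in the hypothesis.

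For part (b) the same expansion, now with the weaker hypotheses, gives after taking expectations
\begin{equation*}
	\frac{\md}{\md t}\E[g_t^n] \leq 2n\,\E[\tau_t(Q_t) g_t^{n-1/2}] + 2n\alpha\,\E[g_t^n] + n\,\E[\beta_t(Q_t) g_t^{n-1}] + \tfrac{n(n-1)}{2}\E[\gamma_t(Q_t) g_t^{n-1}].
\end{equation*}
Setting $u(t) = \E[g_t^n]^{1/n}$ and applying H\"older's inequality with exponents chosen so that the residual power of $g_t$ is exactly $g_t^n$ gives $\E[\tau_t g_t^{n-1/2}]\leq \delta_{\tau,t}(2n)\,u^{n-1/2}$ and $\E[\beta_t g_t^{n-1}],\,\E[\gamma_t g_t^{n-1}]$ bounded by $\delta_{\beta,t}(n)\,u^{n-1}$ and $\delta_{\gamma,t}\,u^{n-1}$. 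Using $\frac{\md}{\md t}\E[g_t^n] = n u^{n-1}u'(t)$ and dividing by $n u^{n-1}$ reduces everything to
\begin{equation*}
	u'(t) \leq 2\delta_{\tau,t}(2n)\, u^{1/2} + 2\alpha\, u + \delta_{\beta,t}(n) + \tfrac{n-1}{2}\delta_{\gamma,t}.
\end{equation*}
The key observation is that $\varepsilon$-Young bounds $2\delta_{\tau,t}(2n)u^{1/2}\leq \delta_{\tau,t}(2n)^2/|\alpha| + |\alpha|u$, and since $\alpha<0$ we have $2\alpha + |\alpha| = \alpha$, so the drift coefficient collapses to exactly $\alpha$; integrating the resulting linear inequality with integrating factor $e^{-\alpha t}$ produces the stated bound. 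I expect the genuine obstacle throughout to be the rigorous localisation (integrability of $g_t^{n-1}$ against $\Mcal_t$ and the passage to the limit), rather than the algebra, which is routine once the It\^o expansion is in place.
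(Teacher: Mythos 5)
This lemma is not proved in the paper at all: it is imported verbatim, with citation, from Del Moral and Tugaut (Lemma 7.2 of \cite{OnTheStabilitDelMo2018}), so there is no in-paper proof to compare against. Your reconstruction is correct and follows essentially the same route as the original reference: It\^o's formula applied to $G(Q_t)^n$ (with localisation of the martingale term and regularisation of $x\mapsto x^n$ near the origin for $n\in[1,2)$), yielding for (a) a polynomial drift whose leading coefficients are negative exactly under the stated threshold on $n$, and for (b) the H\"older/Young reduction to the linear differential inequality $u'\leq \alpha u + \delta_{\tau,t}(2n)^2/|\alpha| + \delta_{\beta,t}(n) + \frac{n-1}{2}\delta_{\gamma,t}$ for $u=\E[G(Q_t)^n]^{1/n}$, integrated with the factor $e^{-\alpha t}$; your exponents in the H\"older step ($2n$ against $2n/(2n-1)$, and $n$ against $n/(n-1)$) reproduce the constants in the statement exactly. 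Note also that your derivation implicitly corrects a typo in the quoted statement: the first term of the bound in (b) should read $e^{\alpha t}\,\E[G(Q_0)^n]^{1/n}$, i.e.\ it involves the initial value, not $Q_t$.
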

}

%\begin{lemma}
%	Assuming $\E_P[\hbY_0] = \bm{0}_{1 \times R}$, then $\E_P[\hbY_t] = \bm{0}_{1 \times R}$.
%\end{lemma}
%\begin{proof}
%Since
%\begin{align*}
%	\E_P[\hbY_t^{\top}] - \E_P[\hbY_0^{\top}] 
%		&= \int_{0}^t \bU_t^{\top} \widehat{\tilde{A}} \bU_t \E_P[\hbY_t^{\top}] \md t,
%		%+ \int_0^{t} \bU_t^{\top} \Soh \md \E_P[\hWcal_t^{\star}] \\
%		%&\quad + \int_0^{t} \bU_t^{\top} \widehat{P}_t H^{\top} \Gmoh \md \E_P[\hVcal_t^{\star}]
%		%\\
%		%&= \int_{0}^t \bU_t^{\top} \widehat{\tilde{A}} \bU_t \E_P[\hbY_t] \md t
%\end{align*}
%it holds
%\begin{equation*}
%	\norm{\E_P[\hbY_t^{\top}]} \leq \int_0^t \norm{\bU_t \widehat{\tilde{A}} \bU_t} \norm{\E_P[\hbY_t^{\top}]} \md t
%\end{equation*}
%and the result follows by applying \modt{Gronwall's} inequality~\cite{NumericalApproQuarte1994}.
%\end{proof}
%\noindent \modt{This property ensures that the particles $\widehat{\Xcal}_t^{(p)} = \modt{\widehat{U}}_t^0 + \bU_t \hbY_t^{(p)}$ are naturally decomposed into $\modt{\widehat{U}}_t^0$ (sample mean) and its zero-sample mean stochastic fluctuation $\bU_t \hbY_t^{(p)}$.
%Furthermore, the ``stochastic excess'' added to the sample mean allows to recover the following consistent formulation:
%}

\subsection{Well-posedness for fully observed dissipative dynamics}

In this section, we prove the well-posedness of the \dlr-\enkf~\mods{system~\eqref{eqn:mean-mode-particles} -- \eqref{eqn:stoch-modes-particles}} under \modt{two assumptions. 
\mods{The first is}} the (strong) assumption of fully observed dynamics \modt{$H = \bm{I}_d$ and i.i.d Gaussian observation noise ($\Gamma = \rho^{-1} \bm{I}_d$)}, i.e., 
\begin{equation} \label{eqn:full-obs}
	S = \rho \bm{I}_d, \quad \rho > 0. \tag{A1}
\end{equation}
\modt{While usually not applicable in practice, the assumption of fully observed dynamics is not uncommon when analysing theoretical properties of the \enkf, see e.g.~\cite{LongTimeStabiDeWil2018,OnTheStabilitDelMo2018,WellPosedKelly2014}, and in the following sections our proofs rely several times on~\ref{eqn:full-obs}.
}
\mods{
	In our approach, proving the well-posedness of the \dlr-\enkf~system is contingent on establishing (uniform) upper moment bounds of the sample covariance $\widehat{P}_t$. 
	Such bounds are in general not readily available, and therefore the second assumption restricts the dynamics to the subclass of dissipative dynamics	
\begin{equation} \label{eqn:lAsymneg}
	\lambda_{\max}(A + A^{\top}) < 0. \tag{A2}
\end{equation}
Under this assumption and a technical condition \modr{on} ${\rb{N}}$ being sufficiently large, we obtain moment bounds on $\widehat{P}_t$, allowing us to establish well-posedness.
}

\mods{Under the assumption of fully observed dynamics~\ref{eqn:full-obs}, we obtain the following result.}
\begin{lemma} \label{lem:well-pos-Yhat}
Assuming~\ref{eqn:full-obs} holds true, then~\cref{eqn:stoch-modes-particles} is well-posed.
\end{lemma}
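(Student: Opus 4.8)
The plan is to read \eqref{eqn:stoch-modes-particles} as a finite-dimensional Itô SDE for the ensemble $\hbY_t \in \Mat{P \times R}$ (equivalently $\mvec(\hbY_t)\in\R^{PR}$), driven by the independent Brownian motions $\hWcal_t,\hVcal_t$, with $\bU_t$ treated as a known coefficient. Indeed, by \eqref{eqn:phys-modes-particles} the physical modes evolve autonomously, remain in $\St(d,R)$ for all $t\geq 0$ (so $M_{\bU_t}=\bm{I}_R$), and define a continuous, bounded, deterministic map $t\mapsto\bU_t$. Under \ref{eqn:full-obs} one has $S=\rho\bm{I}_d$, $H=\bm{I}_d$, $\Gmoh=\rho^{\oh}\bm{I}_d$, and since $\widehat{P}_t=\bU_t\hM_{\hbY_t}\bU_t^{\top}$ with $\hM_{\hbY_t}=\tfrac{1}{P-1}\hbY_t^{\top}\hbY_t$, the per-particle equation becomes $\rd\hbY_t^{(p)}=(A_{\bU_t}-\rho\hM_{\hbY_t})\hbY_t^{(p)}\rd t+\bU_t^{\top}\Soh\,\rd\hWcal_t^{\star,(p)}-\rho^{\oh}\hM_{\hbY_t}\bU_t^{\top}\,\rd\hVcal_t^{\star,(p)}$, where $A_{\bU_t}=\bU_t^{\top}A\bU_t$. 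All coefficients are thus polynomial in $\hbY_t$ (no matrix inverse survives, thanks to $\bU_t\in\St(d,R)$) and jointly continuous in $(t,\hbY_t)$.

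First I would record that polynomial coefficients are locally Lipschitz, so standard SDE theory gives a unique local strong solution up to an explosion time $\tau_\infty=\lim_{n\to\infty}\tau_n$, with $\tau_n=\inf\{t:\norm{\hbY_t}_F\geq n\}$. Pathwise uniqueness already holds on $[0,\tau_\infty)$, so it only remains to prove non-explosion, namely $\tau_\infty=\infty$ almost surely, to conclude global well-posedness.

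The heart of the argument is a Lyapunov estimate for $V(\hbY_t)=\norm{\hbY_t}_F^2=(P-1)\tr(\hM_{\hbY_t})$. Writing $N_t=\hbY_t^{\top}\hbY_t$ and applying Itô's formula, summing over $p$, the $\rd t$-drift of $V$ equals $2\tr(N_tA_{\bU_t})-\tfrac{2\rho}{P-1}\tr(N_t^2)$ plus the quadratic-variation contributions $(P-1)\tr(\Sigma_{\bU_t})$ from the $\hWcal$-noise and $+\tfrac{\rho}{P-1}\tr(N_t^2)$ from the $\hVcal$-noise (the sample centering producing only the benign self-covariation factor $1-\tfrac1P$). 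The decisive observation is that the quartic contribution of the $\hVcal$-diffusion, $+\tfrac{\rho}{P-1}\tr(N_t^2)$, is strictly dominated by the quartic dissipation $-\tfrac{2\rho}{P-1}\tr(N_t^2)$ originating from the $-\rho\widehat{P}_t$ drift term, the extra factor $2$ being supplied by Itô's formula; their sum is $-\tfrac{\rho}{P-1}\tr(N_t^2)\leq 0$. Bounding $\tr(N_tA_{\bU_t})\leq c_1\tr(N_t)=c_1V$ via \eqref{eqn:tr-lbda-ineq} (using $N_t\geq 0$) and $\tr(\Sigma_{\bU_t})\leq\tr(\Sigma)$, the drift of $V$ satisfies the dissipative bound $2c_1V+c_2$ with constants independent of $n$. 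A localised Gronwall/comparison argument applied to $t\mapsto\E[V(\hbY_{t\wedge\tau_n})]$ then yields $\sup_n\sup_{t\leq T}\E[V(\hbY_{t\wedge\tau_n})]\leq C(T)<\infty$, and Markov's inequality gives $\bbP(\tau_n\leq T)\leq C(T)/n^2\to 0$, whence $\tau_\infty=\infty$ a.s.

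I expect the main obstacle to be the careful bookkeeping of the quadratic-variation terms, in particular tracking the sample-centered noises $\hWcal_t^{\star},\hVcal_t^{\star}$, which couple the particles, and verifying the precise factor-of-two cancellation between the $-\rho\widehat{P}_t$ drift dissipation and the $\hVcal$-diffusion that renders the net quartic term nonpositive. It is worth noting that this cancellation holds for every $P\geq 2$ and requires neither the dissipativity assumption \ref{eqn:lAsymneg} nor a lower bound on the ensemble size, both of which will be needed only later when closing the estimates for the full coupled system.
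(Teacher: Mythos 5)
Your proposal is correct and takes essentially the same approach as the paper: both treat $\bU_t$ as a known $\St(d,R)$-valued coefficient so that the system becomes an SDE in $\hbY_t$ with polynomial (hence locally Lipschitz) coefficients, and both hinge on the identical cancellation in which the negative quartic term produced by the $-\rho\widehat{P}_t$ drift absorbs the positive quartic term $\tfrac{\rho}{P-1}\tr\big((\hbY_t^{\top}\hbY_t)^2\big)$ generated by the $\widehat{P}_t$-dependent diffusion. The only difference is presentational: the paper verifies this bound as the monotonicity condition of~\cite[Theorem II.3.5]{StochasticDiffMao2008} and invokes that theorem, whereas you unpack the same criterion into the explicit localisation--Gronwall--Markov non-explosion argument.
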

\begin{proof}
	\mods{We use here~\cite[Theorem II.3.5]{StochasticDiffMao2008}, which states that a $n$-dimensional SDE $\md \xi_t = a(t, \xi_t) \md t + b(t, \xi_t) \md \Wcal_t$ with $a$ and $b$ locally Lipschitz and satisfying a monotonicity condition 
		\begin{equation*}
			\xi_t^{\top} a(t, \xi_t) + \frac{1}{2} \norm{b(t,\xi_t)}^2_F \leq K (1 + \norm{\xi_t}^2)
		\end{equation*}
	has a unique strong solution in the sense of~\cite[Definition II.2.1]{StochasticDiffMao2008}, that is the SDE holds for every $t \geq 0$ almost-surely, $\xi_t$ is continuous almost-surely and $\Acal_t$-adapted, and $a(t, \xi_t) \in L^1([0, t], \R^n)$ and $b(t, \xi_t) \in L^2([0,t], \Mat{n\times n})$.}
Define $A_{\sym} = \frac{1}{2} (A + A^{\top})$ and the symmetric matrix $\mS \in \Mat{{\rb{N}} \times {\rb{N}}}$ by $\mS_{ij} = \delta_{ij} - \nicefrac{1}{{\rb{N}}}$, and rewrite $\hWcal^{\star}_t = \hWcal_t \mS$.
Vectorising~\cref{eqn:stoch-modes-particles} yields the non-linear $R{\rb{N}}$-dimensional SDE
\begin{multline} \label{eqn:vec-dY}
	\md \mvec(\hbY_t^{\top}) 
	= 
	(\bm{I}_{\rb{N}} \otimes \bU_t^{\top}\widehat{\tilde{A}} \bU_t) \mvec(\hbY_t^{\top}) \md t
	+ (\mS \otimes \bU_t^{\top} \Soh) \md \mvec(\hWcal_t) 
	\\
	+ (\mS \otimes \bU_t^{\top} \widehat{P}_t H^{\top} \Gmoh) \md \mvec(\hVcal_t),
\end{multline}
having used $\mvec(ABC) = (C^{\top} \otimes A) \mvec(B)$.
Left-multiplying the drift term by $\mvec(\hbY_t^{\top})^{\top}$ yields
\begin{align*}
	\mvec(\hbY^{\top}_t)^{\top} (\bm{I}_{\rb{N}} \otimes \bU_t^{\top}\widehat{\tilde{A}} \bU_t) \mvec(\hbY^{\top}_t) 
		&= \tr(\hbY_t \bU_t^{\top} \widehat{\tilde{A}} \bU_t \hbY_t^{\top}) 
	\\
		&= \tr(\hbY_t \bU_t^{\top}  A \bU_t \hbY_t^{\top}) 
		- \rho \tr(\hbY_t \bU_t^{\top} \widehat{P}_t \bU_t \hbY_t^{\top})
	\\
		&= \tr(\hbY_t \bU_t^{\top}  A_{\sym} \bU_t \hbY_t^{\top}) 
		- \rho \tr(\hbY_t \bU_t^{\top} \widehat{P}_t \bU_t \hbY_t^{\top})
		\\
		&\leq \lambda_{\max}(\bU_t^{\top} A_{\sym} \bU_t) \tr(\hbY_t^{\top} \hbY_t) - \rho \tr(\hbY_t \bU_t^{\top} \widehat{P}_t \bU_t \hbY_t^{\mods{\top}})
		\\
		&\leq \lambda_{\max}(A_{\sym}) \norm{\mvec(\hbY_t)}^2_{\mods{F}} - \frac{\rho}{{\rb{N}}-1} \tr(\hbY_t \hbY_t^{\top} \hbY_t \hbY_t^{\top}).
\end{align*}
On the other hand,
\begin{multline*}
	\tr( (\mS \otimes \Soh \bU_t ) (\mS \otimes \bU_t^{\top} \Soh) ) +  \tr((\mS \otimes \Gmoh H \widehat{P}_t \bU_t) (\mS \otimes \bU_t^{\top} \widehat{P}_t H^{\top} \Gmoh)) 
	\\
	= \tr( \mS^2 \otimes \Soh \bU_t \bU_t^{\top} \Soh ) + \tr(\mS^2 \otimes \Gmoh H \widehat{P}_t \bU_t \bU_t^{\top} \widehat{P}_t H^{\top} \Gmoh)
	\\
	= \tr(\mS^2) \tr(\bU_t^{\top} \Sigma \bU_t) + \frac{\rho}{({\rb{N}}-1)^2} \tr(\mS^2) \tr( \mods{\bU_t} \hbY_t^{\top} \hbY_t  \hbY_t^{\top} \hbY_t \mods{\bU_t^{\top}})
	\\
	\leq ({\rb{N}}-1) \tr(\Sigma) + \frac{\rho}{{\rb{N}}-1} \tr(\hbY_t  \hbY_t^{\top} \hbY_t \hbY_t^{\top} ),
\end{multline*}
having used \mods{$\mS^2 = \mS$ and $\tr(\mS) = {\rb{N}}-1$}.  
Hence, the sum of the two above developments is bounded by
\begin{equation*}
	\max\{\lambda_{\max}(A_{\sym}), ({\rb{N}}-1) \tr(\Sigma) \} (1 + \norm{\mvec(\hbY_t)}^2),
\end{equation*}
\modt{leading to a monotonicity condition,}
which, along \modt{with} the local Lipschizianity of the drift and diffusion of the vectorised equation, implies the well-posedness~\cite[Theorem II.3.5]{StochasticDiffMao2008}.

%\modt{To show the well-posedness of~\cref{eqn:mean-mode-particles}, again expanding $\md \Zcal_t$ via~\cref{eqn:truth-noisy-observations}, consider the system given by the augmented state system $Q_t = [\modt{\widehat{U}}_t^0, \Xcal_t^{\signal}, \mvec(\hbY_t^{\top})]$, governed by~\cref{eqn:mean-mode-particles,eqn:truth-model-dynamics,eqn:stoch-modes-particles}, and written in condensed form as
%	\begin{equation*}
%		\md [\modt{\widehat{U}}_t^0, \Xcal_t^{\signal}, \mvec(\hbY_t^{\top})] = (f_1, f_2, f_3) \md t + (g_1, g_2, g_3) \md [\mvec(\hWcal_t), \mvec(\hVcal_t), \widetilde{\Wcal}_t, \widetilde{\Vcal}_t].
%	\end{equation*}
%	One verifies that the coefficients in this system are locally Lipschitz, and we proceed again by recovering a monotonicity condition. 
%	Observe that $\mvec(\hbY_t^{\top})^{\top} f_3 + \norm{g_3}^2_F \leq C(1 + \norm{Q_t}^2)$, as it is the same term as in the first computation. 
%	Next, $(\Xcal^{\signal}_t)^{\top} f_2 + \norm{g_2}^2_F \leq C(1 + \norm{Q_t}^2)$, since the dynamics of $f_2$ are linear-affine and $g_2 = \Sigma^{\oh}$.
%	Finally, 
%	\begin{align*}
%		(\modt{\widehat{U}}_t^0)^{\top} f_1 
%		&=
%		(\modt{\widehat{U}}_t^0)^{\top} (A - \widehat{P}_t S) \modt{\widehat{U}}_t^0 + (\modt{\widehat{U}}_t^0)^{\top} f + (\modt{\widehat{U}}_t^0)^{\top} \widehat{P}_t S \Xcal_t^{\signal} \\
%		&\leq \frac{1}{2} (\lambda_{\max}(A_{\sym}) + 1) \norm{\modt{\widehat{U}}_t^0}^2 + \frac{1}{2} \norm{f}^2 + \rho | (\modt{\widehat{U}}_t^0)^{\top} \widehat{P}_t \Xcal_t^{\signal} |
%	\end{align*}
%}
\end{proof}

\modt{
	In the setting of full observations, establishing the well-posedness of the \dlr-\enkf~system~\eqref{eqn:mean-mode-particles} to~\eqref{eqn:stoch-modes-particles} therefore reduces to checking the well-posedness of the sample mean equation~\eqref{eqn:mean-mode-particles}, as the well-posedness of~\cref{eqn:phys-modes-particles} was established \modr{already} in~\cref{sssec:phys-modes}.
	This last step is not immediate, as the sample mean depends on the process $\widehat{P}_t$ and, \mods{as discussed beforehand, suitable (uniform) upper bounds on the moments are not known \textit{a priori} for general dynamics}.
	\mods{In what follows, we therefore restrict ourselves to dissipative dynamics \modr{satisfying assumption}~\ref{eqn:lAsymneg} and establish well-posedness in that setting.}
	%In this work, we therefore consider the subclass of dissipative dynamics	
%\begin{equation} \label{eqn:lAsymneg}
%	\lambda_{\max}(A + A^{\top}) < 0, \tag{A3}
%\end{equation}
%for which, under a technical condition of ${\rb{N}}$ being sufficiently large, we obtain moment bounds on $\widehat{P}_t$, allowing us to establish well-posedness.
	We begin \mods{with a few preparatory results. 
	The proofs of the following \rb{three statements} are housed in Appendix~\ref{app:b}.}
\mods{
\begin{lemma} \label{lem:beta-gamma bounds}
	Assuming~\ref{eqn:full-obs} and~\ref{eqn:lAsymneg}, then for $\modr{{\rb{N}} > 4R - 1}$ it holds
	\begin{equation}
		\sup_{t \geq 0} \E[(\tr(\hP_{\hbY_t}))^{n}] < \infty.
	\end{equation}
\end{lemma}
\begin{lemma} \label{lem:trMYt-sup-bounds}
	Assuming~\ref{eqn:full-obs} and~\ref{eqn:lAsymneg}, then for $\modr{{\rb{N}} > 4R -1}$ it holds
	\begin{equation*}
		\E[\sup_{0 \leq s \leq t}(\tr(\hP_{\hbY_s}))^{2}] < \infty\modr{, \quad \forall t > 0.}
	\end{equation*}
\end{lemma}
}

\begin{proposition} \label{prop:vec-Yt-bounds}
	Assuming~\ref{eqn:full-obs} and~\ref{eqn:lAsymneg}, \rb{then for $\modr{{\rb{N}} > 4R -1}$, it holds}
	\begin{equation*}
		\E[\sup_{0 \leq s \leq t} \norm{\mvec(\hbY_s)}^2] < \infty\modr{, \quad \forall t > 0.}
	\end{equation*}
	Additionally, the sample paths of $\hbY_s$ are bounded almost surely. 
\end{proposition}
\mods{With the previous result we can now show the well-posedness of the \dlr-\enkf.}
}

\modt{
\begin{theorem}  
	Assuming~\ref{eqn:full-obs} and~\ref{eqn:lAsymneg} hold true and that the number of particles verifies $\modr{{\rb{N}} > 4R - 1}$.
	Then the \dlr-\enkf~system~\eqref{eqn:mean-mode-particles} to~\eqref{eqn:stoch-modes-particles} is well-posed. 
\end{theorem}
\begin{proof}
For the well-posedness of~\cref{eqn:mean-mode-particles}, begin by expanding $\md \Zcal_t$ via~\cref{eqn:truth-noisy-observations} to obtain
	\begin{multline*}
	\md \widehat{U}_t^0 \! =
	(A - \widehat{P}_t S)\modt{\widehat{U}}_t^0 \md t 
	+ (\modt{f} + \widehat{P}_t S \Xcal_t^{\signal}) \md t 
	+ \modt{\Pi_{\bU_t}} \Soh \rd \E_{\rb{N}} [\hWcal_t] 
	+ \widehat{P}_t H^{\top} \Gmoh  \md (\widetilde{\Vcal}_t - \E_{\rb{N}} [\widehat{\Vcal}_t]).
	%= \tilde{a}(t, \widehat{U}_t^0, \widehat{P}_t) \md t + \tilde{b}(t, \widehat{U}_t^0, \widehat{P}_t) \md [\widehat{\Wcal}_t, \widehat{\Vcal}_t, \widetilde{\Vcal}_t].
	\end{multline*}
	%Note the explicit dependence on $\widehat{P}_t$.
	Combining with~\cref{eqn:stoch-modes-particles}, we consider the augmented state $[\widehat{U}_t^0, \hbY_t]$ system
	\begin{equation*}
		\md [\widehat{U}_t^0, \hbY_t] = a(t, \widehat{U}_t^{0}, \hbY_t) \md t + b(t, \widehat{U}_t^{0}, \hbY_t) \md [\widehat{\Wcal}_t, \widehat{\Vcal}_t, \widetilde{\Vcal}_t].
		%\quad \widehat{P}_t = \frac{1}{P-1} \bU_t \hbY_t^{\top} \hbY_t \bU_t^{\top},
	\end{equation*}
	\mods{for suitable drift $a$ and diffusion $b$.}
	%i.e., the sample covariance equation closes the SDE equation. 
	%\begin{equation*}
	%	\md \hbY_t^{\top} = \tilde{e}(t, \hbY_t, \widehat{P}_t) \md t + \tilde{f}(t, \hbY_t, \widehat{P}_t) \md [\widehat{\Wcal}_t^{\star}, \widehat{\Vcal}_t^{\star}].
	%\end{equation*}
	%Consider the truncation operator 
	%\begin{align*}
	%	\Rcal_n : \Mat[0]{d \times d} &\to \Mat[0]{d \times d}
	%	\\
	%	P = U \mathrm{diag}(\lambda_1, \ldots, \lambda_d) U^{\top} &\mapsto \Rcal_n(P) = U \mathrm{diag}(\max\{\lambda_1, n\}, \ldots, \max\{\lambda_d, n\}) U^{\top},
	%\end{align*}
	%where $\lambda_1 \geq \ldots \geq \lambda_d \geq 0$.
	%If $P$ has rank $R$, then $\lambda_{R+1} = \ldots = \lambda_d = 0$, and $\norm{\Rcal_n(P)}_F^2 \leq R n^2$.
	%%In a slight abuse of notation, for rank-$R$ matrices $P = U M U^{\top}$, with $M \in \Mat[0]{R \times R}$, we extend $\Rcal(P) = U \Rcal_n(M) U^{\top}$.
	%Furthermore, if $P$ is a $\Mat[0]{d \times d}$-valued random variable with $k$-th moment bound ($k \geq 1$), $\norm{\Rcal_n(P)}_F \leq \norm{\Rcal_{n+1}(P)}_F \leq \ldots \leq \norm{P}_F$ pointwise in $\omega$ and by the Lebesgue dominated convergence theorem, 
	%\begin{equation*}
	%	\lim_{n \rightarrow \infty} \E[\norm{\Rcal_n(P)}^k_F] = \E[\norm{P}^k_F].
	%\end{equation*}
	We consider the sequence of truncated problems
	\begin{align*}
		\md [\widehat{U}_t^{0,n}, \hbY_t^{n}] &= a_n(t, \widehat{U}_t^{0,n}, \hbY_t^{n}) \md t + b_n(t, \widehat{U}_t^{0,n}, \hbY_t^{n}) \md [\widehat{\Wcal}_t, \widehat{\Vcal}_t, \widetilde{\Vcal}_t]
	\end{align*}
	where
	\begin{equation*}
		a_n(t, \widehat{U}_t^{0,n}, \hbY_t^{n}) = 
		\left\{
			\begin{aligned}
				&a(t, \widehat{U}_t^{0,n}, \hbY^{n}_t) && \text{if } |(\widehat{U}_t^{0,n}, \hbY_t^n)| \leq n
				\\
				&a(t, n \widehat{U}_t^{0,n} / |(\widehat{U}_t^{0,n}, \hbY_t^n)|, n \hbY^{n}_t / |(\widehat{U}_t^{0,n}, \hbY_t^n)|) && \text{if } |(\widehat{U}_t^{0,n}, \hbY_t^n)| > n,
			\end{aligned}
			\right.
	\end{equation*}
	where $|(\widehat{U}_t^{0,n}, \hbY_t^n)|^2 = \norm{\widehat{U}_t^{0,n}}^2 + \norm{\mvec(\modr{\hbY_t^n})}^2$, and similarly for $b_n$. 
	The coefficients of the truncated problems satisfy a Lipschitz condition and a linear growth bound-type condition, ensuring the well-posedness of each problem on $[0,T]$. 
	Next, define $\tau_n = t \wedge \inf \{ 0 \leq s \leq t \,:\, |(\widehat{U}_t^{0,n}, \hbY_t^n)| \geq n \}$. 
	\mods{We focus on the stopped process $(\widehat{U}_{t \wedge \tau_n}^{n,0}, \hbY^{n,0}_{t \wedge \tau_n})$, and will show it is bounded almost-surely uniformly in $n$.}	
	We first deal with the stochastic modes, as they are independent of $\widehat{U}_{\mods{t \wedge \tau_n}}^{0,n}$. 
	%Define $\tilde{\tau}_n = t \wedge \inf \{ 0 \leq s \leq t \,:\, \norm{ \mvec(\hbY_s)} > n\}$, where $\hbY_t$ the solution to~\cref{eqn:stoch-modes-particles}.
	Since up to $\tau_n$, the equations for the truncated process is identical to~\cref{eqn:stoch-modes-particles}, of which the solution is known to be unique, it holds $\mods{\hbY^n_{s \wedge \tau_n}} = \hbY^n_s = \hbY_s$ a.s. for $s \in [0, \tau_n]$. 
	By~\Cref{prop:vec-Yt-bounds}, $\norm{\mvec(\hbY_t)}$ is a.s. bounded, and therefore in order to verify that $|(\widehat{U}^{0,n}_{\mods{t \wedge \tau_n}}, \hbY_{\mods{t \wedge \tau_n}}^{n})|$ is a.s. bounded \mods{uniformly in $n$}, it remains to verify that $\widehat{U}^{0,n}_{\mods{t \wedge \tau_n}}$ is a.s. bounded \mods{uniformly in $n$}. 
	\mods{Developing}
	\begin{align*}
		\norm{\widehat{U}_{\mods{t \wedge \tau_n}}^{0,n}}^2 - \norm{\widehat{U}_0^{0,n}}^2
		&=
	\int_0^{\mods{t \wedge \tau_n}}	2 \langle \widehat{U}_{\mods{u \wedge \tau_n}}^{0,n} , \md \widehat{U}_{\mods{u \wedge \tau_n}}^{0,n} \rangle 
	+ \sum_{i=1}^d \md [ \widehat{U}_{\mods{u \wedge \tau_n}}^{0,n}(i), \widehat{U}_{\mods{u \wedge \tau_n}}^{0,n}(i)]_{u} 
	\\
		&= \int_0^{\mods{t \wedge \tau_n}} \Big\{ \langle \widehat{U}_{\mods{u \wedge \tau_n}}^{0,n} , (A + A^{\top}) \widehat{U}_{\mods{u \wedge \tau_n}}^{0,n} \rangle - 2 \rho \langle \widehat{U}_{\mods{u \wedge \tau_n}}^{0,n}, \widehat{P}^n_{\modr{u \wedge \tau_n}} \widehat{U}_{\mods{u \wedge \tau_n}}^{0,n} \rangle 
		\\
		&\quad + 2 \langle \widehat{U}_{\mods{u \wedge \tau_n}}^{0,n} , f_{\modr{u}} \rangle + 2 \langle \widehat{U}_{\mods{u \wedge \tau_n}}^{0,n} , \widehat{P}^n_{\mods{u \wedge \tau_n}} \Xcal_{u}^{\signal} \rangle \Big\} \md u 
		\\
		&\quad + 2 \int_0^{\mods{t \wedge \tau_n}} (\widehat{U}^{0,n}_{\mods{u \wedge \tau_n}})^{\top} \mods{\Pi_{\bU_{u}}} \Sigma^{\oh} \md \E_{\rb{N}}[\widehat{\Wcal}_{u}] 
		\\
		&\quad + 2 \int_0^{\mods{t \wedge \tau_n}} (\widehat{U}^{0,n}_{\mods{u \wedge \tau_n}})^{\top} \widehat{P}^{\mods{n}}_{\mods{u \wedge \tau_n}} H^{\top} \Gmoh \md (\widetilde{\Vcal}_{u} - \E_{\rb{N}}[\widehat{\Vcal}_{u}]) 
		\\
		&\quad + \int_0^{\mods{t \wedge \tau_n}} \left( \tr(\mods{\Pi_{\bU_{u}}} \Sigma \mods{\Pi_{\bU_{u}}}) + \rho \tr((\widehat{P}^n_{\mods{u \wedge \tau_n}})^2) \right) \md u
		\\
		& \leq \int_0^{\mods{t \wedge \tau_n}} \modr{\frac{1}{4t}} \norm{\widehat{U}^{0,n}_{\mods{u \wedge \tau_n}}}^2 \md u 
		+  \int_0^{\mods{t}} \modr{8t} \left( \norm{f_{\modr{u}}}^2 + \norm{\mods{\widehat{P}_{u}}}_{\mods{F}}^2 \norm{\Xcal_{u}^{\signal}}^2 \right) \md u
		\\
		&\quad + \tr(\Sigma)t + \rho \int_0^{\mods{t}} \tr(\mods{\widehat{P}_{\mods{u}}})^2  \md u
		\\
		&\quad + 2 \left| \mods{\frac{1}{{\rb{N}}}\sum_{p=1}^{\rb{N}}} \int_{0}^{\mods{t \wedge \tau_n}} (\widehat{U}^{0,n}_{\mods{u \wedge \tau_n}})^{\top} \mods{\Pi_{\bU_{u}}} \Sigma^{\oh} \mods{\md \hWcal^{(p)}_{u}} \right| 
		\\
		&\quad + 2 \left| \mods{\frac{1}{{\rb{N}}}\sum_{p=1}^{\rb{N}}} \int_{0}^{\mods{t \wedge \tau_n}} (\widehat{U}^{0,n}_{\mods{u \wedge \tau_n}})^{\top} \widehat{P}^n_{\mods{u \wedge \tau_n}} H^{\top} \Gmoh \md (\widetilde{\Vcal}_{u} - \mods{{\hVcal}^{(p)}_{u}})\right|.
	\end{align*}
	\mods{having used \modr{the $\varepsilon$-Young inequality to obtain the first two terms of the upper bound,} the fact that $\widehat{P}^n_{s \wedge \tau_n} = \widehat{P}_{s}$ for $s \in [0,\tau_n]$, and extending the upper bound of the integrals depending only on $\widehat{P}_s$ to $t$ by positivity.}
	Taking the supremum over $[0,t]$ and then the expectation, the first term of the above expression is bounded by
	\begin{equation*}
		\E\left[\sup_{0 \leq s \leq t} \int_{0}^{s \wedge \tau_n} \frac{1}{4t} \norm{\widehat{U}^{0,n}_{u \wedge \tau_n}}^2 \md u \right] 
		\leq 
		\frac{1}{4} \E[ \sup_{0 \leq s \leq t} \norm{\widehat{U}^{0,n}_{s \wedge \tau_n}}^2].
	\end{equation*}
	Using this bound, and applying the Burkholder–Davis–Gundy inequality~\cite[Theorem I.7.3]{StochasticDiffMao2008} for the last two terms, yields
	\begin{multline*}
		\E[\sup_{0 \leq s \leq t} \norm{\widehat{U}^{0,n}_{{s \wedge \tau_n}}}^2] 
		\leq  {\frac{1}{4} \E[ \sup_{0 \leq s \leq {t}} \norm{\widehat{U}^{0,n}_{s \wedge \tau_n}}^2 ]} 
		+ {8t} \int_0^t \left(  \norm{f_{{u}}}^2 + \norm{\Xcal_u^{\signal}}^2 {\E[\norm{\widehat{P}_u}^2_F]} \right) {\md u} 
		\\
		+\E[\norm{\widehat{U}^{0,n}_0}^2] 
		+ \tr(\Sigma)t 
		+ \rho \int_{0}^t {\E[\tr(\widehat{P}_u)^2]} \md u
		+ {C_{1}} \E \left[ \left( \int_0^{{t \wedge \tau_n}} (\widehat{U}^{0,n}_{{u \wedge \tau_n}})^{\top} {\Pi_{\bU_u}} \Sigma {\Pi_{\bU_u}} \widehat{U}^{0,n}_{{u \wedge \tau_n}}) \md u \right)^{\oh} \right]
	\\
		+ {C_2} \E \left[ \left( \int_{0}^{{t \wedge \tau_n}}(\widehat{U}^{0,n}_{{s \wedge \tau_n}})^{\top} {(\widehat{P}^n_{{s \wedge \tau_n}})^2} \widehat{U}^{0,n}_{{s \wedge \tau_n}}  \md s \right)^{\oh} \right],
	\end{multline*}
%	\begin{align*}
%		\E[\sup_{0 \leq s \leq t} \norm{\widehat{U}^{0,n}_{\mods{s \wedge \tau_n}}}^2] 
%		&\leq  \modr{\frac{1}{4} \E[ \sup_{0 \leq s \leq \modr{t}} \norm{\widehat{U}^{0,n}_{s \wedge \tau_n}}^2 ]} 
%		+ \modr{8t} \int_0^t \left(  \norm{f_{\modr{u}}}^2 + \norm{\Xcal_u^{\signal}}^2 \mods{\E[\norm{\widehat{P}_u}^2_F]} \right) \mods{\md u} 
%		\\
%		&+\E[\norm{\widehat{U}^{0,n}_0}^2] 
%		+ \tr(\Sigma)t 
%		+ \rho \int_{0}^t \mods{\E[\tr(\widehat{P}_u)^2]} \md u
%		\\
%		&+ \mods{C_{1}} \E \left[ \left( \int_0^{\mods{t \wedge \tau_n}} (\widehat{U}^{0,n}_{\mods{u \wedge \tau_n}})^{\top} \mods{\Pi_{\bU_u}} \Sigma \mods{\Pi_{\bU_u}} \widehat{U}^{0,n}_{\mods{u \wedge \tau_n}}) \md u \right)^{\oh} \right]
%	\\
%		&+ \mods{C_2} \E \left[ \left( \int_{0}^{\mods{t \wedge \tau_n}}(\widehat{U}^{0,n}_{\mods{s \wedge \tau_n}})^{\top} \mods{(\widehat{P}^n_{\mods{s \wedge \tau_n}})^2} \widehat{U}^{0,n}_{\mods{s \wedge \tau_n}}  \md s \right)^{\oh} \right],
%	\end{align*}
	where $C_1 = \frac{2 \sqrt{32}}{\modr{\sqrt{{\rb{N}}}}}$, and $C_2 = \rho C_1 \modr{\sqrt{{\rb{N}}+1}}$, both depending on ${\rb{N}}$, but admitting the uniform upper bound $C_1,C_2 \leq C \coloneqq 4 \sqrt{32} \max\{\rho,1\}$ for any ${\rb{N}} \geq 1$.
	%where $C = \mods{2 \sqrt{32}}(1 + \frac{1}{{\rb{N}}}) \max\{1, \rho\} \leq 8 \mods{\sqrt{2}} \max\{1, \rho\}$.
	To bound the term in the last line, \mods{we again leverage the fact that $\widehat{P}^n_{s \wedge \tau_n}$ coincides with $\widehat{P}_s$ until the stopping time, and then extend the integral bound to $t$; this yields the first inequality below, which is then developed as} 
	\begin{align*}
		C\E \left[ \left( \int_{0}^{\mods{t \wedge \tau_n}} (\widehat{U}^{0,n}_{\mods{s \wedge \tau_n}})^{\top} (\widehat{P}^n_{\mods{s \wedge \tau_n}})^2 \widehat{U}^{0,n}_{\mods{s \wedge \tau_n}}  \md s \right)^{\oh} \right] 
		&\leq C\E \left[ \left( \int_{0}^{\mods{t}} \sup_{0 \leq s' \leq t} \lambda_{\max}(\mods{\widehat{P}_{\mods{s'}}})^2 \norm{\widehat{U}^{0,n}_{\mods{s \wedge \tau_n}}}^2  \md s \right)^{\oh} \right]
		\\
		&\leq C\E\left[ \left( \sup_{0 \leq s \leq t} \tr(\mods{\widehat{P}_{s}})^2  \sup_{0 \leq s \leq t} \norm{\widehat{U}^{0,n}_{\mods{s \wedge \tau_n}}}^2 t \right)^{\oh} \right]
		\\
		&\leq \E \left[ C t^{\oh} \sup_{0 \leq s \leq t} \tr(\mods{\widehat{P}_{s}})  \sup_{0 \leq s \leq t} \norm{\widehat{U}^{0,n}_{\mods{s \wedge \tau_n}}} \right]
		\\
		&\leq  C^2 t \E \left[ \sup_{0 \leq s \leq t} \tr(\mods{\widehat{P}_{s}})^2 \right] + \frac{1}{4} \E \left[ \sup_{0 \leq s \leq t} \norm{\widehat{U}^{0,n}_{\mods{s \wedge \tau_n}}}^2 \right],
	\end{align*}
	having used the $\varepsilon$-Young inequality in the last line.
	Proceeding similarly for the first term,
	\begin{equation*}
		\mods{C} \E \left[ \left( \int_0^{\mods{t \wedge \tau_n}} (\widehat{U}^{0,n}_{\mods{u \wedge \tau_n}})^{\top} \mods{\Pi_{\bU_u}} \Sigma \mods{\Pi_{\bU_u}} \widehat{U}^{0,n}_{\mods{u \wedge \tau_n}}) \md u \right)^{\oh} \right]
		\leq 
		C^2 \tr(\Sigma) t + \frac{1}{4} \E \left[ \sup_{0 \leq s \leq t} \norm{\widehat{U}^{0,n}_{\mods{s \wedge \tau_n}}}^2 \right]
	\end{equation*}
	Combining the last 3 bounds and rearranging 
	\begin{multline*}
		\E[\sup_{0 \leq s \leq t} \norm{\widehat{U}^{0,n}_{\mods{s \wedge \tau_n}}}^2] 
		\leq  
		K \Big( \E[\norm{\widehat{U}^{0,n}_0}^2] 
			+ \int_0^t \norm{f_{\modr{u}}}^2 \md u 
		+ \left( \sup_{0 \leq s \leq t} \E[\norm{\widehat{P}_s}^2] \right) \int_{0}^t \norm{\Xcal_u^{\signal}}^2 \md u 
		\\
		+ \tr(\Sigma)t 
		+ \rho \int_{0}^t \E[\tr(\widehat{P}_u)^2] \md u 
		+ C^2 \tr(\Sigma) t
		+ C^2 t \E\left[ \sup_{0 \leq s \leq t} \tr(\widehat{P}_s)^2 \right]
		\Big).
	\end{multline*}
	By~\cref{lem:beta-gamma bounds} and~\cref{lem:trMYt-sup-bounds}, all the terms in the rhs involving $\widehat{P}_s$ are bounded.
	\modr{This} shows that $\E[\sup_{0 \leq s \leq t} \norm{\widehat{U}^{0,n}_{\mods{s \wedge \tau_n}}}^2] < \mods{M}$ \mods{uniformly in $n$}, and following a \mods{Borel-Cantelli argument} as in~\Cref{prop:vec-Yt-bounds}, we obtain that $\norm{\widehat{U}^{0,n}_{\mods{s \wedge \tau_n}}}$ is \mods{a.s.} bounded \mods{uniformly in $n$}. 
	This implies that $\tau_n \rightarrow \modr{\infty}$ as $n \rightarrow \infty$.
	% OLD PROOF
	%\begin{multline*}
	%	\frac{1}{2} \E\norm{\widehat{U}_t^{0,n}}^2 \leq \E\norm{\widehat{U}_t^{0,n}}^2 + \tr(\Sigma) t + t \sup_{0 \leq s \leq t} \E \norm{\widehat{P}_s}^2_F
	%	\\
	%	+ t^2 \left( \max_{0 \leq r \leq t} \norm{\Xcal_r^{\signal}} \cdot \sup_{0 \leq s \leq t} \E[\norm{\widehat{P}_s}^2_F]^{\oh}  + \norm{f} \right)^2.
	%\end{multline*}
	%As 
	%\begin{equation*}
	%	\norm{\widehat{P}_t}^2_F = \tr(\widehat{P}_t^2) = \tr(\hP_{\hbY_t}^2) = \sum_{i=1}^R \lambda_i(\hP_{\hbY_t})^2 \leq \left(\sum_{i=1}^R \lambda_i(\hP_{\hbY_t})\right)^2 \leq (\tr(\hP_{\hbY_t}))^2,
	%\end{equation*}
	%taking expectations, under the condition $P > R+1$, $\E[(\tr(\hP_{\hbY_t}))^2]$ is bounded by~\cref{lem:beta-gamma bounds}, and therefore the second moments of $\widehat{U}_t^{0,n}$ are uniformly bounded.
	%	
	For $n,m > 0$, let $\sigma = \min\{\tau_n, \tau_m\}$.
	Therefore,	
	\begin{equation*}
		\bbP(\norm{\widehat{U}_{\mods{t \wedge \tau_n}}^{0,n} - \widehat{U}_{\mods{t \wedge \tau_m}}^{0,m}} > 0) \leq \bbP(\sigma < t) \leq \bbP(\tau_n < t) + \bbP(\tau_m < t) \rightarrow 0 \quad \text{for } n,m \rightarrow \infty,
	\end{equation*}
	which implies $\widehat{U}_{\mods{t \wedge \tau_n}}^{0,n} \xrightarrow{n \to \infty} \widehat{U}_t^{0}$ in probability. 
	Note that, for $s \leq \sigma$, $\widehat{U}_{\mods{s \wedge \tau_n}}^{0,n} = \widehat{U}_{\mods{s \wedge \tau_m}}^{0,m}$ almost surely.
	Therefore, for $s \leq \tau_n$,
	\begin{equation*}
		\widehat{U}_{\mods{s \wedge \tau_n}}^{0,n} = \widehat{U}_{\mods{s \wedge \tau_n}}^{0,n+1} = \ldots = \widehat{U}_s^0 \quad \text{a.s.}, 
	\end{equation*}
	Furthermore, $\widehat{U}_t$ is a solution to~\cref{eqn:mean-mode-particles} in the sense of~\cite{StatisticsOfLiptser2001}, as for all $0 \leq s \leq t$,
	\begin{multline*}
		\bbP \left( \norm{ \widehat{U}^{0}_s - \widehat{U}^0_0 - \int_0^s a(s, \widehat{U}_s, \hbY_s) \md s - \int_0^s b(s, \widehat{U}_s, \hbY_s) \md [\widehat{\Wcal}_t, \widehat{\Vcal}_t, \widetilde{\Vcal}_t] } > 0\right) 
		\\
		\leq \bbP(s > \tau_n) \xrightarrow{n \to \infty} 0.
	\end{multline*}
	The uniqueness follows from a stopping argument. 
	Consider two distinct solutions $(\widehat{U}_t^{0}, \hbY_t)$ and $((\widehat{U}_t^{0})', \hbY_t')$.
	Since the solution to~\cref{eqn:stoch-modes-particles} is known to be unique, $\hbY_t = \hbY_t'$ almost surely, and so we focus on the sample means. 
	Define $\tau_n'$ the stopping time for the second solution analogously to $\tau_n$.
	It holds that for $0 \leq s \leq \min\{\tau_n, \tau_n'\}$, both solutions verify the same equations as $(\widehat{U}_s^{0,n}, \hbY_s^{n})$, and therefore by uniqueness \mods{of the truncated SDE} $(\widehat{U}_s^{0}, \hbY_s) = (\widehat{U}_s^{0,n}, \hbY_s^n) = ((\widehat{U}_s^{0})', \hbY_s')$ almost surely. 
	Note that $\tau_n' \rightarrow \infty$ as $n \rightarrow \infty$ since $(\widehat{U}_s)'$ is a.s. bounded too; the result follows by letting $n \to \infty$. 
\end{proof}
}

\subsection{Propagation of chaos for fully observed dissipative dynamics} \label{ssec:poc}

\modq{
We now turn to the propagation of chaos result; this section largely follows the structure of the proof in~\cite{OnTheStabilitDelMo2018}. 
A key component of the analysis consists in bounding the fluctuations of the sample mean and sample covariance characterised in~\cref{lem:red-mean-cov}, to which end we will make use of~\cref{lem:fost-lyap}. 
Notably, our framework allows us to bound the fluctuations in the reduced space (for the reduced covariance). 
As a first result, we obtain}
\begin{lemma} \label{lem:dEt-bound}
	\mods{The} norm of the error $\modt{G}_t = \norm{\hP_{\hbY_t} - M_{\bY_t}}^2_F$ satisfies
	\begin{align*} %\label{eqn:dEt-bound}
		\md \modt{G}_t &\leq 2 \lambda_{\max}(A + A^{\top}) \modt{G}_t \md t
		+ \frac{2}{\sqrt{{\rb{N}}-1}} \md \mathcal{E}_t \nonumber
				 \\& \quad+ \frac{2}{{\rb{N}}-1} \left ( \tr\left( (\Sigma_{\bU_t} + \hP_{\hbY_t} S_{\bU_t} \hP_{\hbY_t}) \hP_{\hbY_t}\right) + \tr(\Sigma_{\bU_t} + \hP_{\hbY_t} S_{\bU_t} \hP_{\hbY_t} ) \tr(\hP_{\hbY_t})\right) \md t,
	\end{align*}
	where $\md \mathcal{E}_t = \sum_{\modt{i,j = 1}}^R \mods{(E_t)_{ij}} \md (\Ncal_t)_{ij}$, with \mods{$E_t = \hP_{\hbY_t} - M_{\bY_t}$ and $\Ecal_t$ has} quadratic variation
	\begin{equation} \label{eqn:Ecal-quadr-var}
		\md [\Ecal_t, \Ecal_t ] = 4\tr\left(E_t \hP_{\hbY_t} E_t ( \Sigma_{\bU_t} + \hP_{\hbY_t} S_{\bU_t} \hP_{\hbY_t} ) \right) \modt{\md t}.
	\end{equation}
\end{lemma}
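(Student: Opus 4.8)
The plan is to track $E_t := \hM_{\hbY_t} - M_{\bY_t}$ and apply \Ito's formula to $G_t = \tr(E_t^2)$, using that $E_t$ is symmetric so $\norm{E_t}_F^2 = \tr(E_t^2)$. Subtracting the deterministic reduced Riccati equation~\eqref{eqn:ricc-red} from the perturbed Gram equation~\eqref{eqn:d-sample-Gram} and writing the nonlinear increment in the standard two-solution form $\hM_{\hbY_t}S_{\bU_t}\hM_{\hbY_t} - M_{\bY_t}S_{\bU_t}M_{\bY_t} = \hM_{\hbY_t}S_{\bU_t}E_t + E_tS_{\bU_t}M_{\bY_t}$ gives
\begin{equation*}
	\md E_t = \left[ (A_{\bU_t} - \hM_{\hbY_t}S_{\bU_t})E_t + E_t(A_{\bU_t}^\top - S_{\bU_t}M_{\bY_t}) \right]\md t + \tfrac{1}{\sqrt{P-1}}\md\Ncal_t .
\end{equation*}
\Ito's formula then produces three contributions to $\md G_t$: a drift $2\tr(E_t\,\md E_t^{\mathrm{drift}})$, a martingale $\tfrac{2}{\sqrt{P-1}}\tr(E_t\,\md\Ncal_t)$, and the quadratic-variation correction $\sum_{i,j}\md[(E_t)_{ij},(E_t)_{ij}]_t$.

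The martingale piece is, by symmetry of $E_t$ and $\Ncal_t$, exactly $\tfrac{2}{\sqrt{P-1}}\,\md\Ecal_t$ with $\md\Ecal_t = \sum_{i,j}(E_t)_{ij}\,\md(\Ncal_t)_{ij}$, matching the statement. For the drift I would separate the linear part: the $A_{\bU_t}$ terms combine by cyclicity into $2\tr\!\big((A_{\bU_t}+A_{\bU_t}^\top)E_t^2\big)$, and since $E_t^2 \succeq 0$ and $A_{\bU_t}+A_{\bU_t}^\top = \bU_t^\top(A+A^\top)\bU_t$, applying~\eqref{eqn:tr-lbda-ineq} together with the min--max bound $\lambda_{\max}(\bU_t^\top(A+A^\top)\bU_t)\le\lambda_{\max}(A+A^\top)$ (valid because $\bU_t\in\St(d,R)$ preserves norms) yields the leading term $2\lambda_{\max}(A+A^\top)G_t$.

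The remaining nonlinear part of the drift collapses, again by cyclicity and transposition invariance of the trace, to $-2\tr\!\big((\hM_{\hbY_t}+M_{\bY_t})S_{\bU_t}E_t^2\big)$. This is the delicate step and the one I expect to be the main obstacle: for a general observation operator this is a trace of three positive-semidefinite factors, which need not be nonnegative, so it cannot be discarded. Here I would invoke the full-observation assumption~\ref{eqn:full-obs}, under which $S_{\bU_t} = \bU_t^\top(\rho\bm{I}_d)\bU_t = \rho\bm{I}_R$; then $-2\tr\!\big((\hM_{\hbY_t}+M_{\bY_t})S_{\bU_t}E_t^2\big) = -2\rho\,\tr\!\big((\hM_{\hbY_t}+M_{\bY_t})E_t^2\big) = -2\rho\,\norm{E_t(\hM_{\hbY_t}+M_{\bY_t})^{1/2}}_F^2 \le 0$, which is dropped to obtain the inequality.

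Finally, for the quadratic-variation correction I would use the co-variation of the centered increments, $\tfrac{\md}{\md t}[\widehat{\Wcal}^{\star,(p)}_t(m),\widehat{\Wcal}^{\star,(p')}_t(m')]_t = (\delta_{pp'}-\tfrac1P)\delta_{mm'}$ and likewise for $\widehat{\Vcal}^\star$, with no cross terms by independence. The crucial simplification is that the $-\tfrac1P$ contributions vanish identically because $\sum_p\widehat{Y}^{i,(p)}_t = 0$ (zero sample mean), leaving only the diagonal $p=p'$ sums. Reading the coefficient rows of $\md\widehat{\Wcal}^{\star,(p)}$ and $\md\widehat{\Vcal}^{\star,(p)}$ off~\eqref{eqn:sample-gram-martingale} and substituting $\sum_p\widehat{Y}^{i,(p)}_t\widehat{Y}^{j,(p)}_t = (P-1)(\hM_{\hbY_t})_{ij}$, $(U^i_t)^\top\Sigma U^j_t = (\Sigma_{\bU_t})_{ij}$, and $(U^i_t)^\top\widehat{P}_t S\widehat{P}_t U^j_t = (\hM_{\hbY_t}S_{\bU_t}\hM_{\hbY_t})_{ij}$ (using $\widehat{P}_t=\bU_t\hM_{\hbY_t}\bU_t^\top$ and $\bU_t^\top U^j_t = e_j$), the factor $(P-1)$ cancels against the $\tfrac{1}{(P-1)^2}$ prefactor and the index sums reassemble into traces, giving exactly $\tfrac{2}{P-1}\big(\tr((\Sigma_{\bU_t}+\hM_{\hbY_t}S_{\bU_t}\hM_{\hbY_t})\hM_{\hbY_t}) + \tr(\Sigma_{\bU_t}+\hM_{\hbY_t}S_{\bU_t}\hM_{\hbY_t})\tr(\hM_{\hbY_t})\big)$. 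The companion identity~\eqref{eqn:Ecal-quadr-var} for $\md[\Ecal_t,\Ecal_t]$ comes from the identical co-variation computation, now weighting the fourfold index sum by $(E_t)_{ij}(E_t)_{kl}$ rather than $\delta_{ik}\delta_{jl}$; the only additional work there is the bookkeeping that contracts the resulting sum into $4\tr\!\big(E_t\hM_{\hbY_t}E_t(\Sigma_{\bU_t}+\hM_{\hbY_t}S_{\bU_t}\hM_{\hbY_t})\big)$.
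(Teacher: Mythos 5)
Your proposal is correct and follows essentially the same route as the paper's proof: the same error SDE for $E_t = \hM_{\hbY_t} - M_{\bY_t}$ (your splitting $\hM_{\hbY_t}S_{\bU_t}E_t + E_tS_{\bU_t}M_{\bY_t}$ versus the paper's $M_{\bY_t}S_{\bU_t}E_t + E_tS_{\bU_t}\hM_{\hbY_t}$ are equivalent after taking traces), the same \Ito\ expansion of $\norm{E_t}_F^2$, and the same covariation bookkeeping for the centered increments, including the cancellation of the $-\nicefrac{1}{P}$ terms via the zero sample mean and the contraction into the trace identities of~\eqref{eqn:Ecal-quadr-var}. The only real difference lies in the drift bound: you split off the nonlinear term and invoke~\ref{eqn:full-obs} to write $S_{\bU_t} = \rho\bm{I}_R$ so that the discarded term $-2\rho\,\tr((\hM_{\hbY_t}+M_{\bY_t})E_t^2)$ is manifestly nonpositive, whereas the paper folds everything into the single symmetric matrix $2(A_{\bU_t}+A_{\bU_t}^\top) - (M_{\bY_t}+\hM_{\hbY_t})S_{\bU_t} - S_{\bU_t}(M_{\bY_t}+\hM_{\hbY_t})$ and applies Weyl's inequality — a step that itself requires the anticommutator part to be positive semi-definite (a trace of three PSD factors can be negative in general), which again holds under the full-observation structure. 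Your explicit identification of this as the delicate step, and of~\ref{eqn:full-obs} as what rescues it, is accurate and pinpoints exactly where the paper's argument implicitly uses the same assumption.
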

\begin{proof}
	\modt{See~\mods{Appendix} \ref{app:b}.}
\end{proof}
We now prove uniform-in-time convergence of the sample reduced covariance to the true one. 
\begin{proposition} \label{prop:unif-bounds-cov}
	Assume~\ref{eqn:full-obs} and~\ref{eqn:lAsymneg}.
	Then, for $\modr{4R + 1 \leq 2(3n - 1)R + 1 \leq {\rb{N}}}$, it holds
	\begin{equation*}
		\sup_{t \geq 0} \E[\norm{\hP_{\hbY_t}  - M_{\bY_t}}^n_F]^{\frac{1}{n}} < \frac{c_n}{\sqrt{{\rb{N}}}},
	\end{equation*}
	\modt{for some $c_n > 0$ independent of ${\rb{N}}$.}
\end{proposition}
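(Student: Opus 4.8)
The plan is to apply the Foster--Lyapunov estimate of \Cref{lem:fost-lyap}(b) to the scalar process $G_t = \norm{\hM_{\hbY_t} - M_{\bY_t}}_F^2$, whose dynamics are supplied by \Cref{lem:dEt-bound}. Writing $E_t = \hM_{\hbY_t} - M_{\bY_t}$ (symmetric, since both Gram/sample-covariance matrices are), the drift of $G_t$ is bounded by $2\lambda_{\max}(A+A^\top)G_t + \beta_t$ with no $\sqrt{G_t}$ contribution, so in the notation of \Cref{lem:fost-lyap}(b) I would take $\alpha = \lambda_{\max}(A+A^\top)$ --- negative by~\ref{eqn:lAsymneg} --- together with $\tau_t \equiv 0$ and
\[
\beta_t = \frac{2}{P-1}\Big(\tr\big((\Sigma_{\bU_t} + \hM_{\hbY_t} S_{\bU_t}\hM_{\hbY_t})\hM_{\hbY_t}\big) + \tr(\Sigma_{\bU_t} + \hM_{\hbY_t} S_{\bU_t}\hM_{\hbY_t})\,\tr(\hM_{\hbY_t})\Big).
\]
The martingale part of $\md G_t$ is $\tfrac{2}{\sqrt{P-1}}\md\Ecal_t$; bounding its quadratic variation~\eqref{eqn:Ecal-quadr-var} through $\tr(E_t \hM_{\hbY_t} E_t K) \leq \lambda_{\max}(\hM_{\hbY_t})\,\lambda_{\max}(K)\,\norm{E_t}_F^2$ with $K = \Sigma_{\bU_t} + \hM_{\hbY_t} S_{\bU_t}\hM_{\hbY_t}$ casts it into the required form $\md[\mathcal{M}_t,\mathcal{M}_t]_t \leq G_t\,\gamma_t\,\md t$ with $\gamma_t = \tfrac{16}{P-1}\lambda_{\max}(\hM_{\hbY_t})\,\lambda_{\max}(K)$.

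Next I would reduce $\beta_t$ and $\gamma_t$ to polynomials in $\tr(\hM_{\hbY_t})$. Invoking~\ref{eqn:full-obs} gives $S_{\bU_t} = \rho\bm{I}_R$ (as $\bU_t \in \St(d,R)$) together with $\lambda_{\max}(\Sigma_{\bU_t}) \leq \lambda_{\max}(\Sigma)$ and $\tr(\Sigma_{\bU_t}) \leq R\lambda_{\max}(\Sigma)$; combined with the positive-semidefinite inequalities $\tr(\hM_{\hbY_t}^k) \leq \tr(\hM_{\hbY_t})^k$ and $\lambda_{\max}(\hM_{\hbY_t}) \leq \tr(\hM_{\hbY_t})$, both $\beta_t$ and $\gamma_t$ are dominated by $\tfrac{C}{P-1}\big(1 + \tr(\hM_{\hbY_t})^3\big)$ for a constant $C = C(\rho, R, \lambda_{\max}(\Sigma))$. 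Hence $\delta_{\beta,s}(n)$ and $\delta_{\gamma,s}$ are each controlled by $\tfrac{C}{P-1}\big(1 + \E[\tr(\hM_{\hbY_s})^{3n}]^{1/n}\big)$, while $\delta_{\tau,s}(2n)=0$.

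The crux is then the uniform-in-time control of the cubic moment $\sup_s \E[\tr(\hM_{\hbY_s})^{3n}]$, which is exactly what \Cref{lem:beta-gamma bounds} delivers once the ensemble is large enough to admit the $3n$-th moment: this is the role of the hypothesis $2(3n-1)R+1 \leq P$ (the lower bound $4R+1 \leq 2(3n-1)R+1$ merely encoding $n \geq 1$). With these bounds in place, \Cref{lem:fost-lyap}(b) yields, for all $t\geq 0$,
\[
\E[G_t^n]^{1/n} \leq e^{\alpha t}\E[G_0^n]^{1/n} + \int_0^t e^{\alpha(t-s)}\Big(\delta_{\beta,s}(n) + \tfrac{n-1}{2}\delta_{\gamma,s}\Big)\md s \leq \E[G_0^n]^{1/n} + \frac{C'}{|\alpha|\,(P-1)},
\]
using $\int_0^t e^{\alpha(t-s)}\md s \leq 1/|\alpha|$ and the uniform bound on the integrand.

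Finally I would control the initial term: since $\hbY_0$ is assembled from $P$ i.i.d.\ centred Gaussian draws with covariance $M_{\bY_0}$, the sample-covariance error obeys the standard Monte-Carlo estimate $\E[G_0^n]^{1/n} = \E[\norm{\hM_{\hbY_0} - M_{\bY_0}}_F^{2n}]^{1/n} = O(1/P)$, so the whole right-hand side is $O(1/P)$ uniformly in $t$. This gives $\sup_t \E[\norm{E_t}_F^{2n}]^{1/n} \leq c/(P-1)$; taking square roots and applying the Lyapunov moment-monotonicity inequality $\E[\norm{E_t}_F^n]^{1/n} \leq \E[\norm{E_t}_F^{2n}]^{1/(2n)}$ then produces the asserted $c_n/\sqrt{P}$ bound with $c_n$ independent of $P$. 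I expect the main obstacle to be not any isolated estimate but the bookkeeping that pins the particle threshold: matching the cubic growth of $\beta_t$ and $\gamma_t$ to the highest moment of $\tr(\hM_{\hbY_t})$ that \Cref{lem:beta-gamma bounds} can furnish is precisely what couples the admissible moment order $n$ to $P$, and verifying that the emerging prefactors genuinely carry the $1/(P-1)$ scaling uniformly in $t$.
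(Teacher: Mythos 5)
Your proposal is correct and follows essentially the same route as the paper's own proof: both apply the Foster--Lyapunov estimate of \Cref{lem:fost-lyap}(b) to $G_t = \norm{\hM_{\hbY_t}-M_{\bY_t}}_F^2$ with the drift and martingale bounds supplied by \Cref{lem:dEt-bound}, reduce $\beta_t$ and $\gamma_t$ to cubic polynomials in $\tr(\hM_{\hbY_t})$ whose uniform $n$-th moments come from \Cref{lem:beta-gamma bounds} (which is exactly where the threshold $2(3n-1)R+1\leq P$ enters), bound the initial error by the standard Monte--Carlo estimate for sample covariances, and conclude via the Jensen/Lyapunov inequality $\E[\norm{E_t}_F^{n}]^{1/n}\leq\E[\norm{E_t}_F^{2n}]^{1/(2n)}$. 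The only cosmetic deviations are your choice $\alpha=\lambda_{\max}(A+A^{\top})$ instead of the paper's $\tfrac{2}{3}\lambda_{\max}(A+A^{\top})$ and your use of $\lambda_{\max}$ rather than traces when bounding the quadratic variation of $\Ecal_t$, both inconsequential.
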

\begin{proof}
	For $t=0$, recall that $\widehat{\bY}_0 = \widehat{\bZ} - \E_{\rb{N}} [\widehat{\bZ}]$, where $\bZ^{(p)} \sim \Ncal(0, M_{\bY_0})$. 
	By~\cite[Lemma 10]{MultilevelEnseCherno2021}, when $n \geq 2$, $\E[\norm{\hP_{\hbY_0} - M_{\bY_0}}_F^{n}]^{\frac{1}{n}} \leq c_n / \sqrt{{\rb{N}}}$, where $c_n$ depends on some higher-order moment of $\bZ$, which are finite since \modt{$\bZ$ is} Gaussian.  For \mods{$n \in [1, 2)$}, it is immediate $\E[\norm{\hP_{\hbY_0} - M_{\bY_0}}^{\mods{n}}_F] \leq \E[\norm{\hP_{\hbY_0} - M_{\bY_0}}_F^{2}]^{\frac{\mods{n}}{2}} \mods{\leq c_{2}^n} / \sqrt{{\rb{N}}}$.
	
	Define $\modt{G}(\hP_{\hbY_t}, M_{\bY_t}) \equiv \modt{G}_t = \norm{\hP_{\hbY_t} - M_{\bY_t}}^2_F$.
	For $t > 0$, the assumptions of~\cref{lem:fost-lyap} \mods{(b)} are satisfied with $\alpha = \frac{2}{3}\lambda_{\max}(A + A^{\top})$, \mods{and} bounding
	\begin{multline*}
		\frac{2}{{\rb{N}}-1} \left ( \tr\left( (\Sigma_{\bU_t} + \hP_{\hbY_t} S_{\bU_t} \hP_{\hbY_t}) \hP_{\hbY_t}\right) + \tr(\Sigma_{\bU_t} + \hP_{\hbY_t} S_{\bU_t} \hP_{\hbY_t} ) \tr(\hP_{\hbY_t})\right) \\
		\leq 
		\frac{4}{{\rb{N}}-1} \left( \tr(\Sigma_{\bU_t}) \tr(\hP_{\hbY_t}) + \rho  \tr(\hP_{\hbY_t})^3 \right) 
		\mods{\eqcolon}
		\beta_t(\hP_{\hbY_t}, M_{\bY_t}).
	\end{multline*}
	\mods{
		For the martingale part, the quadratic variation \mods{of $\Ecal_t$} in~\cref{eqn:Ecal-quadr-var} is upper-bounded by
\begin{equation*} 
	\md [\Ecal_t, \Ecal_t]_t  \leq 4\tr(\hP_{\hbY_t}) \left ( \tr( \Sigma_{\bU_t}) + \rho \tr ( \hP_{\hbY_t} )^2 \right) \norm{E_t}^2_F \modt{\md t},
\end{equation*}
and therefore $\Mcal_t \mods{\coloneqq} \frac{2}{\sqrt{{\rb{N}}-1}} \Ecal_t$ has quadratic variation bounded by
}
	%\begin{equation*}
	%	\md [\Mcal_t, \Mcal_t]_t 
	%	\leq 
	%	\frac{16}{P-1} \tr(\hP_{\hbY_t}) \left ( \tr( \Sigma_{\bU_t}) + \rho \tr ( \hP_{\hbY_t} )^2 \right) \modt{G}_t \modt{\md t}
	%	= 
	%	\modt{G}_t \gamma_t\modt{\md t}
	%\end{equation*}
	\begin{equation*}
		\md [\Mcal_t, \Mcal_t]_t 
		\leq 
		\modt{G}_t \gamma_t\modt{\md t} \quad
		\text{ \mods{with} } \quad
		\mods{\gamma_t = \frac{16}{{\rb{N}}-1} \tr(\hP_{\hbY_t}) \left ( \tr( \Sigma_{\bU_t}) + \rho \tr ( \hP_{\hbY_t} )^2 \right)}.
	\end{equation*}
	\mods{By Hölder's inequality}	
	\begin{equation*}
		\beta_t^n \leq \frac{\mods{8^n}}{\modt{2}({\rb{N}}-1)^n} \left( \tr(\Sigma_{\bU_t})^n \tr(\hP_{\hbY_t})^n + \rho^n \tr(\hP_{\hbY_t})^{3n} \right) ,
	\end{equation*}
	and hence by~\cref{lem:beta-gamma bounds}, for $\modr{2(3n -1)R + 1 < {\rb{N}}}$, it holds $\sup_{t \geq 0} \E[\beta_t^{n}] = \modt{\frac{c^{\beta}_n}{{\rb{N}}^n}} < \infty$. 
	Using exactly the same argument, we also have $\sup_{t \geq 0} \E[\gamma_t^n] = \modt{\frac{c^{\gamma}_n}{{\rb{N}}^n}} < \infty$.
	Now applying~\cref{lem:fost-lyap}, \mods{the $L^n$-norm of $G_t$} is bounded by
	\begin{equation*} 
		\modt{\E[\modt{G}_t^n]^{\frac{1}{n}}} \leq 
		e^{\alpha t} \E[\modt{G^n_0}]^{\frac{1}{n}}
		+ 
		\int_{0}^t e^{\alpha (t-s)} \left[ \E[\beta_s^n]^{\frac{1}{n}} +  \frac{n-1}{2} \E[\gamma_s^n]^{\frac{1}{n}} \right] \md s 
		\leq 
		\frac{c^2_{2n}}{{\rb{N}}}
	\end{equation*}
	which yields the result. 
	%\mods{for $n$ even as $\E[{G}_t^n] = \E[\norm{\hP_{\hbY_t} - M_{\bY_t}}^{2n}_F]$.}
	%For $n$ odd, by Hölder's inequality,
	%\begin{equation*}
	%	\E[\norm{\hP_{\hbY_t} - M_{\bY_t}}^{2n+1}_{\modt{F}}] \leq \left( \E[\modt{G}_t^{n+1}] \right)^{\nicefrac{1}{2}} \left( \E[\modt{G}_t^{n}] \right)^{\nicefrac{1}{2}} \leq  \left( \frac{c_{2n+1}}{\sqrt{P}} \right)^{\modt{2n+1}}
	%\end{equation*}
	%with $c_{2n+1} = (c_{2n+2}^{n+\modt{1}} c_{2n}^{n})^{\nicefrac{1}{2n+1}}$.
\end{proof}
\modt{The constraint on the number of particles ${\rb{N}}$ to get $n$-th order bounds scales with the reduced dimension $R$, and not the state dimension $d$. 
	It arises as a sufficient condition to ensure the boundedness of $\E[\tr(\hP_{\hbY_t})^{3n}]$ in~\cref{lem:beta-gamma bounds}, in essence guaranteeing the stability of (some moments of) the stochastic fluctuations.
	Specifically, the proof involves the reduced covariance dimension ($R$) rather than the full covariance dimension ($d$) as in~\cite{OnTheStabilitDelMo2018}.}

	\rb{As a last preparatory statement, the following moment bound of the \dlr-\kbp\ (whose proof is housed in Appendix \ref{app:b}) will be useful. }
		\begin{lemma} \label{lem:bounded-moments-xdlr}
			Assume~\ref{eqn:full-obs} and~\ref{eqn:lAsymneg} \modt{hold}. 
			Let $\Xcal_t$ \modt{be} the solution to~\cref{eqn:dlr-vkb} with $\Xcal_0$ such that $\E[\norm{\Xcal_0}^n] < \infty$ \mods{for $n \geq 1$}.
			Then, $\Xcal_t$ has all moments bounded uniformly in time, i.e., 
			\begin{equation*}
				\sup_{t \geq 0} \E[\norm{\Xcal_t}^n] < \infty.
			\end{equation*}
		\end{lemma}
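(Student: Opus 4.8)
The plan is to exploit two structural facts that hold under the present assumptions. First, in the \dlr-\kbp~the covariance $P_t = \bU_t M_{\bY_t}\bU_t^{\top}$ (cf.~\cref{eqn:cov-lr}) is \emph{deterministic}, and by~\cref{lem:gram-ex-bounds} together with~\ref{eqn:lAsymneg} it is uniformly bounded in time: since $\bU_t\in\St(d,R)$ is orthonormal, $\norm{P_t}_F=\norm{M_{\bY_t}}_F\leq C$ for all $t$, while $\norm{\Pi_{\bU_t}}=1$. Second, under full observations~\ref{eqn:full-obs} one has $H=\bm I_d$, $\Gamma^{-1}=\rho\bm I_d$, $S=\rho\bm I_d$ and $\Goh=\rho^{-\oh}\bm I_d$; expanding $\rd\Zcal_t$ through~\cref{eqn:truth-noisy-observations} in~\cref{eqn:dlr-vkb}, I would rewrite the process as
\begin{equation*}
	\rd\Xcal_t = \big[(A-\rho P_t)\Xcal_t + g_t\big]\rd t + \Pi_{\bU_t}\Soh\,\rd\Wcal_t + \rho^{\oh}P_t\,(\rd\widetilde{\Vcal}_t - \rd\Vcal_t),
\end{equation*}
with forcing $g_t = \modt{f} + \rho P_t\Xcal_t^{\signal}$. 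The quadratic variation of $\Xcal_t$ is then $\tr(\Pi_{\bU_t}\Sigma\Pi_{\bU_t} + 2\rho P_t^2)\,\rd t \leq C_0\,\rd t$ uniformly in $t$, and since~\ref{eqn:lAsymneg} makes the true signal a dissipative Ornstein--Uhlenbeck process, $\sup_{t\geq0}\E[\norm{\Xcal_t^{\signal}}^m]<\infty$ for every $m$, whence $\sup_{t\geq0}\E[\norm{g_t}^m]<\infty$.

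It suffices to bound even integer moments, the general case following from Jensen's inequality $\E[\norm{\Xcal_t}^n]\leq\E[\norm{\Xcal_t}^{2p}]^{n/(2p)}$ for any integer $p$ with $2p\geq n$. I would apply \Ito's formula to $(\norm{\Xcal_t}^2)^p$. The drift contributes a dissipative term through $\langle x,(A-\rho P_t)x\rangle = \tfrac12 x^{\top}(A+A^{\top})x - \rho\, x^{\top}P_t x \leq -\kappa\norm{x}^2$, where $\kappa := -\tfrac12\lambda_{\max}(A+A^{\top})>0$ by~\ref{eqn:lAsymneg} and using that $P_t$ is positive semi-definite. The bounded diffusion produces the trace term above and a second-order contribution controlled by $\norm{\Xcal_t}^2$; absorbing all lower-order terms (including $2\langle\Xcal_t,g_t\rangle$) into a small multiple of $(\norm{\Xcal_t}^2)^p$ via the $\varepsilon$-Young inequality yields, after taking expectations,
\begin{equation*}
	\frac{\md}{\md t}\E[\norm{\Xcal_t}^{2p}] \leq -c\,\E[\norm{\Xcal_t}^{2p}] + C\Big(1 + \sup_{s\geq0}\E[\norm{g_s}^{2p}]\Big),
\end{equation*}
with $c>0$ and $C<\infty$ independent of $t$. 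A Gronwall argument then gives $\sup_{t\geq0}\E[\norm{\Xcal_t}^{2p}]<\infty$, which concludes.

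The main technical obstacle is handling the stochastic integral and the forcing uniformly in time. I would first localise with stopping times $\tau_N=\inf\{t:\norm{\Xcal_t}\geq N\}$, so that the martingale part of $(\norm{\Xcal_{t\wedge\tau_N}}^2)^p$ vanishes in expectation, derive the differential inequality for the stopped process, and pass to the limit $N\to\infty$ by monotone convergence; the finiteness of the moments on each compact interval needed to launch the localisation is guaranteed by the well-posedness and Gaussian character of the \dlr-\kbp~(\cref{lem:dlr-kbp-wp}). The genuinely delicate ingredient is the uniform-in-time (as opposed to finite-horizon) control, which rests on exactly two consequences of~\ref{eqn:lAsymneg}: the uniform boundedness of $P_t$ furnished by the reduced Riccati, and the uniform boundedness of the moments of the signal-dependent forcing $g_t$ (the expectation being understood over the law of $\Xcal_t^{\signal}$). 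Without dissipativity the Gronwall constants would grow in $t$ and only finite-horizon bounds would survive, which is why assumption~\ref{eqn:lAsymneg} is indispensable here.
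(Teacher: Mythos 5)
Your proposal is correct, and it reaches the conclusion by a genuinely different route than the paper. The paper applies It\^o's formula only to $\norm{\Xcal_t}^2$, bounds the drift by a term of the form $2\gamma\sqrt{G} + 3\alpha G + r$ with $\alpha = \tfrac{1}{6}\lambda_{\max}(A+A^{\top}) < 0$ and the quadratic variation by $G \cdot \tau_0$ with $\tau_0$ constant, and then invokes the Foster--Lyapunov moment lemma of Del Moral and Tugaut (\cref{lem:fost-lyap}, part (a)) as a black box to conclude that \emph{all} moments are uniformly bounded. You instead work directly with $(\norm{\Xcal_t}^2)^p$ for even moments, derive the differential inequality $\frac{\md}{\md t}\E[\norm{\Xcal_t}^{2p}] \leq -c\,\E[\norm{\Xcal_t}^{2p}] + C$, and close via an exponential-weight comparison argument with stopping-time localisation; in effect you re-prove, in this special case, exactly the content of \cref{lem:fost-lyap} that the paper reuses. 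Both arguments rest on the same three structural ingredients: assumption~\ref{eqn:full-obs} to obtain the damping term $-\rho\, x^{\top}P_t x \leq 0$ and the explicit noise structure, assumption~\ref{eqn:lAsymneg} together with \cref{lem:gram-ex-bounds} for the uniform-in-time boundedness of $P_t$ (your observation $\norm{P_t}_F = \norm{M_{\bY_t}}_F$ via orthonormality of $\bU_t$ is correct), and uniform boundedness of the signal-dependent forcing. The paper's route is shorter since \cref{lem:fost-lyap} is already imported for the propagation-of-chaos analysis; yours is self-contained and makes the localisation step (which the paper leaves implicit) explicit. One shared caveat: both proofs need $\sup_{t\geq 0}$ of (moments of) $\norm{\Xcal_t^{\signal}}$ to be finite, which sits slightly uneasily with the paper's convention of conditioning on a fixed realisation of the signal path; your closing remark that the expectation is taken over the law of $\Xcal_t^{\signal}$ handles this at least as carefully as the paper does.
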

		%\begin{proof}
		%	See Appendix~\ref{app:b}.
		%\end{proof}

	We now give the propagation of chaos result. Let $\hXcal_t^{(1)} = \modt{\widehat{U}}_t^0 + \sum_{i=1}^R \bU_t^i \hbY_t^{i,(1)}$ \modt{be} \mods{the first particle of} the solution to~\cref{eqn:mean-mode-particles,eqn:phys-modes-particles,eqn:stoch-modes-particles}, \mods{whose dynamics is driven by the Brownian motions $\{\hWcal_t^{(1)}, \hVcal_t^{(1)}\}$.} 
%	with $\hXcal_0^{(1)} = U^0_0 + \sum_{i=1}^R \bU^i_0 \hbY_0^{i,(1)}$ with Browian motions $\{\hWcal_t^{(1)}, \hVcal_t^{(1)}\}$.
Let $\Xcal_t^{(1)} = U_t^0 + \sum_{i=1}^R \bU_t^i \bY_t^{i}$ \modt{be} the solution to~\cref{eqn:kb-dlr-U0,eqn:kb-dlr-U,eqn:kb-dlr-Y}, with the same Brownian motions, and with $\Xcal_0^{(1)} = \hXcal^{(1)}_0$.
\begin{theorem}[Propagation of Chaos] \label{prop:prop-chaos}
	Under the same assumptions as in\modt{~\Cref{prop:unif-bounds-cov}}, \modr{under the condition} $\modr{4R+1 \leq 2(3n-1)R + 1 < {\rb{N}}}$, it holds
	\begin{equation*}
		\sup_{t \geq 0} \E [\norm{\hXcal_t^{(1)} - \Xcal_t^{(1)}}^n]^{\frac{1}{n}} < \frac{c_n}{\sqrt{{\rb{N}}}}.
	\end{equation*}
\end{theorem}
\begin{proof}
	Denote $\mathcal{D}_t = \hXcal_t^{(1)} - \Xcal_t^{(1)}$, and $\Delta P_t = \widehat{P}_t - P_t$.
	By~\cref{eq:dXhat} and~\cref{eqn:dlr-vkb}, it holds
	\begin{align*}
		\md  \Dcal_t &= A \Dcal_t \md t+ \Delta P_t S \Xcal_t^{\signal} \md t
		- \widehat{P}_t S \hXcal_t^{(1)} \md t + P_t S \Xcal^{(1)} \md t
		+ \Delta P_t H^{\top} \Gmoh \md (\widetilde{\Vcal}_t - \hVcal_t^{(1)} )			    
		\\	
			     &= A \Dcal_t \md t + \Delta P_t (S \Xcal_t^{\signal}
			     - S \Xcal_t^{(1)} ) \md t - \widehat{P}_t S \Dcal_t \md t
		+ \Delta P_t H^{\top} \Gmoh \md (\widetilde{\Vcal}_t - \hVcal_t^{(1)}) 
,
	\end{align*}
	whence it holds
	\begin{multline*}
		\md \norm{\Dcal_t}^2 = 2\langle \Dcal_t, \md \Dcal_t \rangle + \sum_{i=1}^{\modt{d}} \md [\Dcal_t(i), \Dcal_t(i)]_t
				     = \Dcal_t^{\top} (A + A^{\top}) \Dcal_t + 2\Dcal_t^{\top} \Delta P_t  \modt{(S}\Xcal_t^{\signal}  - S \Xcal_t^{(1)}) \md t \\
				     - \Dcal_t^{\top} \widehat{P}_t S \Dcal_t \md t 
				     - \Dcal_t^{\top} S \widehat{P}_t \Dcal_t \md t + 2 \tr(\Delta P_t S \Delta P_t) \md t + \md \Mcal_t ,
	\end{multline*}
%	\begin{align*}
%		\md \norm{\Dcal_t}^2 &= 2\langle \Dcal_t, \md \Dcal_t \rangle + \sum_{i=1}^{\modt{d}} \md [\Dcal_t(i), \Dcal_t(i)]_t
%		\\
%				     &= \Dcal_t^{\top} (A + A^{\top}) \Dcal_t + 2\Dcal_t^{\top} \Delta P_t  \modt{(S}\Xcal_t^{\signal}  - S \Xcal_t^{(1)}) \md t - \Dcal_t^{\top} \widehat{P}_t S \Dcal_t \md t 
%		\\
%				     &\quad \modt{- \Dcal_t^{\top} S \widehat{P}_t \Dcal_t \md t} + 2 \tr(\Delta P_t S \Delta P_t) \md t + \md \Mcal_t ,
%	\end{align*}
	with $\md \Mcal_t = 2\Dcal_t^{\top} \Delta P_t H^{\top} \Gmoh \md (\widetilde{\Vcal}_t - \hVcal_t^{(1)})$.
	Note that
	\begin{multline*}
		\Dcal_t^{\top} (A + A^{\top}) \Dcal_t 
		+ 2 \Dcal_t^{\top} \Delta P_t (S \Xcal_t^{\signal} -  S \Xcal_t^{(1)})  - \Dcal_t^{\top} \modt{ ( \widehat{P}_t S + S \widehat{P}_t) } \Dcal_t  + 2\tr(\Delta P_t S \Delta P_t)  
		\\
		\leq \lambda_{\max}(A + A^{\top}) \norm{\Dcal_t}^2 + 2 \norm{\Dcal_t} \norm{\Delta P_t} ( \norm{S \Xcal_t^{(1)}} + \norm{S \Xcal_t^{\signal}})  + 2 \rho \norm{\Delta P_t}^2_F
		\\
		\leq \frac{1}{2} \lambda_{\max}(A + A^{\top}) \norm{\Dcal_t}^2  + 2\left( \frac{ ( \norm{S \Xcal_t^{(1)}} + \norm{S \Xcal_t^{\signal}})^2}{|\lambda_{\max}(A + A^{\top})|} + \rho \right) \norm{\Delta P_t}^2_F
	\end{multline*}
	having used~\ref{eqn:full-obs}, $- \rho \Dcal_t^{\top} \widehat{P}_t \Dcal_t \leq 0$, and the $\varepsilon$-Young inequality in the last step.
	Furthermore, 
	%\begin{align*}
	%	\modt{\frac{\md}{\md t}} [\Mcal_t, \Mcal_t]_t &= \modt{8}\rho \Dcal_t^{\top} \Delta P_t^2 \Dcal_t \\
	%				 &\leq \modt{8} \rho \norm{\Delta P_t}^2_F \norm{\Dcal_t}^2, 
	%\end{align*}
	\begin{equation*}
		\frac{\md}{\md t} [\Mcal_t, \Mcal_t]_t = \modt{8}\rho \Dcal_t^{\top} \Delta P_t^2 \Dcal_t \leq 8 \rho \norm{\Delta P_t}^2_F \norm{\Dcal_t}^2, 
	\end{equation*}
	hence the assumptions of~\cref{lem:fost-lyap} \mods{(b)} are satisfied with 
	\begin{align*}
		%\tau_t &= \norm{\Delta P_t}_F (\norm{S \hXcal_t^{(1)}} + \norm{S \Xcal_t^{\signal}}) & 
		\alpha &= \frac{\lambda_{\max}(A + A^{\top})}{6} 
		       & \beta_t &= 2\left( \frac{ ( \norm{S \Xcal_t^{(1)}} + \norm{S \Xcal_t^{\signal}})^2}{|\lambda_{\max}(A + A^{\top})|} + \rho \right) \norm{\Delta P_t}^2_F
& \gamma_t &= \norm{\Delta P_t}^2_F.
			\end{align*}
			Since $\norm{\Delta P_t}^2_F = \norm{\hP_{\hbY_t} - M_{\bY_t}}^2_F$, 
			by\modt{~\Cref{prop:unif-bounds-cov}}, $\sup_{t \geq 0}\E[\gamma_t^n]^{\nicefrac{1}{n}} \leq c^{\gamma}_n/\sqrt{{\rb{N}}} < \infty$, \modt{for any $n$ satisfying the constraint in the statement. For that $n$, it also holds}
			\begin{equation*}
				\E[\beta_t^n]^{\nicefrac{1}{n}} \leq  2 \E\left[\left( \frac{ ( \norm{S \Xcal_t^{(1)}} + \norm{S \Xcal_t^{\signal}})^2}{|\lambda_{\max}(A + A^{\top})|} + \rho \right)^{2n}\right]^{\nicefrac{1}{2n}} \E \left[ \norm{\Delta {\rb{N}}_t}^{2n}_F \right]^{\nicefrac{1}{2n}} \leq \frac{c^{\beta}_{2n}}{\sqrt{{\rb{N}}}},
			\end{equation*}
			again using\modt{~\Cref{prop:unif-bounds-cov}}, and bounding the other term using~\cref{lem:bounded-moments-xdlr}.
			This yields the result.
\end{proof}

\section{Numerical Experiments} \label{sec:numerical-experiments}

This section numerically investigates the properties of the \dlr-\kbp\ and \dlr-\enkf\ frameworks established above.
Of particular interest are 
\begin{enumerate}[label=(\roman*)]
	\item the approximation/recovery properties of the \dlr-\kbp~with respect to the Full Order Model (\fom)-\kbp,
	\item the computational efficiency and accuracy of \dlr-\kbp,
	\item verifying the propagation of chaos property,
	\item assessing the computational gains of \dlr-\enkf~over \fom-\enkf, and
	\item for \dlr-\enkf, benchmarking the effect of rank and number of particles to showcase the interest of sampling a higher number of particles while maintaining a moderate rank.
\end{enumerate}
Points (i) - (iv) are explored in the first example with a toy model, while (v) is investigated in the second example \modt{of} an air pollution model.

\subsection{Linear advection}

The first example is inspired by a (stochastically perturbed) finite-difference-type discretisation of a one-dimensional advection-reaction PDE with forcing term, and can be viewed as a continuous analog of the linear advection in~\cite{ADetermSakov2008}.
The noiseless forward model is the solution over $[0,T]$ with $T = 1$ of
\begin{equation*} 
	\partial_t u = - \partial_x u - 0.1 u + 0.03,
\end{equation*}
on $D = [0,L]$ where $L=10$\mods{,} and periodic boundary conditions.
Introducing a finite-differences discretisation with $d = 100$~equispaced points $\{x_i = \frac{(i-1)L}{d}, i = 1, \ldots, d\}$ and adding a noise source yields an SDE like~\cref{eqn:truth-model-dynamics}, where $A$ is the upwind-finite difference operator associated to $- \partial_x - 0.1$, $\modt{f} = 0.03 \texttt{*ones}(d)$, and the noise source $\Sigma = \sigma \bm{I}_d$ for some $\sigma > 0$.
%This technically corresponds to a finite-difference discretisation of an SPDE with space-time white noise~\cite{IntroductionToLord2014}, but we omit the discussion of the SPDE well-posedness and focus on its discretisation, a (well-posed) finite-dimensional SDE.
We furthermore consider the case of full observations, hence $H = \bm{I}_d$ and $k = d$; finally, let $\Gamma = \gamma \bm{I}_d$ \modt{with} $\gamma = 2$. 
This setting allows to showcase the validity of the theoretical predictions of this work. 
The initial condition is a Gaussian, generated by $u_0 = \sin(\frac{2 \pi x}{L}) + \sum_{k=1}^{R_0} \frac{1}{i} \modt{\sin(\frac{2 \pi x k}{L})} \xi_k = U_0^0 + \bU_0 \bY_0^{\top}$ and $\xi_k \overset{i.i.d}{\sim} \Ncal(0,1)$, \modt{with} initial rank $R_{0} = 25$.
When not specified otherwise, an Euler-Maruyama scheme is used for the time-discretisation of SDEs and an explicit Euler scheme for ODEs, with $\D t = 10^{-4}$.

Firstly, we focus on the approximation properties of \dlr-\kbp; as stated in~\cref{ssub:error-approx}, \modt{these} strongly depend \ra{on the size of the orthogonal defect \rf{\eqref{eqn:mea-sigma}}}. 
As that property is hard to naturally enforce in the SDE setting, we instead tune $\sigma$ to values in~$[0, 10^{-3}, 10^{-1}, 0.5]$ and compare the means and covariances of the \fom-\kbp~to those of \dlr-\kbp~for varying $\sigma$. 
For a fixed rank $R = 15$, \cref{fig:mean-covs-errs} displays the evolution of errors between the \dlr-\kbp~and \fom-\kbp~means\mods{,} resp. \modt{covariances}. 
The plots show that the approximation properties of \dlr-\kbp~are negatively impacted by an increasing $\sigma$, but remain an interesting approximation tool for small $\sigma$. 

\begin{figure}[ht]
\centering
\begin{subfigure}{.5\textwidth}
  \centering
  \includegraphics[width=\linewidth]{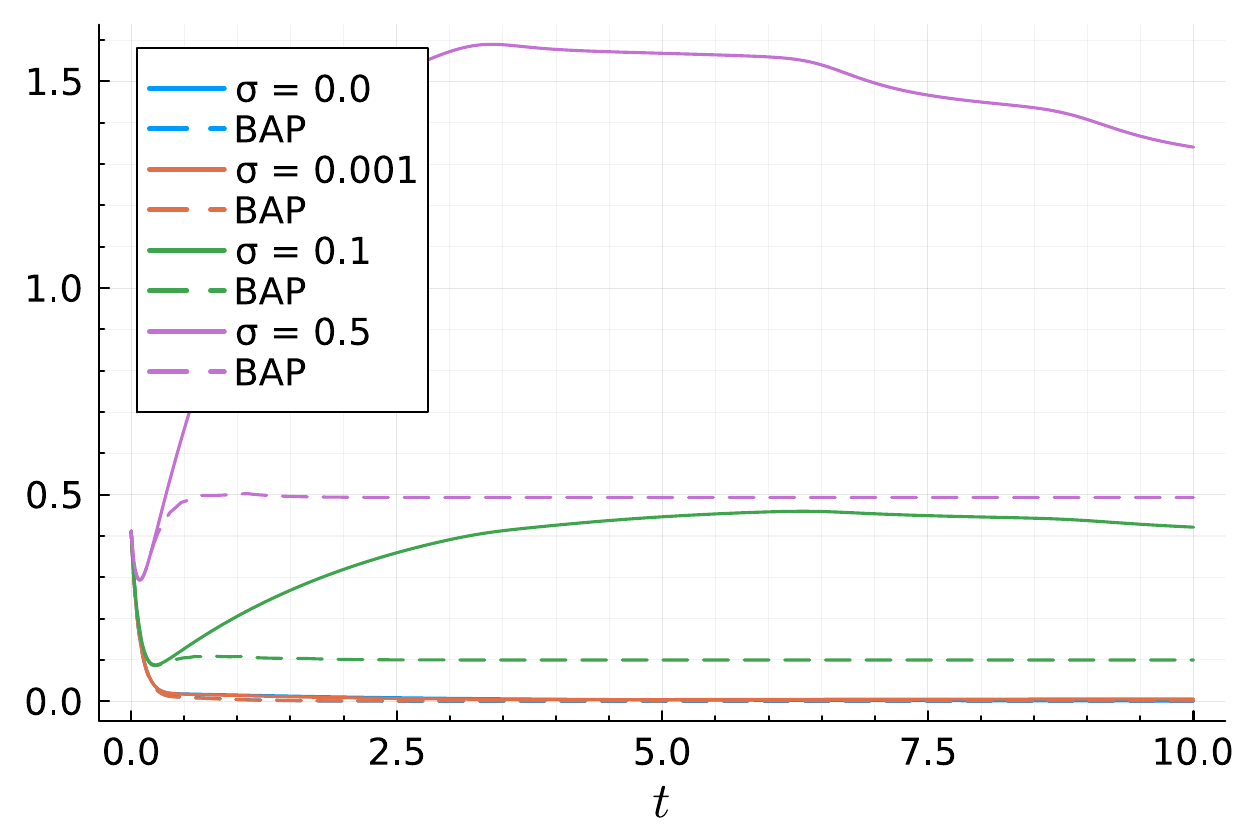}
  \caption{Errors $\norm{P_t^{\dlr} - P_t^{\fom}}_F$}
  \label{fig:covariance-errs}
\end{subfigure}%
\begin{subfigure}{.5\textwidth}
  \centering
  \includegraphics[width=\linewidth]{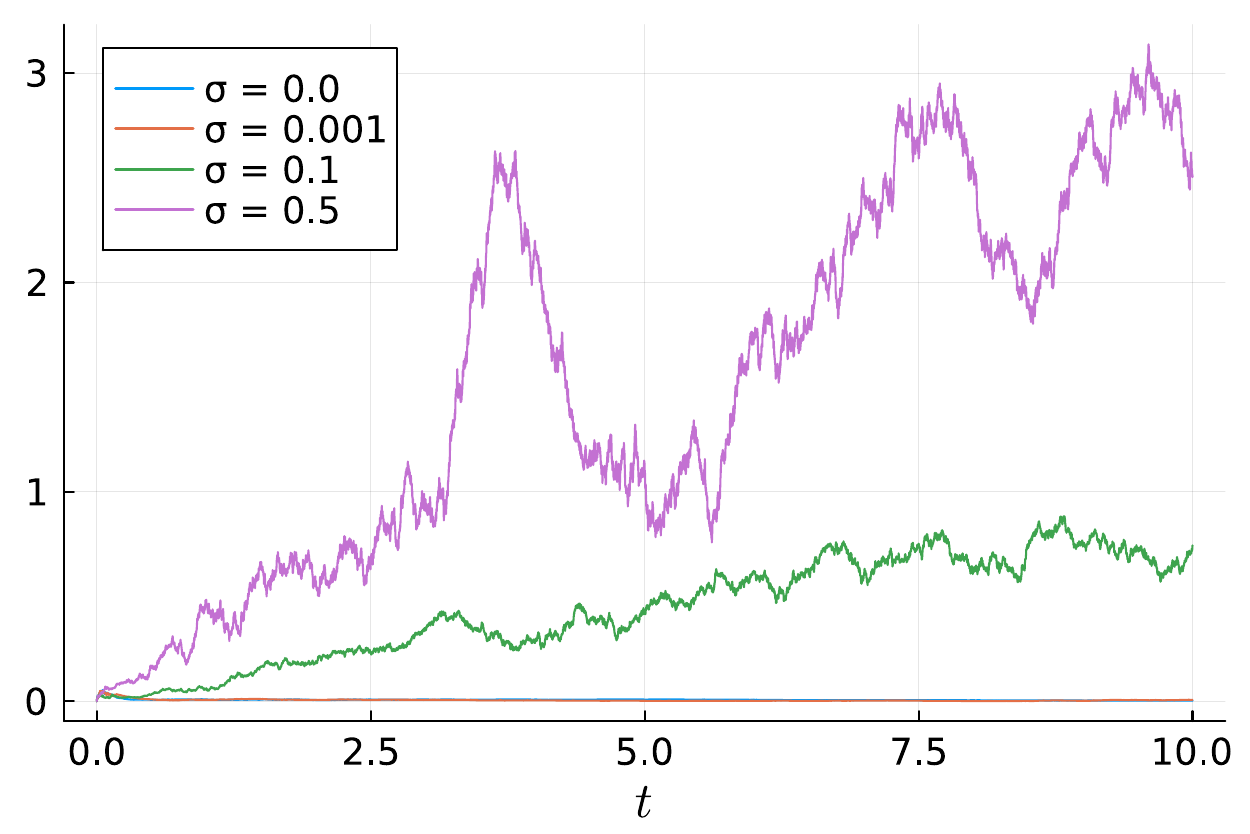}
  \caption{Errors $\norm{m_t^{\dlr} - m_t^{\fom}}$}
  \label{fig:mean-errs}
\end{subfigure}
\caption{Mean and covariance errors between \fom-\kbp~and \dlr-\kbp.
	The \dlr-\kbp~rank is $R = 15$.
	BAP = Best Approximation Possible = $\norm{ P_t^{\fom} - \Tcal_R (P_t^{\fom})}_F $.}
\label{fig:mean-covs-errs}
\end{figure}
Informed by \modt{these} first results, we hereafter fix $\sigma = 10^{-3}$ for the remaining computational experiments.
\modt{The performance of \dlr-\kbp~is assessed by comparing its mean and covariance with \modt{those of the} \fom-\kbp}
%\begin{align*}
%	\mathrm{CM}_t &= \sqrt{\norm{P_t^{\dlr} - P_t^{\fom}}^2 + \norm{m_t^{\dlr} - m_t^{\fom}}^2},
%\end{align*}
as well as directly measuring how it tracks the true signal via the (integrated) RMSE 
\begin{align*}
	\mathrm{RMSE}(m_t, P_t) &= \sqrt{ \norm{ m_{t} - \Xcal_{t}^{\signal}}^2 + \tr(P_t) },
				&  
	\mathrm{iRMSE}(m_t, P_t) &= \frac{\D t}{T} \sum_{i=1}^N \mathrm{RMSE}(m_{t_i}, P_{t_i}).
\end{align*}
Note that this definition of the RMSE also takes into account the ``uncertainty'' of the error via the covariance. 
\modt{The former metrics are shown in the two plots of~\cref{fig:kbp-v-dlr-kbp}.  
Both the errors in the mean and covariance of \dlr-\kbp~with respect to their \kbp~counterparts decrease with increasing rank.}
\modt{In turn, the plots in~\cref{fig:rmse-plots} display the signal-tracking abilities of \dlr-\kbp~for varying ranks.
\Cref{fig:rmses-ranks} showcases how, for a very small rank $R=2$, \dlr-\kbp~displays a poorer accuracy than the \fom~for the RMSE metric, but this discrepancy vanishes when using a higher rank.} 
This plot also verifies that using the (\dlr-)\kbp~process to track the signal is beneficial when compared to simply evolving the mean and covariance of the model without observations, which results in a \mods{much} higher RMSE \modt{(the dashed black line in~\cref{fig:rmses-ranks})}; \modt{for reference, we also include the norm of the signal (magenta dashed line in~\cref{fig:rmses-ranks}), confirming that the (\dlr-)\kbp~solution yield better approximations of the signal.
The eventual decrease of the signal norm and the plotted errors in time is in line with the dissipative nature of the dynamics.}
This is also verified using the iRMSE metric in~\cref{fig:ranks-accuracy-vs-compt}; it is apparent that the error in the mean rapidly reaches the same levels as those of the~\fom~for increasing rank, while retaining a computational advantage \modt{(the errors and simulation times were averaged over $100$ runs)}.

\begin{figure}[ht]
\centering
\begin{subfigure}{.49\textwidth}
  \centering
  \includegraphics[width=\linewidth]{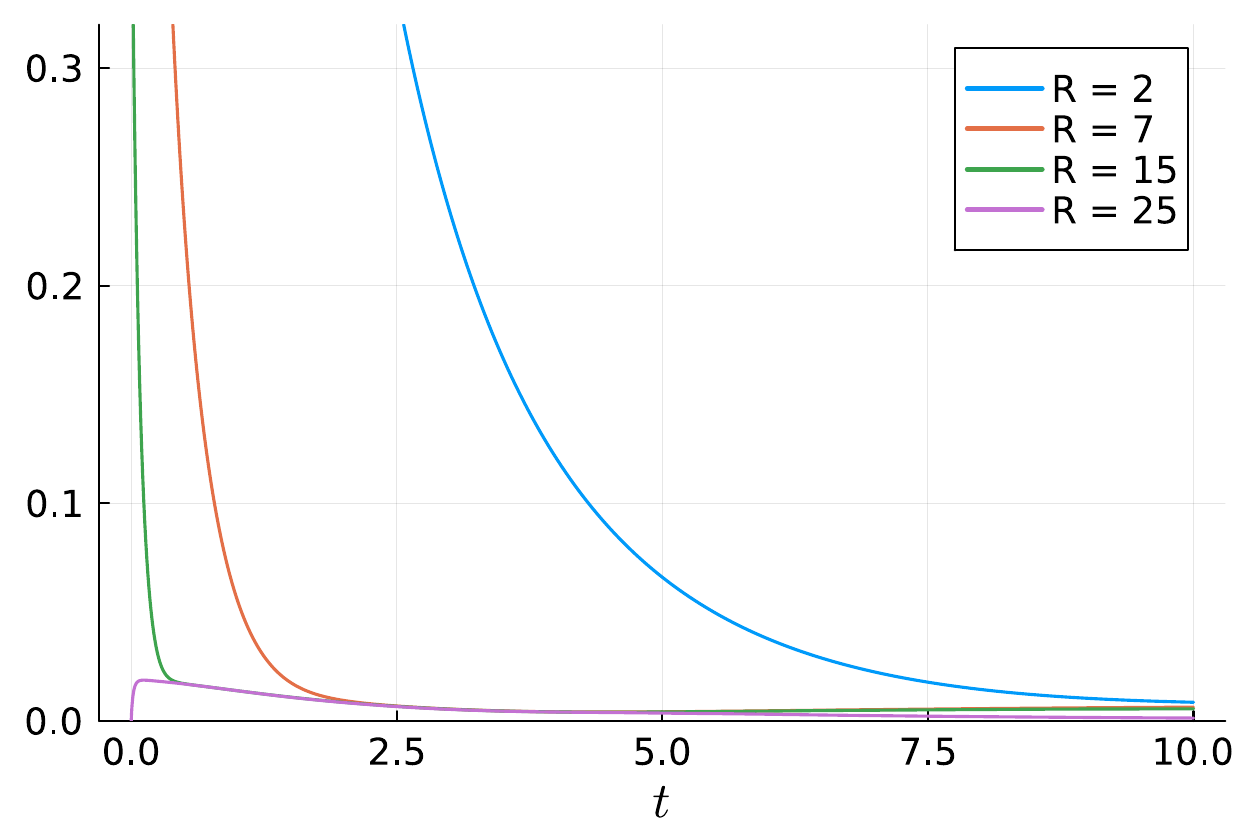}
  \caption{Errors $\norm{P_t^{\dlr} - P_t^{\fom}}_F$}
  \label{fig:cov-errs-varR}
\end{subfigure}
\begin{subfigure}{.49\linewidth}
  \centering
  \includegraphics[width=\linewidth]{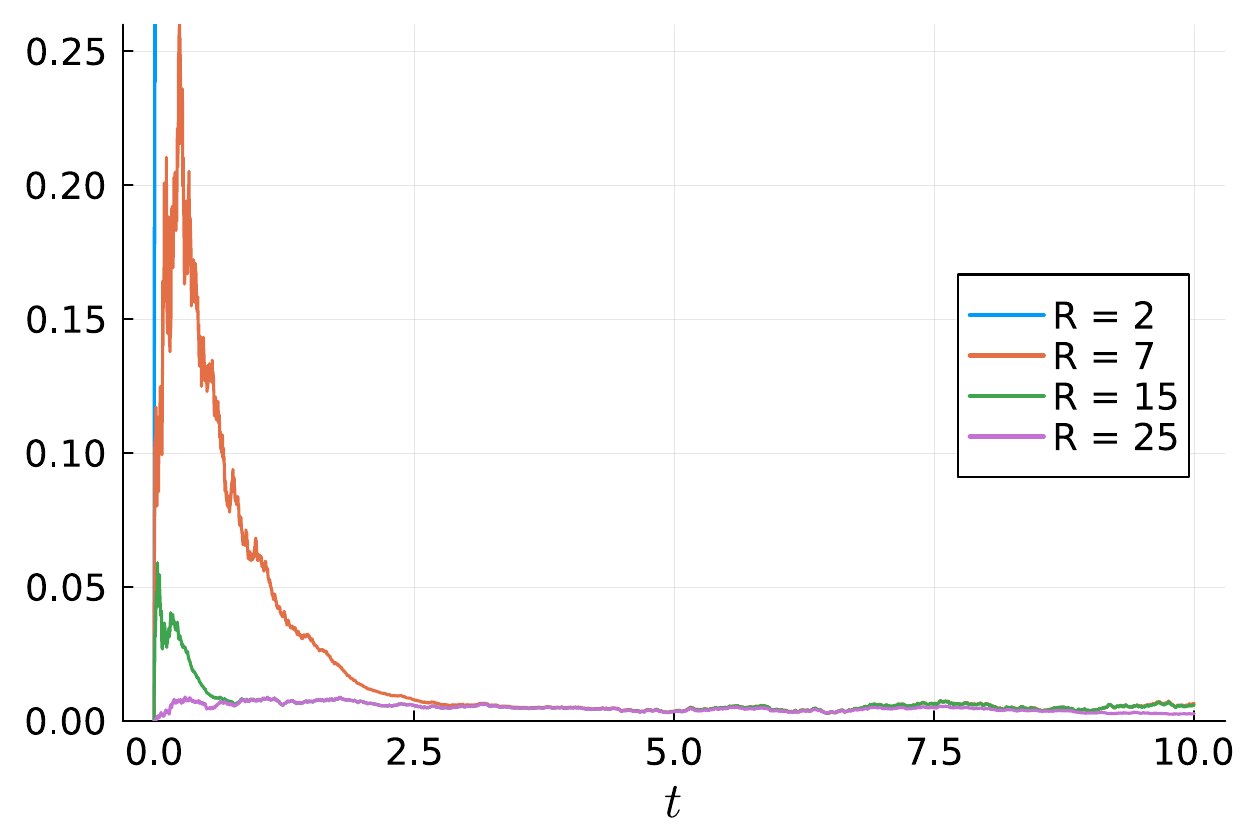}
  \caption{Errors $\norm{m_t^{\dlr} - m_t^{\fom}}$}
  \label{fig:mean-errs-varR}
\end{subfigure}%
\caption{\dlr-\kbp~approximation properties of \kbp~for varying $R$.}
\label{fig:kbp-v-dlr-kbp}
\end{figure}

% Varying ranks RMSE metrics 
\begin{figure}[ht]
\centering
\begin{subfigure}{.5\textwidth}
  \centering
  \includegraphics[width=\linewidth]{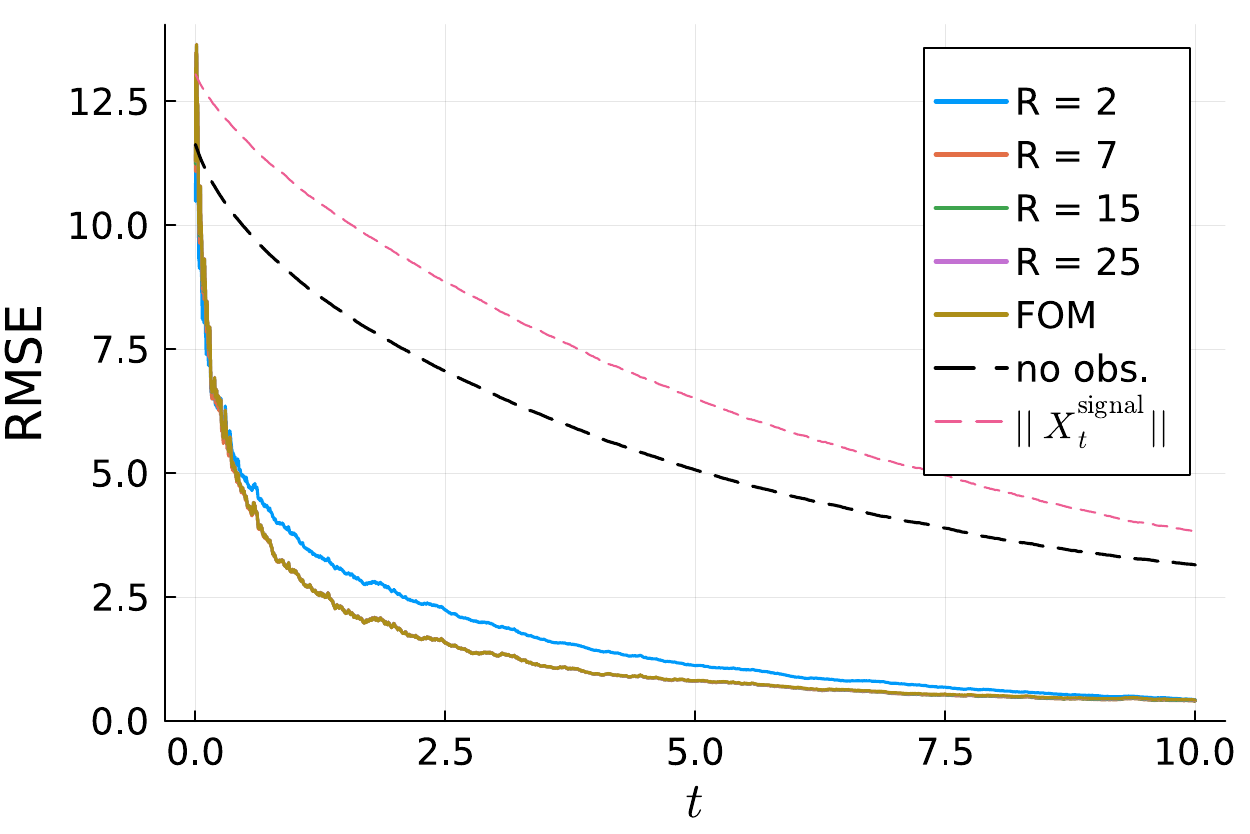}
  \caption{(\dlr)-\kbp~RMSEs}
  \label{fig:rmses-ranks}
\end{subfigure}%
\begin{subfigure}{.5\textwidth}
  \centering
  \includegraphics[width=\linewidth]{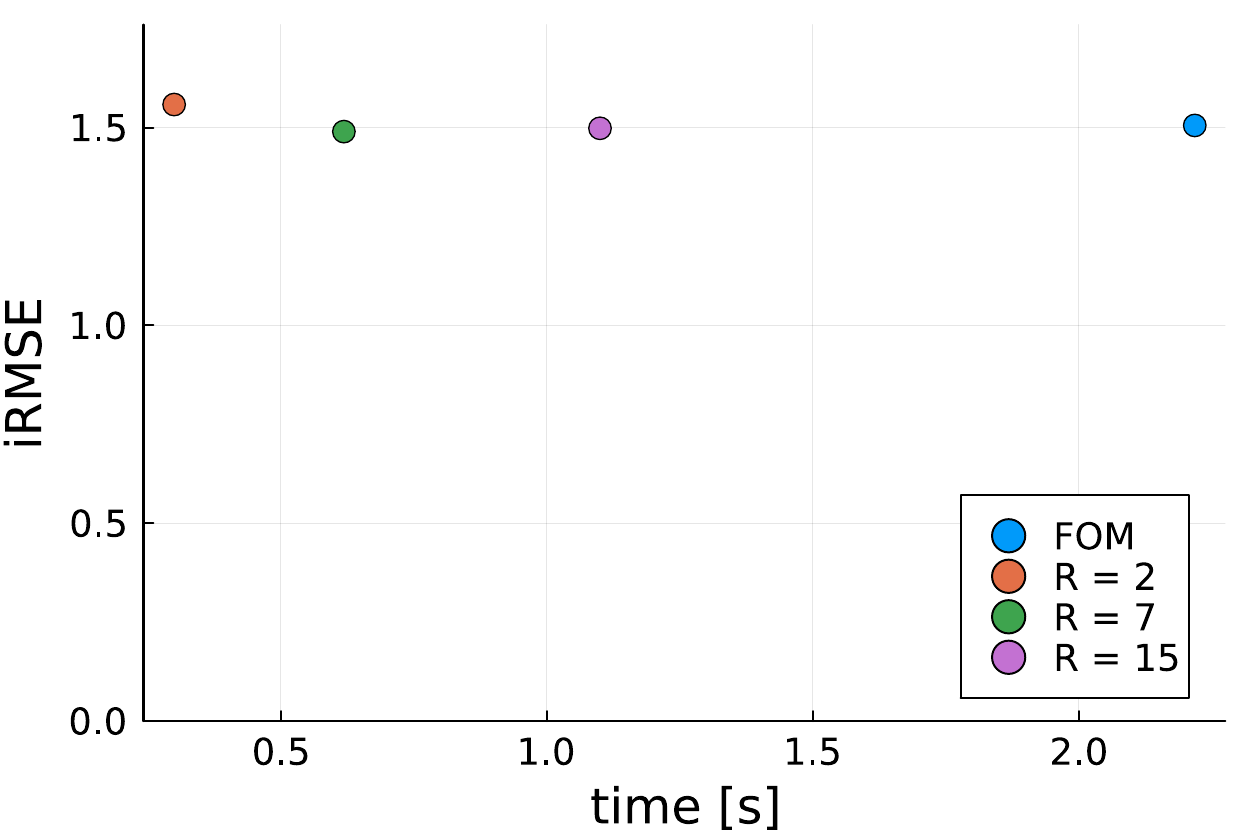}
  \caption{Integrated RMSEs v. computing cost [s]}
%  (averaged over $100$ runs).
  \label{fig:ranks-accuracy-vs-compt}
\end{subfigure}%
\caption{Signal tracking abilities of \dlr-\kbp~for varying $R$ via RMSE metrics.}
\label{fig:rmse-plots}
\end{figure}

Finally, the propagation of chaos result from~\cref{ssec:poc} is verified in~\cref{fig:poc-all}.
To this end, \rb{$N$ i.i.d copies $\Xcal_0^{(i)}$, $i=1, \ldots, N$,} are sampled from the original distribution (now with $R = R_{0} = 7$) and decomposed as $\widehat{\Xcal}_0 = \E_{\rb{N}} [\widehat{\Xcal}_0] + \bU_0 \widehat{\bY}_0^{\top}$. 
The \dlr-\enkf~particle system is simulated up to $T = 1$ and at that time we compare the sample reduced covariance matrix, sample mean \modt{(left)} and a chosen particle to their ``true'' \dlr-\kbp~counterparts \modt{(right)}. 
Those simulations are repeated $15$ times to approximate the expectations in{\modt{~\Cref{prop:unif-bounds-cov,prop:prop-chaos}} with $n = 2$. 
As can be seen in~\cref{fig:poc-gram+mean,fig:poc-particles}, the error has the expected $\mathcal{O}({\rb{N}}^{-\nicefrac{1}{2}})$ decay. 
\modt{Although not directly predicted by \mods{our} theory, we also verify that the error of the mean has the same error decay in~\cref{fig:poc-gram+mean}.}

\begin{figure}[ht]
\centering
\begin{subfigure}{.5\textwidth}
  \centering
  \includegraphics[width=\linewidth]{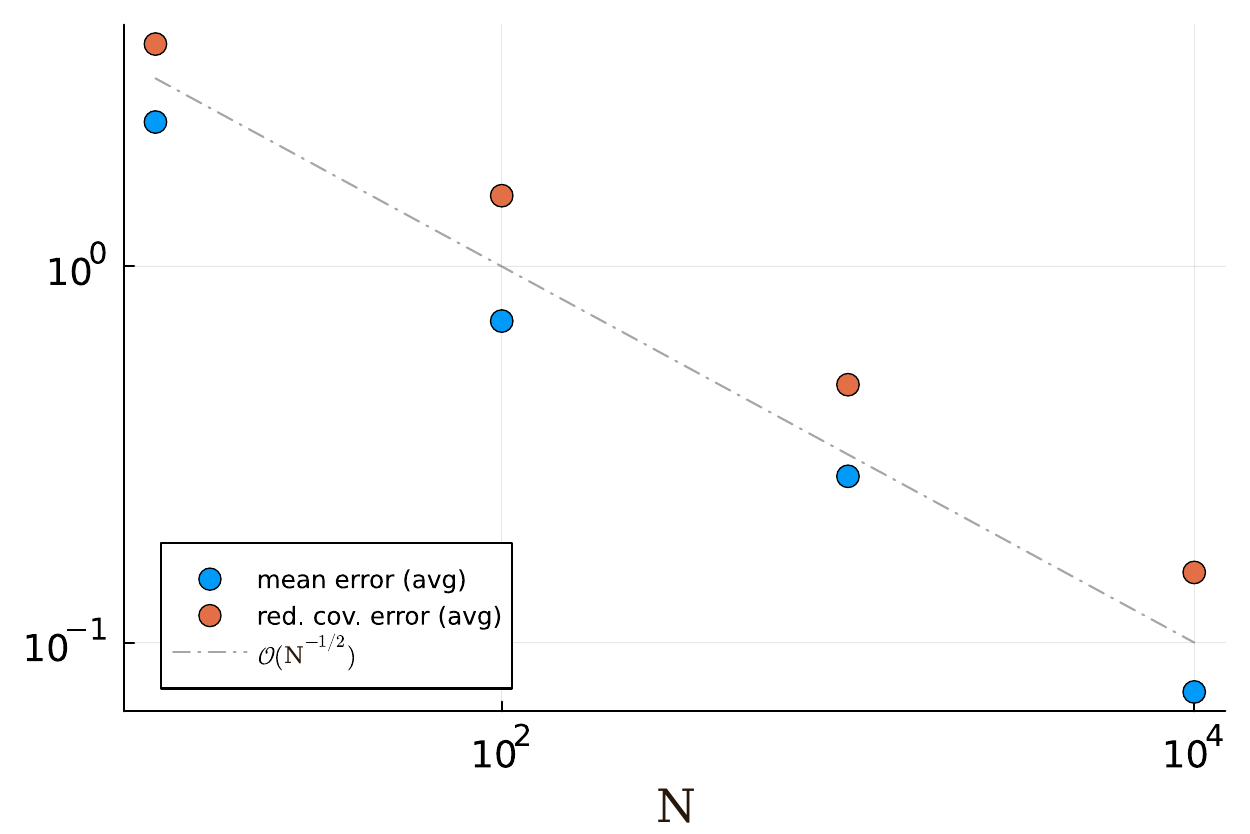}
  \caption{Error reduced covariance + mean}
  \label{fig:poc-gram+mean}
\end{subfigure}%
\begin{subfigure}{.5\textwidth}
  \centering
  \includegraphics[width=\linewidth]{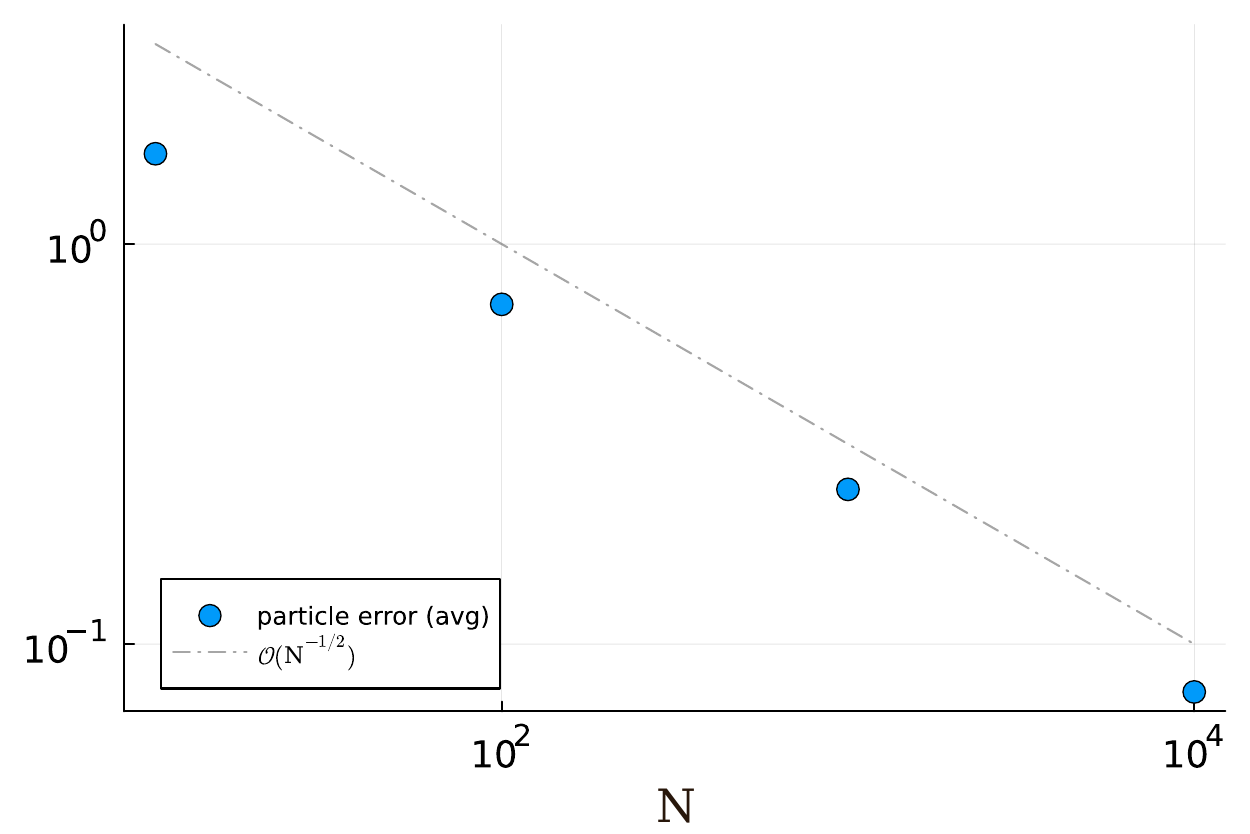}
  \caption{Error particle (propagation of chaos)}
  \label{fig:poc-particles}
\end{subfigure}
\caption{\dlr-\enkf~approximation error of \dlr-\kbp~with ${\rb{N}}$ particles}
\label{fig:poc-all}
\end{figure}

\subsection{Air pollution model}

This section investigates the performance of \dlr-\enkf~with respect to \fom-\enkf, specifically the relation between the number of particles, RMSE and rank. 
The model is given by the Finite Element discretisation of a $2D$ stochastic advection-diffusion model over the time-interval $[0,1]$ and either full or partial observations.
It can be viewed as a simple model for the passive transport of a pollutant. 
On the domain $D = [0,1]^2$, let $V = H^1_{\mathrm{per}}(D) = \{u(x_1,x_2) \in H^1(D) \st u(0,x_2) = u(1,x_2), x_2 \in [0,1]\}$ and $H = L^2(D)$. 
We consider the signal 
\begin{equation*}
	d \mods{u_t^{\signal}} = (a \Delta \mods{u_t^{\signal}} + \bm{b} \cdot \nabla \mods{u_t^{\signal}}) \md t + \sigma^{\nicefrac{1}{2}} \md \widetilde{W}_t,
\end{equation*}
with $0$-Neumann boundary conditions on $\{(x_1,0), (x_1,1) \st x_1 \in [0,1]\}$ and periodic boundary conditions \modt{on} $(0,x_2) = (1,x_2)$ for $x_2 \in [0,1]$ (\modt{$\mods{u_t^{\signal}}(0\modr{,} x_2) = \mods{u_t^{\signal}}(1, x_2)$ and $\partial_{x_1} \mods{u_t^{\signal}}(0, x_2) = \partial_{x_1} \mods{u_t^{\signal}}(1, x_2)$}), 
the coefficients are $a = 10^{-1}, \bm{b} = (1, 0)^{\top}, \sigma = 10^{-5}$ and $\widetilde{W}_t$ is a finite-dimensional Q-Wiener process that will be specified below.
The initial condition has distribution 
\begin{equation*}
	u_0(x) = \exp(- (x_1 - 0.5)^2 - (x_2 - 0.5)^2 ) + \sum_{i=1}^{R_{0}} \frac{1}{i^2} \sin(i \pi x_1) \cos(i \pi x_2) \xi_i, \quad \xi_i \overset{iid}{\sim} \Ncal(0,1)
\end{equation*}
where $R_{0} = 12$. 
This SPDE admits a solution in the sense of~\cite[Theorem 4.2.4]{StochasticPartLiuW2015}, i.e., a continuous $H$-valued process whose equivalence class admits a $V$-valued version -- see~\cite[Definition 4.2.1]{IntroductionToLord2014}. 

For the discretisation, let $\mathcal{T}_h$ \mods{be} a \modt{quadrangular} mesh of $D$ with nodes at $21$ equispaced points along both dimensions.
The Galerkin approximation space is $V_h = \mathbb{P}_1^C(\mathcal{T}_h)$ \modt{the space of piecewise continuous bilinear polynomials over the mesh $\Tcal_h$} with $\mathrm{dim}(V_h) = 420 = d$, with \modt{Lagrangian} basis $\{\phi_i\}_{i=1}^{d}$.
For simplicity, the Q-Wiener process is $\widetilde{W}(t,x) = \sum_{i=1}^{d} \widetilde{\Wcal}^i_t \phi_i(x)$, where $\widetilde{\Wcal}^i_t$ are independent standard $1$-dimensional Brownian motions.
The corresponding assembled system is then 
\begin{equation*}
	M \md \Xcal_t^{\signal} = A \Xcal_t^{\signal} \md t + \sigma^{\nicefrac{1}{2}} M \md \widetilde{\Wcal}_t,
\end{equation*}
where $\Xcal_t^{\signal}$ \modt{is the vector of nodal} degrees of freedom, $A$ contains the assembled diffusion and advection terms, $M$ is the mass matrix and $\widetilde{\Wcal}_t = [\widetilde{\Wcal}_t^1, \ldots, \widetilde{\Wcal}_t^d]^{\top}$.
A semi-implicit Euler-Maruyama scheme~\cite{IntroductionToLord2014} is used for time-stepping, i.e. $(M - \Delta t A)\Xcal_{n+1}^{\signal} = \sigma^{\nicefrac{1}{2}} M \D \widetilde{\Wcal}_{n}$, where $\Delta t = 10^{-2}$.
%Then, consider the (discretised) stochastic heat equation in weak form 
%\begin{multline*}
%	(\md \Xcal^{\signal}_{t,h}, v_h)_{L^2(D)} = (a \nabla \Xcal^{\signal}_{t,h}, \nabla v_h)_{L^2(D)}  \md t + (\bm{b} \cdot \nabla \Xcal^{\signal}_{t,h}, v_h)_{L^2(D)}  \md t 
%	\\
%	+ (c \md \Wcal_{h}(t), v_h)_{L^2(D)}  \quad \forall v_h \in V_h.
%\end{multline*}
Two (bounded linear) observation operators are considered, 
\begin{align*}
	h^{\mathrm{full}} : V \to V &&& h^{\mathrm{part}} : V \to \R^k,
	\\
	h^{\mathrm{full}}(\mods{u}) = \mods{u} &&&  h^{\mathrm{part}}(\mods{u})(\ell) = \int_{D} \mods{u} \chi_{\ell}, \quad 1 \leq \ell \leq k
\end{align*}
where $k = 25$ and $\chi_{\ell}$ is the indicator function of a prescribed square on the domain (see~\cref{fig:indicators}).
%\begin{align*}
%	\Hcal^{\mathrm{full}}(\Xcal_{t,h}) &= \Xcal_t \in \R^d, && \Hcal^{\mathrm{part}} : V_h \to \R^k, 
%	\\
%					   &&&  \Hcal^{\mathrm{part}}(\Xcal_{t,h})(\ell) = \int_{D} \Xcal_{t,h} \chi_{\ell}, \quad 1 \leq \ell \leq k = 25
%\end{align*}
%and where $\chi_{\ell}$ is the indicator function of a prescribed square on the domain (see~\cref{fig:indicators}).
 \begin{figure}[ht]
	 \centering
	 \includegraphics[width=.25\textwidth]{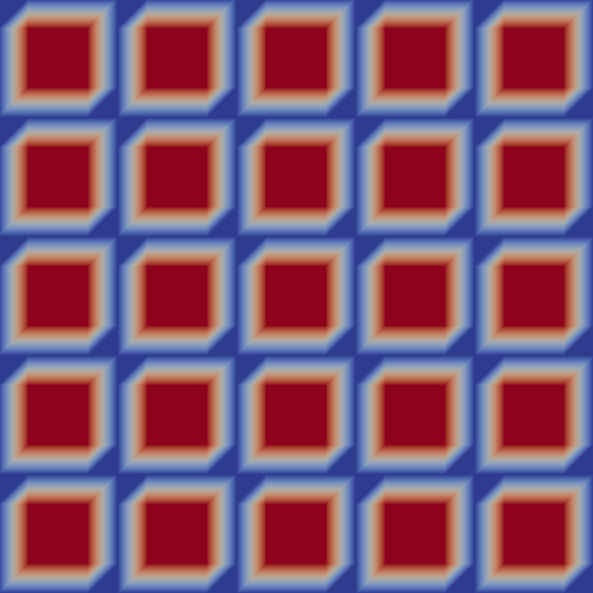}
	 \caption{Each indicator function $\chi_\ell$ is associated to one of the red squares.}
	 \label{fig:indicators}
\end{figure}
Denoting the observation (Hilbert) space by $K$ and letting $\Gamma = \gamma I \in \Lcal(K,K)$ with $\gamma = 10^{-2}$, the observation process is given by
\begin{equation*}
	\md z_t = h^{\star}(\mods{u_t^{\\signal}}) \md t + \gamma^{\nicefrac{1}{2}} \md \widetilde{V}_t^{\star},
\end{equation*}
where if $\star = \mathrm{full}$, $\widetilde{V}_t$ is a Q-Brownian motion on the finite element basis (like $\widetilde{W}_t$), and if $\star = \mathrm{part}$, then the observation process is a finite-dimensional SDE and $\widetilde{V}_t = \widetilde{\Vcal}_t$ a $25$-dimensional Brownian motion.

Note that for either observation operator $h : H \to K$, its transpose $h^{\top} : K \to H$ verifies $\langle h x, y \rangle_K = \langle x, h^{\top} y\rangle_H$ for $x \in H$ and $y \in K$.

For ${\rb{N}}$ particles $ \{ \mods{u}^{(p)}_t \}_{p=1}^{\rb{N}}$, denote the sample mean $\bar{\mods{u}}_t = {\rb{N}}^{-1} \sum_{p=1}^{\rb{N}} \mods{u}_t^{(p)}$ and the sample covariance $\rb{\widehat{P}_t}(\rb{u_t})$ by $\rb{\widehat{P}_t(u_t)} v = ({\rb{N}}-1)^{-1}\sum_{i=1}^{\rb{N}} e_t^{(p)} \langle  e_t^{(p)}, v \rangle_H$ for $v \in H$, where $e_t^{(p)} = \mods{u}_t^{(p)} - \bar{\mods{u}}_t$.
Similarly to~\cite{WellPosedKelly2014}, we consider the Ensemble Kalman system with ${\rb{N}}$ particles
\begin{equation*}
	d \mods{u}_t^{(p)} = (a \Delta \mods{u}_t^{(p)} + \bm{b} \cdot \nabla \mods{u}_t^{(p)}) \md t + \sigma^{\nicefrac{1}{2}} \md \widehat{W}_t^{(p)} + \frac{1}{\gamma} \widehat{P}(\mods{u}) h^{\top}  \left[ \md z_t - h(\mods{u}_t^{(p)}) \md t - \gamma^{\nicefrac{1}{2}} \md \widehat{V}_t^{(p)} \right]
\end{equation*}
The FEM discretisation in the full observations case therefore yields the following SDE in the degrees of freedom
\begin{equation*}
	M \md \Xcal^{(p)}_t = A \Xcal_t^{(p)} \md t + \sigma^{\nicefrac{1}{2}} M \md \widehat{\Wcal}^{(p)}_t + \frac{1}{\gamma} M \widehat{P}_t M \left[ \md \Zcal_t - (\Xcal_t^{(p)} \md t + \gamma^{\nicefrac{1}{2}}  \md \widehat{\Vcal}^{(p)}_t) \right],
\end{equation*}
whereas in the case of partial observations,
\begin{equation*}
	M \md \Xcal^{(p)}_t = A \Xcal_t^{(p)} \md t + \sigma^{\nicefrac{1}{2}} M \md \widehat{\Wcal}^{(p)}_t + \frac{1}{\gamma} M \widehat{P}_t H^{\top}  \left[ \md \Zcal_t - (H \Xcal_t^{(p)} \md t + \gamma^{\nicefrac{1}{2}}  \md \widehat{\Vcal}^{(p)}_t) \right],
\end{equation*}
where $H \Xcal_t = h(\mods{u}_{t})$ for $\mods{u}_{t} \in V_h$ and $\Xcal_t$ are the degrees of freedom of $\mods{u}_{t}$.
%To obtain the second system, note that $\widehat{P}(x) \left( h^{\top}y \right) = ({\rb{N}}-1)^{-1}\sum_{i=1}^{\rb{N}} e_t^{(p)} \langle  h(e_t^{(p)}), y \rangle_{\R^k} = ({\rb{N}}-1)^{-1}\sum_{i=1}^{\rb{N}} e_t^{(p)} \mathrm{dof}(e_t^{(p)}) H^{\top} y$.
The FEM discretisations are again solved with the semi-implicit scheme $(M - \D t A) \Xcal_{n+1} = \mathrm{rhs}$.

For the time-integration of \dlr-\enkf, we propose the following BUG-like~\cite{ARankAdaptiveCeruti2022,AnUnconventionCeruti2022} time-integration scheme. 
It approximates the physical space at time $t_{n+1}$, creates an augmented physical basis, and projects the original dynamics onto that augmented space. 
This yields an update equation for the (augmented) stochastic modes, which are then truncated back to rank $R$.
Note that while this scheme shares strong similarities with schemes of the BUG class, it is \emph{not} properly speaking a BUG scheme as it does not compute the update of the second base separately from the update of the singular values. 
Specifically because of that, the scheme is well-suited to the DO framework we \modt{have used in this work}.
Below we detail the equations for the case of partial observations, those for full observations being analogous.
\begin{enumerate}
        \item Mean update
                \begin{multline*}
			(M - \D t A) U_{n+1}^0 =  M U_n^0 
			+ \sigma^{\nicefrac{1}{2}} M \Pi_{\bU_n}  \E_{\rb{N}} [\D \hWcal_n]  
                        \\
			+ M \widehat{P}_t H^{\top} \Gamma^{-1}  
			\left[ \D \Zcal_n 
                        - H U^0_n \D t
                        + \Goh \E_{\rb{N}} [ \D \widehat{\Vcal}_n] \right]
                \end{multline*}
	%\item Orthonormalise $\overline{\bY}_n R_1 = \texttt{qr}(\hbY_n)$ such that $\E_{\rb{N}}[\overline{\bY}_n^i \overline{\bY}_n^j] = \delta_{ij}$.
	\item Physical modes update. 
                \begin{equation*}
			(M - \D t A)\widetilde{\bU}_{n+1} = M \bU_n - M \widehat{P}_t S \bU_n \D t  %+ M \Pi_{\bU_n} \Soh  \E_P [\D \hWcal_n^{\star} {\overline{\bY}}_n]  \\
					%+      M \widehat{P}_t H^{\top} \Gmoh \E_P [ \D \widehat{\Vcal}^{\star}_n {\overline{\bY}}_n] 
                \end{equation*}
	\item Orthonormalise $\overline{\bU}_{n+1} = \texttt{qr}([\bU_n, \widetilde{\bU}_{n+1}])$ such that $\overline{\bU}_{n+1}^{\top} M \overline{\bU}_{n+1} = \bm{I}_{2R}$.
        \item Stochastic mode update. 
		Define $\widetilde{\bY}_{n}^{\top} = \overline{\bU}_{n+1}^{\top} M \bU_n \hbY_n^{\top}$ and solve
                \begin{multline*}
			(I - \D t \overline{\bU}_{n+1}^{\top} A \overline{\bU}_{n+1})\widetilde{\bY}_{n+1}^{\top} = 
                        \widetilde{\bY}_{n}^{\top} 
			- 
                        \overline{\bU}_{n+1}^{\top} M \widehat{P}_n S \Xcal_n^{\star} \D t 
                        + \overline{\bU}_{n+1}^{\top} \Soh \D\Wcal_n^{\star}
                        \\
                        - \overline{\bU}_{n+1}^{\top} M \widehat{P}_{n} H^{\top} \Gmoh \D \Vcal_n^{\star}.
                \end{multline*}
        \item Truncation
                \begin{align*}
                        \mathcal{T}_R(\widetilde{\bY}_{n+1}) = U_R S_R V_R^{\top} && \bU_{n+1} = \overline{\bU}_{n+1} U_R && \hbY_{n+1} = V_R S_R.
                \end{align*}
		(where $\E_{\rb{N}}[V_R^{\top} V_R] = \bm{I}_{2R}$.)
\end{enumerate}

To verify \mods{that} the integrator is consistent, we consider the noiseless forward model case $\sigma = 0$ (and full observations). 
In that setting, the \fom-\enkf~evolves a rank-$R_{0}$ solution at all times. 
We set ${\rb{N}} = 425$, hence ${\rb{N}} > \mathrm{dim}(V_h)$, making it possible (in the general case) for \enkf(${\rb{N}}$) to have a full-rank sample covariance. 
Given an initial condition, we simulate \fom-\enkf(${\rb{N}}$) and, truncating that initial condition for different $R$'s, we simulate the corresponding \dlr-\enkf($R,{\rb{N}}$).
It is verified in \cref{fig:dlr-enkf-l2errs} that the integrator has the expected approximation properties. 
\modt{Using the norm $\trnorm{\Xcal_t}_H \mods{\coloneq} \norm{\mods{u}_t}_H$, we consider the error metric}
\begin{equation*}
	\modt{\trnorm{\hXcal_t^{\fom-\enkf} - \hXcal_t^{\dlr-\enkf}}^2_{\ell^2_{\rb{N}}(H)} = \frac{1}{{\rb{N}}} \sum_{p=1}^{\rb{N}} \trnorm{ (\hXcal_t^{\fom-\enkf})^{(p)} - (\hXcal_t^{\dlr-\enkf})^{(p)}}^2_{H}}\mods{.}
\end{equation*}
\mods{The} errors of \dlr-\enkf~ are compared to the ``best \modt{rank} $R$ approx'' 
\begin{equation*}
	\modt{\trnorm{\hXcal_t^{\fom-\enkf} - \E_{\rb{N}}[\hXcal_t^{\fom-\enkf}] - \Tcal_R[(\hXcal_t^{\fom-\enkf})^{\star}]}_{\modt{\ell^2_{\rb{N}}(H)}}}
\end{equation*}
(this is a slight abuse of denomination, as it technically corresponds to the best rank-$R$ approximation of the zero-sample mean part of the \fom-\enkf~and is suboptimal with respect to the ``true best rank-$R$ approximation'').
Note that \modt{it} is expected that the \dlr-\enkf~ensembles eventually deviate from the best $R$ approx, as the dynamics of the mean are dependent on the sample covariance, which is never exactly recovered for $R < R_{0}$ by \dlr-\enkf.
It is also verified that for $R = R_0$, \dlr-\enkf~recovers \fom-\enkf~(not shown in the figure).

\begin{figure}[ht]
	\centering
	\includegraphics[width=.5\linewidth]{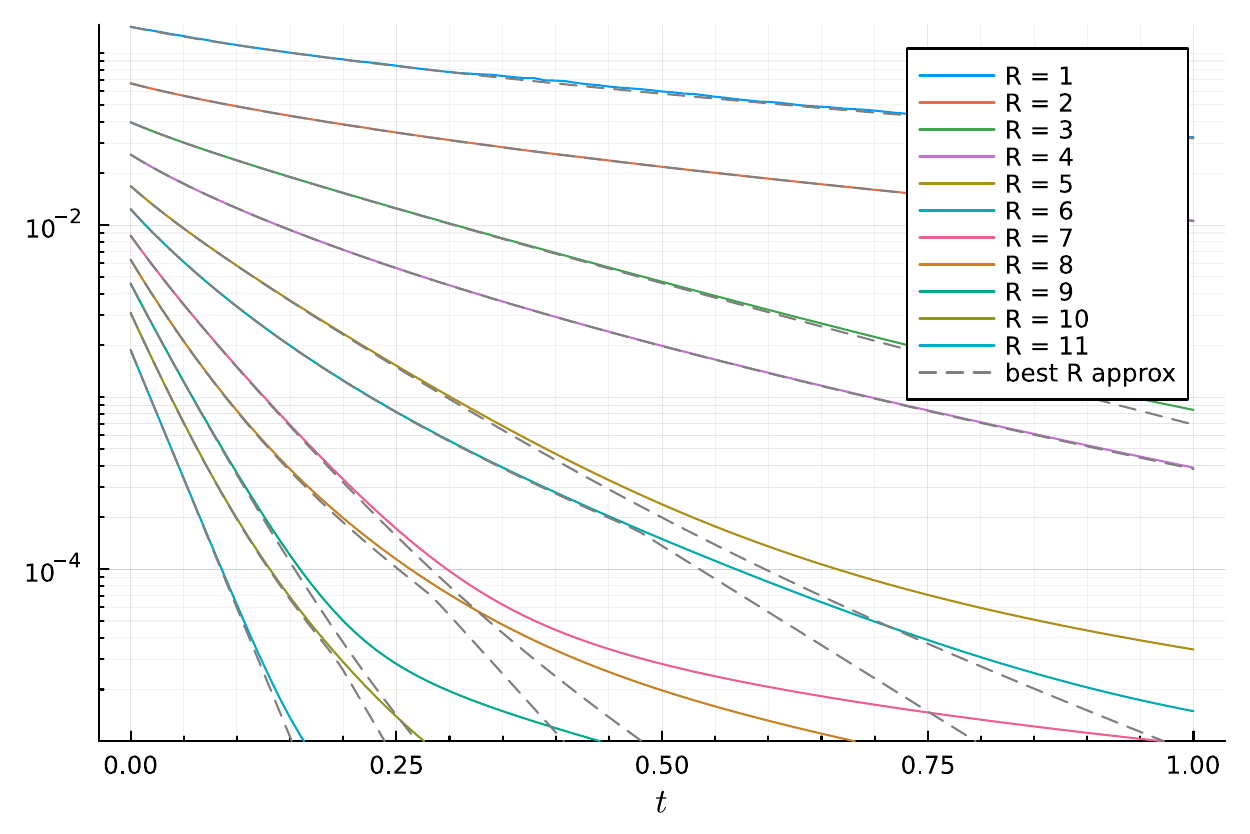}
	\caption{Errors $\trnorm{\hXcal_t^{\fom-\enkf} - \hXcal_t^{\dlr-\enkf}}_{\ell^2_{\rb{N}}(H)}$.}
	\label{fig:dlr-enkf-l2errs}
\end{figure}

We now verify the performance of \dlr-\enkf~in the context of signal-tracking via the RMSE metric.
In the case of an ensemble $\hXcal_t$, we use the metric
\begin{equation*}
	\mathrm{RMSE}(\hXcal_t, \Xcal_t^{\signal})^2 = \frac{1}{{\rb{N}}}\sum_{p=1}^{\rb{N}} \modt{\trnorm{\hXcal_t^{(p)} - \Xcal_t^{\signal}}^2_H}.
\end{equation*}
For both observation operators, we generate a true signal and the corresponding observations. 
We simulate the \enkf~with ${\rb{N}}=10$ or ${\rb{N}}'=425$ particles (for the same reasons as previously).
Finally, truncating the initial condition of \enkf(${\rb{N}}'$) to rank $R = 10$, we set it as the initial condition of \dlr-\enkf($R,{\rb{N}}'$).
As can be seen in~\cref{fig:avgd-rmses}, in both regimes of fully or partially observed dynamics, the RMSE statistics of \fom-\enkf(${\rb{N}}'$) and \dlr-\enkf($R,{\rb{N}}'$) remain close over the whole time-interval, and in particular the standard deviations are significantly smaller than those of \fom-\enkf(${\rb{N}}$).
We emphasise that those numerical results were obtained in the case \mods{of} small model noise $\sigma = 10^{-5}$, which is meaningful as it guarantees \mods{that} the ensemble retains a suitable low-rank structure at all times. 
In other numerical \modt{experiments} (not displayed here), when considering larger model noises, the low-rank structure is lost and \dlr-\enkf~could display unstable behaviour (the RMSE initally closely matching that of \fom-\enkf~but eventually diverging).

\begin{figure}[ht]
\centering
\begin{subfigure}{.5\textwidth}
  \centering
  \includegraphics[width=\linewidth]{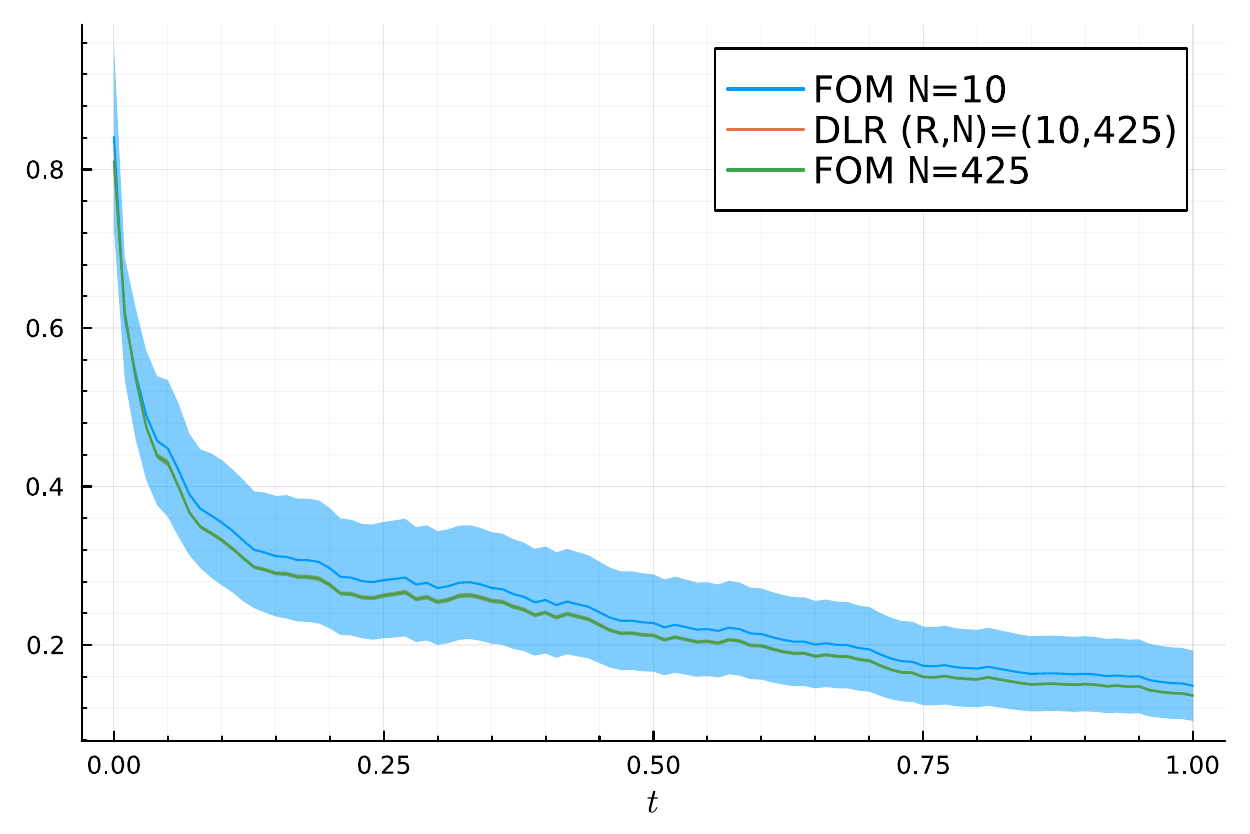}
  \caption{Full observations}
  \label{fig:avg-rmses-full}
\end{subfigure}%
\begin{subfigure}{.5\textwidth}
  \centering
  \includegraphics[width=\linewidth]{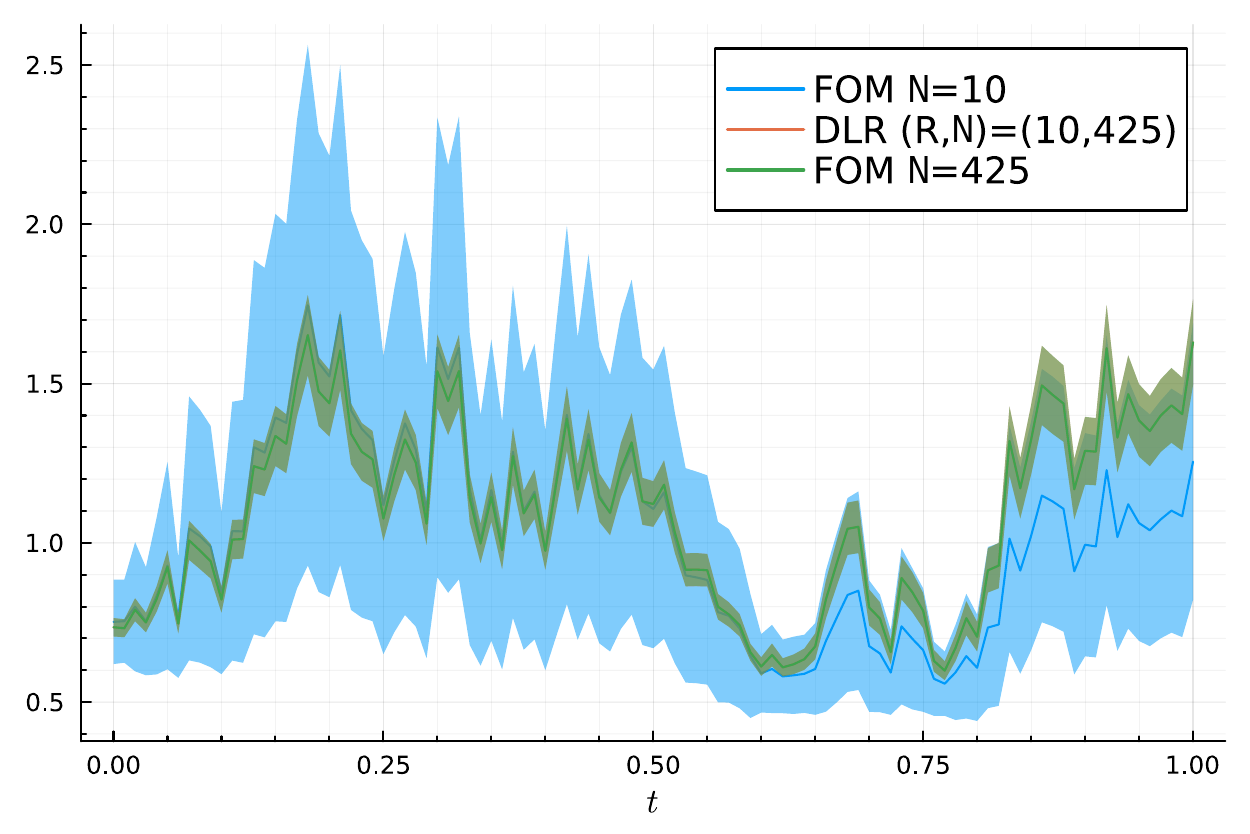}
  \caption{Partial observations}
  \label{fig:avg-rmses-part}
\end{subfigure}
\caption{ Sample mean RMSE $\pm$ standard deviation (over $10$ runs) of \enkf(${\rb{N}}=10$), \enkf(${\rb{N}}=425$) and \dlr-\enkf(${\rb{N}}=425$) with $R = 10$}
\label{fig:avgd-rmses}
\end{figure}

\section{Conclusion}

\modt{
In this work, we introduced a framework that merges Kalman–Bucy filters with the Dynamical Low-Rank approximation technique. 
Succinctly, the approximation evolves the filtering distribution on a low-dimensional, time-varying subspace, thereby reducing computational cost. 
After extending the DLR formulation of SDEs developed in~\cite{DynamicalLowZoccolan2023} to processes driven by an additional \mods{innovation} \Ito~process, we applied it to the Kalman–Bucy process with linear-affine dynamics and derived the \dlr-\kbp~system. 
We proved that the resulting \dlr-\kbp~is a Gaussian process and explicitly characterized its mean and (reduced) covariance, leading to reduced Kalman–Bucy equations. 
While these reduced equations are not new -- having been introduced in~\cite{LowRankApproximatedTsuzukiB2024,ComparisonOfEstYamada2021,OnANewLowRYamada2021} as \textit{ad hoc} model-order reduction schemes -- our analysis derives them rigorously from the low-rank structure of the underlying process. 
Moreover, we quantified the discrepancy between the \dlr-\kbp~and the full \kbp~in terms of the model noise size $\Sigma$, highlighting the necessity of assuming small or vanishing model noise, an assumption consistent with common data assimilation settings. 
Building on this analysis, we proposed an ensemble-type variant of the \dlr-\kbp, the \dlr-\enkf, and investigated its theoretical properties. 
In particular, we established well-posedness for linear-affine dynamics with full observations and, under the same assumptions, demonstrated a propagation-of-chaos property linking its mean-field limit to the \dlr-\kbp.
}

Beyond these contributions, the framework also suggests several directions for future research. 
\rb{A key direction is given by the suitable handling of \enkf\ filters for nonlinear systems, which offers several challenges.
	Such problems can display multi-modal and/or non-Gaussian filtering densities, and devising approaches to suitably handle such posteriors within a DLR context is of interest; in particular, we mention the Gaussian Mixture Model approach proposed in \cite{DataAssimilatiPt2Sonder2013,DataAssimilatiSonder2013}.
	Furthermore,} to reduce the potentially expensive evaluation cost of nonlinear terms, it is necessary to combine the \dlr-\enkf~with hyper-reduction techniques such as the Discrete Empirical Interpolation Method (DEIM)~\cite{NonlinearModelChatur2010}. 
%A relevant example of this situation arises in parameter identification within the filtering context~\cite{StateAndParamNusken2019}, where a non-linear system based on the \kbp~equations induce dynamics on the parameters, (hopefully) driving them to their true value. 
Additionally, the design of robust time-integration methods for \dlr-\enkf~systems constitutes a further important step in making these techniques practical for realistic data assimilation problems.
This is complicated by the fact that explicit Euler–Maruyama-type integrators can be numerically unstable for Kalman–Bucy processes (see~\cite{AStronglyCBlomker2018} and references therein), and furthermore dynamical low-rank systems often become stiff when the subspace is over-approximated, requiring bespoke integrators to ensure stability. 
\rb{Our recent work \cite{DynamicalLTrigo2026} in fact offers answers to several of the above issues, and builds on the present article to show that parameter identification tasks can be successfully carried out by extensions of the \dlr-\enkf\ to nonlinear systems.}
Finally, \rb{combining filter stabilisation techniques such as localisation or variance inflation with the \dlr\ methodology is of high interest, although evidently challenging as they operate on different levels. 
	Indeed, localisation leverages pointwise operations to stabilise the filter, whereas the \dlr-\enkf\ (which already aims to increase stability by enabling larger sample sizes at equivalent cost, thus reducing stochastic fluctuations in the covariance) operates on a mode-based decompositions. 
\rf{The combination of these approaches could be particularly relevant}} in intermediate regimes with a moderate number of particles.

\appendix

\section{Consistency of the DO equations} \label{app:a}

\ra{This Appendix is dedicated to proving~\Cref{th:meansepDO}.}

\subsection{Preparatory statements}

\begin{lemma} \label{lem:cond-expec}
	Let $\rf{\Xcal_t^{\true}}$ be the solution to~\cref{eqn:general-SDE}. 
	Assume that 
	\begin{enumerate}
		\item \modt{$a(t, \omega), b(t, \omega)$ and $c(t, \omega)$ are adapted and \rf{a.s.} continuous,} 
		\item \modt{$\E \int_0^t (\norm{a_s}^2 + \norm{b_s}_F^2 + \norm{c_s}^2 (\norm{\gamma_s}_F^2 + \norm{\Hcal_s}^2 + 1)) \md s < \infty$,}
	\item $a(t, \modr{\cdot})$, $c(t, \modr{\cdot}) \in L^2(\Omega)$ for \modt{almost} all $t \geq 0$, 
	\item  $\E [ \rf{a_t} \st \Acal_{\Zcal_t}]$ and $\E[ \rf{c_t} \st \Acal_{\Zcal_t}]$ have continuous sample paths almost surely, \modt{and }
	\item \modt{$\E \int_0^t  \norm{\E [c_s | \Acal_{\Zcal_s}]}^2 (\norm{\gamma_s}_F^2 + \norm{\Hcal_s}^2) \md s < \infty$,}
		%\item for $f(t, \rf{\Xcal_t^{\true}}) \in \{c(t, \rf{\Xcal_t^{\true}}), \E[c \st \Acal_{\Zcal_t}]\}$, $f(t, \rf{\Xcal_t^{\true}})$ is mean-square continuous, and the mapping $(s,s',u,u') \mapsto \E[\langle f_s H\Xcal^{\signal}_u, f_{s'} H \Xcal^{\signal}_{u'} \rangle]$ is continuous.
	\end{enumerate}
	\rf{where $a_t$, $b_t$, $c_t$ denote $a$, $b$, $c$ evaluated at $(t, \Xcal_t^{\true}, \Law(\Xcal_t^{\true}))$.}
	Then, 
	\begin{equation} \label{eqn:mean-separation}
		\rd \E[\rf{\Xcal_t^{\true}} \st \Acal_{\Zcal_t}] = 
		\E[ \rf{a_t} \st \Acal_{\Zcal_t}] \rd t + \E[ \rf{c_t} \st \Acal_{\Zcal_t} ] \rd \Zcal_t,
	\end{equation}
	and both \modt{$\E[ \rf{a_t} \st \Acal_{\Zcal_t}]$ and $\E[ \rf{c_t} \st \Acal_{\Zcal_t}]$} are $\Acal_{\Zcal_t}$-progressively measurable. 
\end{lemma}
\begin{proof}
	Since~$\E [ a(t, \modt{\omega}) \st \Acal_{\Zcal_t}]$ and $\E[ c(t, \modt{\omega}) \st \Acal_{\Zcal_t}]$ are $\Acal_{\Zcal_t}$-measurable and have continuous sample paths almost surely by assumption, progressive measurability follows by~\cite[Proposition 1.13]{BrownianMKaratzas1991}. 
	In integral form,
	\begin{align}
		\E[\rf{\Xcal_t^{\true}} \st \Acal_{\Zcal_t}] &= \E \left[ \int_{0}^t a(s, \modt{\omega}) \md s \st \Acal_{\Zcal_t} \right] + \E \left[ \int_{0}^t b(s, \modt{\omega}) \md \overline{\Wcal}_s \st \Acal_{\Zcal_t} \right] +  \E \left[ \int_{0}^t c(s, \modt{\omega}) \md \Zcal_s \st \Acal_{\Zcal_t} \right]  \nonumber
		\\
						&=
						\int_{0}^t  \E \left[ a(s, \modt{\omega}) \st \Acal_{\Zcal_t} \right] \md s + \modt{\E \left[ \int_{0}^t b(s, \modt{\omega}) \md \overline{\Wcal}_s \st \Acal_{\Zcal_t} \right]} + \E \left[ \int_{0}^t c(s, \modt{\omega}) \md \Zcal_s \st \Acal_{\Zcal_t} \right], \label{eq:cond-exp-dec}
	\end{align}
	\modt{Let us decompose}
	\begin{equation*}
		\E \left[ a(s, \modt{\omega}) \st \Acal_{\Zcal_t} \right]  = \E \left[ \modt{a_s} - \E[\modt{a_s}] \st \Acal_{\Zcal_t}  \right] + \E \left[ \modt{a_s} \right] = \E [\modt{a^0_s} \st \Acal_{\Zcal_t}] + \E[\modt{a_s}],
	\end{equation*}
	where $\E [\E[\modt{a^0_s} \st \Acal_{\Zcal_t}]] = \E[\modt{a^0_s}] = 0$ by the law of total expectation.
	Then, defining \modt{$A_{\Zcal_s} = L^2_0(\Acal_{\Zcal_s})$}, $A_{\overline{\Wcal}_s} = L^2_0(\Acal_{\overline{\Wcal}_s})$ the subspaces of \rf{zero-mean $\Acal_{\Zcal_t}$- resp. $\Acal_{\overline{\Wcal}_s}$-measurable random variables}, the conditional expectation $\E[\modt{a^0_s} \st \Acal_{\Zcal_s}] = P_{A_{\Zcal_s}}(\modt{a^0_s})$ \modt{is} the orthogonal $L^2$-projection \modt{of $a_s^0$} onto $\mods{A}_{\Zcal_s}$ (well-defined since $\modt{a_s},\modt{a^0_s} \in L^2(\Omega)$). 
	Since $\modt{a^0_s}$ \modt{is measurable with respect to} $\Acal_{\overline{\Wcal}_s} \vee \Acal_{\Zcal_s}$ (the $\sigma$-algebra generated by both $\sigma$-algebras) and those two $\sigma$-algebras are independent, 
	by~\cite[Theorem 8.13]{FoundationsOfModKallenberg2021}, $\modt{a^0_s} \in A_{\Zcal_s} \oplus A_{\overline{\Wcal}_s}$.
	In particular, it admits the decomposition
	\begin{equation*}
		\modt{a^0_s} = P_{A_{\Zcal_s}}(\modt{a^0_s}) + \left(\modt{a^0_s} -  P_{A_{\Zcal_s}}(\modt{a^0_s})  \right)
	\end{equation*}
	where the first term is $\Acal_{\Zcal_s}$-measurable and the second one is $\Acal_{\overline{\Wcal}_s}$-measurable.
	\modt{Taking conditional expectation w.r.t $\Acal_{\Zcal_t}$ we have}, for the first term, 
	\begin{equation*}
		\E \left[ P_{A_{\Zcal_s}}(\modt{a^0_s}) \st \Acal_{\Zcal_t} \right] = 
	\E \left[ \E \left[ \modt{a^0_s} \st \Acal_{\Zcal_s} \right] \st \Acal_{\Zcal_t} \right] = \E \left[ \modt{a^0_s} \st \Acal_{\Zcal_s} \right] %= \E\left[ a(s, \Xcal_s) \st \Acal_{\Zcal_s} \right]
	\end{equation*}
	since $\Acal_{\Zcal_s} \subset \Acal_{\Zcal_t}$ for $s \leq t$. 
	For the second term,	
	\begin{equation*}
		\E [ \mods{a^0_s} -  P_{A_{\Zcal_s}}(\mods{a^0_s})  \st \Acal_{\Zcal_t} ] = \E[\rf{a_s^0} - P_{A_{\Zcal_s}}(\rf{a_s^0}) ] = 0,
	\end{equation*}
since \modt{$a^0_s - P_{\Acal_{\Zcal_s}} a^0_s$} is $\Acal_{\overline{\Wcal}_s}$-measurable hence independent of $\Acal_{\Zcal_t}$.
	Combining \modt{both terms we have}
	\begin{equation*}
		\E [\mods{a^0_s} \st \Acal_{\Zcal_t}] + \E[\rf{a_s}] = \E [\mods{a^0_s} \st \Acal_{\Zcal_s}] + \E[\rf{a_s}]  =  \E [\rf{a_s} \st \Acal_{\Zcal_s}],
	\end{equation*}
	yielding \modt{the first term in the r.h.s. \rf{of \cref{eqn:mean-separation}}}.

	\mods{\mods{For the third term in the r.h.s.\ of~\cref{eq:cond-exp-dec}, we begin by showing that a sequence of stochastic integrals of simple functions suitably approximate $\int_0^t c(s,\omega) \md \Zcal_s$. 
		Firstly, we assume that $c(s,\omega)$ is uniformly bounded.
		Then,} consider the sequence of simple functions
		\begin{equation} \label{eqn:simple-functions}
			c^n(t, \omega) = c_0^n(\omega) \chi_{0}(t) + \sum_{k=0}^{2^n - 1} c_k^n(\omega) \chi_{(s_k, s_{k+1}]}(t),
		\end{equation}	
		where for $s_k = k t / 2^n$, $c^n_k = c(s_k, \omega)$ is $\Acal_{s_k}$-measurable.
		Then, following the proof in~\cite[Lemma 3.2.4]{BrownianMKaratzas1991}, it holds 
		\begin{equation*}
			\int_{0}^t c^n \md \Zcal_s = \sum_{k=0}^{2^n - 1} c^n_k(\omega) (Z_{s_{k+1}} - Z_{s_k}) \to \int_{0}^t c(s, \omega) \md \Zcal_s \quad \text{in } L^2.
		\end{equation*}
		Indeed, 
		\mods{
		\begin{multline}
			\E \left[ \norm{ \int_0^t ( c^n(s, \omega) - c(s, \omega) ) \md \Zcal_s }^2 \right]
			\\
			=
			\E \left[  \norm{ \int_0^t ( c^n(s, \omega) - c(s, \omega) ) \Hcal_s \md s +
			\int_0^t ( c^n(s, \omega) - c(s, \omega) ) \gamma_s \md \widetilde{\Vcal}_s}^2 \right] 
				\\
			\leq 
			2 \E \left[  \norm{ \int_0^t ( c^n(s, \omega) - c(s, \omega) ) \Hcal_s \md s }^2 \right] +
			2 \E \left[ \norm{ \int_0^t ( c^n(s, \omega) - c(s, \omega) ) \gamma_s \md \widetilde{\Vcal}_s}^2 \right] 
			\\
			\leq 
		2 t \E \left[  \int_0^t  \norm{ c^n(s, \omega) - c(s, \omega) }^2 \norm{ \Hcal_s }^2 \md s  \right] +
		2 \E \left[  \int_0^t \norm{ c^n(s, \omega) - c(s, \omega) }^2  \norm{ \gamma_s }^2_F \md s \right] 
			%\\
			%\E \left[ \norm{\sum_{k=0}^{2^n-1} \int_{s_k}^{s_{k+1}} (c^n_k(\omega) - c(s, \omega)) \Hcal_s \md s + \int_{s_k}^{s_{k+1}} (c^n_k(\omega) - c(s, \omega)) \gamma_s \md \widetilde{\Vcal}_s}^2 \right]
			%\\
			%\leq
			%2 \sum_{k=0}^{2^n-1} \E \left[ \norm{  \int_{s_k}^{s_{k+1}} (c^n_k(\omega) - c(s, \omega)) \Hcal_s \md s }^2 
			%+ \norm{\int_{s_k}^{s_{k+1}} (c^n_k(\omega) - c(s, \omega)) \gamma_s \md \widetilde{\Vcal}_s}^2 \right]
			\\
			\leq 2 \max\{t, 1\} \E \left[  \int_{0}^{t} \norm{c^n(s, \omega) - c(s, \omega)}^2 (\norm{\Hcal_s}^2 +  \norm{\gamma_s}^2_F) \md s \right] \xrightarrow{n \to \infty} 0, \label{eqn:l2-bound}
			%\\
			%\leq 2 \max\{t, 1\} \sum_{k=0}^{2^n-1} \E \left[  \int_{s_k}^{s_{k+1}} \norm{c^n_k(\omega) - c(s, \omega)}^2 (\norm{\Hcal_s}^2 +  \norm{\gamma_s}^2_F) \md s \right] \xrightarrow{n \to \infty} 0, \label{eqn:l2-bound}
		\end{multline}
		having used a Cauchy-Schwarz bound for the term with $\Hcal_s$ and the~\Ito~isometry for the integral with respect to $\widetilde{\Vcal}_t$ in the second inequality. 
		}
		%\begin{multline}
		%	\E \left[ \norm{\sum_{k=0}^{2^n - 1} c^n_k(\omega) (Z_{s_{k+1}} - Z_{s_k}) -  \int_{0}^t c(s, \omega) \md \Zcal_s }^2 \right]
		%	\\
		%	=
		%	\E \left[ \norm{\sum_{k=0}^{2^n-1} \int_{s_k}^{s_{k+1}} (c^n_k(\omega) - c(s, \omega)) \Hcal_s \md s + \int_{s_k}^{s_{k+1}} (c^n_k(\omega) - c(s, \omega)) \gamma_s \md \widetilde{\Vcal}_s}^2 \right]
		%	\\
		%	\leq
		%	2 \sum_{k=0}^{2^n-1} \E \left[ \norm{  \int_{s_k}^{s_{k+1}} (c^n_k(\omega) - c(s, \omega)) \Hcal_s \md s }^2 
		%	+ \norm{\int_{s_k}^{s_{k+1}} (c^n_k(\omega) - c(s, \omega)) \gamma_s \md \widetilde{\Vcal}_s}^2 \right]
		%	\\
		%	\leq 2 \sum_{k=0}^{2^n-1} \E \left[  \int_{s_k}^{s_{k+1}} \norm{c^n_k(\omega) - c(s, \omega)}^2 (\norm{\Hcal_s}^2 +  \norm{\gamma_s}^2_F) \md s \right] \xrightarrow{n \to \infty} 0, \label{eqn:l2-bound}
		%\end{multline}
		Since by assumption $c^n(s, \omega)$ is uniformly bounded, we treat $\rho_s = \norm{\Hcal_s}^2 + \norm{\gamma_s}^2_F$ as a (pathwise a.s. continuous) measure; thanks to the second assumption in the lemma, the dominated convergence theorem yields
		\begin{equation*}
			\E \left[ \int_0^t \norm{c(s, \omega)- c^{n}(s, \omega)}^2 \rho_s \md s \right] \xrightarrow{n \to \infty} 0.
		\end{equation*}
		\mods{
			If $c(s, \omega)$ is not uniformly bounded, we consider the sequence of bounded continuous processes
		}	
		\begin{equation*}
			\tilde{c}^\mods{m}(s, \omega) = c(s, \omega) \left( \chi_{\norm{c(s, \omega)} \leq \mods{m}} + \chi_{\norm{c(s, \omega)} > \mods{m}} \frac{\mods{m}}{\norm{c(s, \omega)}} \right),
		\end{equation*}
		which, again thanks to the second assumption in the lemma, verifies
		\begin{equation*}
			\E \left[ \int_0^t \norm{ c(s, \omega)- \tilde{c}^{\mods{m}}(s, \omega)}^2 \rho_s \md s \right] \xrightarrow{\mods{m} \to \infty} 0.
		\end{equation*}		
		\mods{We then apply the simple functions convergence argument to the uniformly bounded $\tilde{c}^m$ instead, and obtain convergence. 
		This proves the claim $\int_0^t c^n \md \Zcal_s \to \int_0^t c \md \Zcal_s$ in $L^2$. 
	}
	%Conversely, for a fixed $k$, the functions $\check{c}^{m,n}$ converge to the simple functions $c^k$ defined in~\cref{eqn:simple-functions}, i.e., 
	%	\begin{equation*}
	%		\E \left[ \int_0^t \norm{ {c}^n(s, \omega) - \check{c}^{m,n}(s, \omega)}^2 \rho_s \md s \right] \xrightarrow{m \to \infty} 0,
	%	\end{equation*}
	%	in particular using the (third) assumption of the lemma that $c(t, \cdot) \in L^2(\Omega)$ for almost all $t\geq 0$.
	%	The result follows by combining the above three inequalities.
		From there,}
	\begin{align*}
		\E \left[ \lim_{n \rightarrow + \infty} \sum_{\modt{k=0}}^{\modt{2^n - 1}} \modt{c^n_k(\omega)} (Z_{s_{k+1}} - Z_{s_k}) \st \Acal_{\Zcal_t} \right] 
	 &\overset{(a)}{=} 
	 \lim_{n \rightarrow + \infty} \sum_{\modt{k=0}}^{\modt{2^n - 1}} \E \left[ \modt{c^n_k(\omega)} (Z_{s_{k+1}} - Z_{s_k}) \st \Acal_{\Zcal_t} \right] 
	 \\
	 &\overset{(b)}{=} 
	 \lim_{n \rightarrow + \infty} \sum_{k=0}^{\modt{2^n -1}} \E \left[ \modt{c^n_k} \st \Acal_{\Zcal_t} \right] (Z_{s_{k+1}} - Z_{s_k}) 
	 \\
	 &\overset{(c)}{=} 
	 \lim_{n \rightarrow + \infty} \sum_{k=0}^{\modt{2^n -1}} \E \left[ c(s_k, \modt{\omega})  \st \Acal_{\Zcal_{s_k}} \right] (Z_{s_{k+1}} - Z_{s_k})
	 \\
	 &\overset{(d)}{=} 
	 \int_0^t \E [ c(s, \Xcal_s) \st \Acal_{\Zcal_s}] \md \Zcal_s
	\end{align*}
	where in (a) the limit and conditional expectation commute because the conditional expectation is a continuous (projection) operator,
	(b) follows from $\Zcal_{s_{k+1}} - \Zcal_{s_k}$ being $\Acal_{\Zcal_t}$-measurable (by the pull-out property) and \modt{for (c) we use the same argument as for the conditional expectation of $a$}.
	Finally, (d) holds in $L^2$ by applying the same arguments and proof as for the first stochastic integral.	
	\rf{This yields the second term \rf{in} the r.h.s.\ of \cref{eqn:mean-separation}.}
	
	\modt{It remains to verify that the conditional stochastic integral w.r.t $\overline{\Wcal}_t$ vanishes.
		By the same arguments as for the previous stochastic integral, there exists a sequence of simple functions $b^n(t, \omega)$ such that $\int_0^t b^n(s, \omega) \md \overline{\Wcal}_s \to \int_0^t b(s, \omega) \md \overline{\Wcal}_s$ in $L^2$.
		Then,
		\begin{align*}
			\E \left[ \int_0^t b^n(s, \omega) \md \overline{\Wcal}_s \st \Acal_{\Zcal_t} \right] 
	&= \sum_{k=0}^{2^n - 1} \E \left[ b^n_k (\overline{\Wcal}_{s_{k+1}} -  \overline{\Wcal}_{s_{k}}) \st \Acal_{\Zcal_t} \right]
	\\
	&= \sum_{k=0}^{2^n - 1} \E \left[ \left(b^n_k - \E[b^n_k  \st \Acal_{\Zcal_t}] \right) (\overline{\Wcal}_{s_{k+1}} -  \overline{\Wcal}_{s_{k}}) \st \Acal_{\Zcal_t} \right]
	\\
	&\quad + \E \left[ \E[b^n_k  \st \Acal_{\Zcal_t}] (\overline{\Wcal}_{s_{k+1}} -  \overline{\Wcal}_{s_{k}}) \st \Acal_{\Zcal_t} \right]
	\\
	&\overset{(a)}{=} \sum_{k=0}^{2^n - 1} \E \left[ \left(b^n_k - \E[b^n_k  \st \Acal_{\Zcal_t}] \right) (\overline{\Wcal}_{s_{k+1}} -  \overline{\Wcal}_{s_{k}}) \st \Acal_{\Zcal_t} \right]
	\\
	&\quad + \E[b^n_k  \st \Acal_{\Zcal_t}] \E \left[ \overline{\Wcal}_{s_{k+1}} -  \overline{\Wcal}_{s_{k}} \st \Acal_{\Zcal_t} \right]
	\\
	&\overset{(b)}{=} \sum_{k=0}^{2^n - 1} \E \left[ \left(b^n_k - \E[b^n_k  \st \Acal_{\Zcal_t}] \right) (\overline{\Wcal}_{s_{k+1}} -  \overline{\Wcal}_{s_{k}}) \st \Acal_{\Zcal_t} \right]
	\\
	&\quad + \E[b^n_k  \st \Acal_{\Zcal_t}] \E \left[ \overline{\Wcal}_{s_{k+1}} -  \overline{\Wcal}_{s_{k}}  \right]
	\\
	&\overset{(c)}{=} \sum_{k=0}^{2^n - 1} \E \left[ \left(b^n_k - \E[b^n_k  \st \Acal_{\Zcal_{t}}] \right) (\overline{\Wcal}_{s_{k+1}} -  \overline{\Wcal}_{s_{k}}) \right] 
	\\
	&\overset{(d)}{=} \sum_{k=0}^{2^n - 1} \E \left[ \left(b^n_k - \E[b^n_k  \st \Acal_{\Zcal_{s_k}}] \right) (\overline{\Wcal}_{s_{k+1}} -  \overline{\Wcal}_{s_{k}}) \right] = 0,
		\end{align*}
		having used the pull-out property in (a), the independence of $\overline{\Wcal}_t$ and $\Acal_{\Zcal_t}$ in (b), the fact that $(b^n_k - \E[b^n_k \st \Acal_{\Zcal_t}])$ and $\overline{\Wcal}_{s_{k+1}} - \overline{\Wcal}_{s_{k}}$ are independent of $\Acal_{\Zcal_t}$ (the former again by~\cite[Theorem 8.13]{FoundationsOfModKallenberg2021}) for (c) and finally for (d) the property $\E[b^n_k  \st \Acal_{\Zcal_{t}}] = \E[b^n_k  \st \Acal_{\Zcal_{s_k}}]$. 
		The remaining term vanishes as, by the discussion above, $b^n_k - \E[b^n_k  \st \Acal_{\Zcal_{s_k}}]$ is $\Acal_{\mods{\overline{\Wcal}_{s_k}}}$-measurable, and hence the increment $\overline{\Wcal}_{s_{k+1}} - \overline{\Wcal}_{s_k}$ is independent of it.
		This concludes the proof.
}
\end{proof}

\begin{lemma} \label{lem:matchcoeff}
	\mods{Suppose \rf{that for $\Xcal_t^{\true}$ satisfying \cref{eqn:general-SDE}} there exists adapted continuous processes $a'$, $b'$, $c'$ such that}	
	\begin{equation*}
		\mods{	 \md \rf{\Xcal_t^{\true}} = a_t' \md t + b_t' \md \overline{\Wcal}_t + c_t' \md \Zcal_t. }
	\end{equation*}
	Then $a_t = a_t'$, $b_t = b_t'$, $c_t=c_t'$.
\end{lemma}
\begin{proof}
	Since $\Zcal_t$ is an \Ito~process, rewrite
	\begin{equation*}
	\tilde{a}_t \md t + b_t \md \overline{\Wcal}_t + \modt{\tilde{c}_t} \md \widetilde{\Vcal}_t = \md \rf{\Xcal_t^{\true}} = \tilde{a}_t' \md t + b_t' \md \overline{\Wcal}_t + \modt{\tilde{c}_t'} \md \widetilde{\Vcal}_t,
	\end{equation*}
	\modt{with $\tilde{a}_t = a_t + \Hcal_t$, $\tilde{c}_t = c_t \gamma_t$, and similarly for $\tilde{a}_t'$ and $\tilde{c}'_t$.}
	Then
	\begin{equation*}
	(\tilde{a}_t - \tilde{a}_t') \md t =  (b_t' - b_t) \md \overline{\Wcal}_t + \modt{(\tilde{c}_t' - \tilde{c}_t)} \md \widetilde{\Vcal}_t.
	\end{equation*}
	As the process on the left has bounded variations and the one on the right does not, it must be that $\tilde{a}_t' = \tilde{a}_t$ almost surely and $(b_t' - b_t) \md \overline{\Wcal}_t + (c_t' - c_t) \md \widetilde{\Vcal}_t = 0$ almost surely. 
	Let $\Ncal_t$ \modt{be} defined by $ (b_t - b_t') \md \overline{\Wcal}_t = \md \Ncal_t = (c_t' - c_t) \md \widetilde{\Vcal}_t$ almost surely.
	In particular, its quadratic (co-)variation $\md [\modt{\Ncal_t^i}, \modt{\Ncal_t^j}]_t$ is
	\modt{
	\begin{align*}
		\md [\Ncal_t^i, \Ncal_t^i] 
		= \sum_{\ell=1}^k (\tilde{c}_t' -  \tilde{c}_t)^2_{i \ell} \rf{\md t}
		&= \sum_{\ell = 1}^{\bar{d}} (b_t' - b_t)^2_{i \ell} \rf{\md t}
		\\
		&= \md [(b_t - b_t')_{ii} \md \overline{\Wcal}_t, (c_t' - c_t)_{ii} \md \widetilde{\Vcal}_t] = 0,
	\end{align*}
	owing to the independence of $\overline{\Wcal}_t$ and $\widetilde{\Vcal}_t$.}
This implies $\modt{(b_t')_{ik} = (b_t)_{ik}}$ for all indices and similarly for $c_t$ and $c'_t$.
\end{proof}

\subsection{Proof of Theorem 3.1} \label{ssec:dlr-da-eqs}

\ra{We start by noting that Conditions~\ref{item:lrdecomp}--\ref{item:coeffconstraint} are straightforwardly satisfied by \cref{eqn:zeromean-mean,eqn:zeromean-phys,eqn:zeromean-stoch,eqn:correction-term-general} in \cref{th:meansepDO}. 
	\rf{It remains to prove the consistency argument in the second part of the statement.
	Suppose $\Xcal_t^{\true}$ satisfies} $\Xcal_t^{\true} = U_t^0 + \sum_{i=1}^R U_t^i Y_t^i$ for all $t \geq 0$ with dynamics dictated by
	\begin{align*} 
		\md U_t^0 &= \alpha_t^0 \md t + \beta_t^0 \md \Zcal_t, & 
		\md U_t^i &= \alpha_t^i \md t + \beta_t^i \md \Zcal_t,
			  & \md Y_t^i &= \gamma_t^i \md t + \delta_t^i \md \overline{\Wcal}_t + \varepsilon_t^i \md \Zcal_t,
	\end{align*}
	and satisfying the orthogonality \rf{condition $\md \bU_t^{\top} \bU_t = 0$, equivalent to}
	\begin{align*}
		(U^i_t)^{\top} \alpha_t^j &= 0 & (U^i_t)^{\top} \beta_t^j &= 0 & \rf{1 \leq i,j \leq R.}
	\end{align*}
	\rf{We need to show that} the evolution equations of $(U_t^0, U_t^i, Y_t^i)$ match \cref{eqn:zeromean-mean,eqn:zeromean-phys,eqn:zeromean-stoch,eqn:correction-term-general}. 

	We begin {by deriving the equation for} $U_t^0$. Taking conditional expectations in~\cref{eqn:general-SDE}, and since by assumption the coefficients verify the conditions of \cref{lem:cond-expec}, applying the lemma yields
}
\begin{equation*}
	\rd U_t^0
	=
	\rd \E[\Xcal_t^{\ra{\true}} \st \Acal_{\Zcal_t}] 
	=
	\E[ \rf{a_t} \st \Acal_{\Zcal_t}] \rd t + \E[ \rf{c_t} \st \Acal_{\Zcal_t} ] \rd \Zcal_t,
\end{equation*}
which identifies $\alpha_t^0$ and $\beta_t^0$ \rf{as the terms in \eqref{eqn:zeromean-mean}.}
The \rf{(conditionally)} zero-mean process $\Xcal_t^{\star} = \Xcal_t^{\ra{\true}} - \E [\Xcal_t^{\ra{\true}} \st \Acal_{\Zcal_t}]$ then verifies
\begin{align*}
	\rd \Xcal_t^{\star} 
		%=  a \md t + b \md \Wcal_t + c \md \Zcal_t - \E[a \st \Acal_{\Zcal_t}] \rd t - \E[c \rd \Zcal_t \st \Acal_{\Zcal_t}  ]  
		    &= \rd \left(\sum_{i=1}^R U_t^i Y^i_t \right)
		    %\\
		    = 
		    \rf{a_t^{\star}} \md t + \rf{b_t} \md \overline{\Wcal}_t + \rf{c_t^{\star}} \md \Zcal_t.
\end{align*}
By the \Ito~formula,
\begin{align*}
	\rd \left( \sum_{i=1}^R U_t^i Y_t^i \right) &= \sum_{i=1}^R \rd U_t^i Y_t^i + U_t^i \rd  Y_t^i + \rd [U_t^i, Y_t^i]_t
	\\
						    &= \sum_{i=1}^R  Y_t^i \alpha_t^i \rd t +  Y_t^i \beta_t^i \rd \Zcal_t  + U_t^i \gamma_t^i \rd t +  U_t^i \delta_t^i\rd \overline{\Wcal}_t + U_t^i \varepsilon_t^i \rd \Zcal_t + \rf{G^i_t} \md t 	\\ 
						    &= \rf{a^{\star}_t} \rd t + \rf{b_t} \rd \overline{\Wcal}_t + \rf{c^{\star}_t} \rd \Zcal_t,
\end{align*}
where 
\begin{equation} \label{eqn:correction-term}
	\rf{G^i_t} \md t = \sum_{s,s^\prime \mods{=1}}^k (\beta_t^i)_{:,s} (\varepsilon_t^i)_{s^{\prime}} \rd [\Zcal_t^{\modt{s}}, \Zcal_t^{\modt{s'}}]_t.
\end{equation}
For convenience, \rf{denote $G_t = \sum_{i=1}^R G^i_t$}.
We seek to identify the coefficients $\alpha_t^i, \beta_t^i, \gamma_t^i, \delta_t^i, \varepsilon_t^i$, for $i=1, \ldots R$.
\rb{By \cref{lem:matchcoeff} we can match coefficients, and we immediately have}
\begin{equation*}
\sum_{i=1}^R U_t^i \delta_t^i = b \iff \bU_t^{\top} \bU_t \bdelta_t = \bU_t^{\top} b \iff
\bdelta_t = M_{\bU_t}^{-1} \bU_t^{\top} b,
\end{equation*} 
where $\bdelta_t = (\delta_t^1, \ldots, \delta_t^R)^{\top}$. 
Matching terms, the other two relations are
\begin{equation} \label{eqn:do-relations}
	\sum_{i=1}^R  Y_t^i \alpha_t^i + U_t^i \gamma_t^i = \rf{a_t^{\star} - G_t}
  \quad\mathrm{and}\quad
  \sum_{i=1}^R Y_t^i \beta_t^i + U_t^i \varepsilon_t^i = \rf{c_t^{\star}}.
\end{equation}
Two observations are in order. If $\beta_t^i$ and $\varepsilon_t^i$ are determined from the second equation, 
$\rf{G_t}$ is fully determined. 
\mods{Secondly, the system~\eqref{eqn:do-relations} in the unknowns $\alpha^i_t, \beta^i_t, \gamma^i_t, \varepsilon^i_t$ is under-determined.}
%Secondly, even in that situation, both systems are redundant. 
%Indeed, for fixed $\beta_t^i$ and $\varepsilon_t^i$, the constraints on both equations have the same structure
%$\sum_{i=1}^R Y_t^i \modt{e}^i + U^i_t d^i = \mathrm{r.h.s.}$, which admits infinitely many solutions.
Imposing $\rf{\md \bU_t}$ orthogonal to $\bU_t$ removes that redundancy -- this is the condition in~\cref{eqn:mod-do}.
Applying $\modt{\Pi_{\bU_t}} = \bU_t M_{\bU_t}^{-1} \bU_t^{\top}$ resp. $\modt{\Pi^{\perp}_{\bU_t}} = I - \modt{\Pi_{\bU_t}}$ to both relations shows that
\begin{align*}
	&\sum_{i=1}^R Y_t^i \modt{\Pi_{\bU_t}} \beta_t^i + U_t^i \varepsilon_t^i = \modt{\Pi_{\bU_t}} \rf{c_t^{\star}} &
	&\sum_{i=1}^R \modt{\Pi^{\perp}_{\bU_t}}	\alpha_t^i Y^i_t	= \modt{\Pi^{\perp}_{\bU_t}} \rf{a_t^{\star}} - \sum_{i=1}^R \sum_{s,s^\prime \modt{=1}}^k \modt{\Pi^{\perp}_{\bU_t}} (\beta_t^i)_{:,s} (\varepsilon_t^i)_{s^{\prime}} \rd [\Zcal_t^{\modt{s}}, \Zcal_t^{\modt{s'}}]_t 
	\\
	&\sum_{i=1}^R Y_t^i \modt{\Pi^{\perp}_{\bU_t}} \beta_t^i  = \modt{\Pi^{\perp}_{\bU_t}} \rf{c^{\star}_t} &
							&\sum_{i=1}^R \modt{\Pi_{\bU_t}}	\alpha_t^i Y^i_t \! + \! U^i_t \gamma_t^i 
							= \Pi_{\bU_t} \rf{a_t^{\star}}\!\!-\!\!\sum_{i=1}^R \sum_{s,s^\prime \modt{ = 1}}^k \modt{\Pi_{\bU_t}} (\beta_t^i)_{:,s} (\varepsilon_t^i)_{s^{\prime}} \rd [\Zcal_t^{\modt{s}}, \Zcal_t^{\modt{s^{\prime}}}]_t 
\end{align*}
Combining with~\cref{eqn:mod-do} (noting it also implies $\modt{\Pi_{\bU_t}^{\perp}} \alpha_t^i = \alpha_t^i$ and $\modt{\Pi_{\bU_t}^{\perp}} \beta_t^i = \beta_t^i$), \mods{we obtain}
\begin{align*}
	\modt{(i)    \quad}	\sum_{i=1}^R U_t^i \varepsilon_t^i &= \modt{\Pi_{\bU_t}} \rf{c_t^{\star}} &
	\modt{(ii)  \quad} \sum_{i=1}^R \alpha_t^i Y^i_t &= \modt{\Pi_{\bU_t}^{\perp}} \rf{a_t^{\star} - G_t} \\
	\modt{(iii) \quad} \sum_{i=1}^R Y_t^i \beta_t^i  &= \modt{\Pi^{\perp}_{\bU_t}} \rf{c^{\star}_t} &
	\modt{(iv)  \quad} \sum_{i=1}^R  U^i_t \gamma_t^i  &= \modt{\Pi_{\bU_t}} \rf{a_t^{\star}}.
\end{align*}
Performing the same development as for $\bdelta_t$ yields
\begin{align*}
	\bgamma_t &= M^{-1}_{\bU_t} \bU_t^{\top} \rf{a_t^{\star}} & \modt{\bm{\varepsilon}_t} &= \mods{M_{\bU_t}^{-1} \bU_t^{\top} \rf{c_t^{\star}}}
\end{align*}
For equality \modt{$(iii)$}, multiplying on both sides by $Y^j_t$ and taking the conditional expectation
\begin{equation*}
	\sum_{i=1}^R \beta_t^i (M_{\bY_t})_{ij} = \E[\modt{\Pi^{\perp}_{\bU_t}} \rf{c_t^{\star}} Y_t^j \st \Acal_{\Zcal_t}], \quad j = 1, \ldots, R \iff \beta^i_t = \modt{\Pi^{\perp}_{\bU_t}} \E[ \rf{c_t^{\star}} (\bY_t M_{\bY_t}^{-1})_i
 \st \Acal_{\Zcal_t}] 
\end{equation*}
leveraging the property $\E[ \beta_t^i \st \Acal_{\Zcal_t} ] = \beta_t^i$ since $\beta_t^i$ is $\Acal_{\Zcal_t}$-measurable and furthermore it holds $\E[\modt{\Pi^{\perp}_{\bU_t}}f \st \Acal_{\Zcal_t}] = \modt{\Pi^{\perp}_{\bU_t}} \E[f \st \Acal_{\Zcal_t}]$ because $\bU_t$ is $\Acal_{\Zcal_t}$-measurable.
Applying the same method to the remaining equation yields (also using the fact that the $\alpha_t^i$ are $\Acal_{\Zcal_t}$-measurable)
\begin{align}
	\modt{\bm{\alpha}_t^i =  \modt{\Pi^{\perp}_{\bU_t}}\E \left[ ( \rf{a_t^{\star} - G_t}) (\bY_t M_{\bY_t}^{-1}) \st \Acal_{\Zcal_t} \right].} 
\end{align}
\modt{With that last step, we have identified all involved coefficients, and combining those results yields~\cref{eqn:zeromean-mean,eqn:zeromean-phys,eqn:zeromean-stoch}. 
\ra{This concludes the proof of \cref{th:meansepDO}.}}

\section{Additional proofs} \label{app:b}

\subsection{Proofs of Section 4}

\mods{The following lemma recalls the useful fact that inhomogeneous Ornstein-Uhlenbeck processes are Gaussian processes (with a short proof for the sake of completeness).}
\begin{lemma} \label{lem:in-ou-gauss}
        An inhomogeneous Ornstein-Uhlenbeck process 
	\begin{equation*}
		\rd \Xcal_t = \left(A(t) \Xcal_t + \modt{f}(t) \right) \rd t + \Sigma(t) \rd  \Wcal_t
        \end{equation*}
	with Gaussian initial conditions is a Gaussian process.
\end{lemma}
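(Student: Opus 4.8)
The plan is to solve the linear SDE explicitly via variation of parameters and then exploit the fact that Wiener integrals of deterministic integrands are Gaussian. First I would introduce the state-transition (fundamental) matrix $\Phi(t,s)$ solving $\partial_t \Phi(t,s) = A(t)\Phi(t,s)$ with $\Phi(s,s) = \bm{I}_d$; under the standing continuity assumptions this is well-defined and invertible. Applying \Ito's formula to $\Phi(t,0)^{-1}\Xcal_t$ then yields the closed form
\[
\Xcal_t = \Phi(t,0)\Xcal_0 + \int_0^t \Phi(t,s) f(s)\,\rd s + \int_0^t \Phi(t,s)\Sigma(s)\,\rd \Wcal_s .
\]
Here the middle summand is deterministic (it only shifts the mean), the first is a fixed linear image of the Gaussian initial datum $\Xcal_0$, and the third is a stochastic integral whose integrand $\Phi(t,s)\Sigma(s)$ is deterministic.

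Next I would reduce Gaussianity of the process to a one-dimensional statement: it suffices to show that for arbitrary times $0 \le t_1 < \cdots < t_N$ and vectors $\xi_1,\dots,\xi_N \in \R^d$ the scalar $\sum_{j=1}^N \langle \xi_j, \Xcal_{t_j}\rangle$ is univariate Gaussian. Writing each stochastic integral as $\int_0^{T}\mathbf{1}_{[0,t_j]}(s)\,\Phi(t_j,s)\Sigma(s)\,\rd\Wcal_s$ with $T = t_N$, the stochastic part of this linear combination collapses into a single Wiener integral $\int_0^T g(s)^{\top}\,\rd \Wcal_s$ against the deterministic integrand $g(s) = \sum_{j=1}^N \mathbf{1}_{[0,t_j]}(s)\,\Sigma(s)^{\top}\Phi(t_j,s)^{\top}\xi_j$.

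I would then argue that $\int_0^T g(s)^{\top}\,\rd\Wcal_s$ is Gaussian by the standard approximation: for piecewise-constant integrands the integral is a finite linear combination of independent Gaussian Brownian increments, hence Gaussian, and passing to the $L^2$ limit preserves Gaussianity since the mean and variance converge and, by Lévy's continuity theorem, a limit of Gaussian characteristic functions is again Gaussian. Finally, because $\Xcal_0$ is independent of $\Wcal$, the total linear combination is a constant plus a linear functional of the Gaussian $\Xcal_0$ plus an independent Gaussian Wiener integral, hence Gaussian. Arbitrariness of the $t_j$ and $\xi_j$ then gives joint Gaussianity of every finite-dimensional marginal, i.e.\ $\Xcal_t$ is a Gaussian process.

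The main obstacle is the careful justification of the $L^2$-limit step, namely verifying that the limiting law remains (possibly degenerate) Gaussian rather than escaping the Gaussian family; this is classical and follows from convergence of the first two moments together with Lévy's continuity theorem. A minor technical point is ensuring that the continuity of $A$, $f$ and $\Sigma$ suffices to make $\Phi$ and the stochastic integrals well-defined and square-integrable, which holds under the assumptions already in force.
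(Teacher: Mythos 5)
Your proof is correct and follows essentially the same route as the paper: both solve the linear SDE explicitly via the fundamental matrix $\Phi$ (the paper cites a standard reference for the variation-of-constants formula, you derive it via \Ito's formula applied to $\Phi(t,0)^{-1}\Xcal_t$) and conclude Gaussianity from the representation of $\Xcal_t$ as a deterministic shift plus a linear image of the Gaussian $\Xcal_0$ plus an independent Wiener integral with deterministic integrand. The only difference is one of completeness, not of substance: you spell out what the paper's one-line conclusion (``as a sum of Gaussian terms'') leaves implicit, namely the Cram\'er--Wold reduction showing that all finite-dimensional marginals are jointly Gaussian and the $L^2$-approximation argument for Gaussianity of the Wiener integral.
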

\begin{proof} 
	Denote $\Phi(t)$ the fundamental solution to $\dot{X}(t) = A(t) X(t)$, i.e., $\dot{\Phi}(t) = A(t) \Phi(t)$, $\Phi(t)$ is invertible and $\Phi(0) = \bm{I}_d$.
	By~\cite{StochasticDiffMao2008}, the solution to the process is given by
	\begin{equation*}
		\Xcal_t = \Phi(t) \left( \Xcal_0 + \int_{0}^t \Phi^{-1}(s) b(s) \md s + \int_{0}^t \Phi^{-1} \Sigma(s) \md \Wcal_s \right). 
	\end{equation*}
	As a sum of Gaussian terms, $\Xcal_t$ is Gaussian too.	
\end{proof}

\begin{proof}[Proof of~\cref{lem:dlr-kbp-wp}]

	This proof follows the approach taken in~\cite[Lemma 5.2]{OnTheStabilitDelMo2018}. 
	We denote by $\varphi_t(P_0) \equiv \varphi_t(\bU_0, M_{\mods{\bY_0}}) = \bU_t \varphi_t(M_{\mods{\bY_0}}) \bU_t^{\top}$ the solution flow to the Riccati equation, which is well-defined by the discussion in~\cref{ssec:red-ricc}.
	Consider an auxiliary system of non-homogeneous diffusions given by \modt{
\begin{align}
	\md \tilde{U}_t^0 &= (A \tilde{U}_t^0 + \modt{f}) \rd t + \varphi_t(P_0) H^{\top} \Gamma^{-1} (\rd \Zcal_t - H \tilde{U}^0_t \rd t) 
	\label{eqn:nhd-mean} 
	\\
	\md \tilde{\bU}_t &= P^{\perp}_{\tilde{\bU}_t} A \tilde{\bU}_t \md t, 
	\label{eqn:nhd-physical-modes} 
	\\
	\md \tilde{\bY}_t^{\top} &= \tilde{\bU}_t^{\top} \tilde{A} \tilde{\bU}_t \tilde{\bY}_t^{\top} \md t + \tilde{\bU}_t^{\top} (\Soh \rd \Wcal_t - \varphi_t(P_0) H^{\top} \Gmoh \md \Vcal_t), 
	\label{eqn:nhd-stochastic-modes}
\end{align}
where $\tilde{A} = A - \varphi_t(P_0) S$, and denote \modt{$\widetilde{\Xcal}_t = \tilde{U}^0_t + \tilde{\bU}_t \tilde{\bY}_t$}.}
We \modt{begin by showing} that this system is well-posed. 
\Cref{eqn:nhd-physical-modes} is well-posed by the discussion in~\cref{sssec:phys-modes}. 
	Owing to the fact that $\varphi_t(P_0)$ is well-defined,~\cref{eqn:nhd-stochastic-modes} is a non-homogeneous, linear SDE in $\tilde{\bY}_t$ -- well-posed by standard SDE theory~\cite{StochasticDiffMao2008}.
\modt{For~\cref{eqn:nhd-mean}, expanding the $\md \Zcal_t$~term via~\cref{eqn:truth-noisy-observations}, $\tilde{U}_t^0$ is seen to be a non-homogeneous linear SDE too, and therefore well-posed -- \mods{the system~\eqref{eqn:nhd-mean} to \cref{eqn:nhd-stochastic-modes} therefore admits a unique solution.}}

	\modt{Next, denoting} \modt{$\eta_t~=~\mathrm{Law}(\widetilde{\Xcal}_t | \Acal_{\Zcal_t} )$}, \modt{we verify that the (conditional)} covariance satisfies $P_{\eta_t} = \varphi_t(P_0)$, \modt{in other words it satisfies the modified Riccati equation~\eqref{eqn:dlr-vkbp-mean}.}
	\modt{To this end, consider}
	\modt{
	\begin{align*}
		\md (\tilde{\bY}_t^{\top} \tilde{\bY}_t) 
		&= \md \tilde{\bY}_t^{\top} \tilde{\bY}_t + \tilde{\bY}_t^{\top} \md \tilde{\bY}_t + \md [\tilde{\bY}_t^{\top}, \tilde{\bY}_t]_t 
		\\
		&=  ( \tilde{\bU}_t^{\top} \tilde{A} \tilde{\bU}_t \tilde{\bY}_t^{\top} \tilde{\bY}_t  
			+ \tilde{\bY}_t^{\top} \tilde{\bY}_t  \tilde{\bU}_t^{\top} \tilde{A}^{\top} \tilde{\bU}_t
			+ \tilde{\bU}_t^{\top} \Sigma \tilde{\bU}_t 
			+ \tilde{\bU}_t^{\top} \varphi_t(P_0) H^{\top} \Gamma^{-1} H \varphi_t(P_0) \tilde{\bU}_t ) \md t
		\\
		& \quad	+ (\tilde{\bU}_t^{\top} \Soh \tilde{\bY}_t \rd \Wcal_t 
		+ \tilde{\bY}_t^{\top} \rd \Wcal_t^{\top} \Soh \tilde{\bU}_t) 
		\\
		&\quad- (\tilde{\bU}_t^{\top}\varphi_t(P_0) H^{\top} \Gmoh \md \Vcal_t \tilde{\bY}_t + \tilde{\bY}_t^{\top} \md \Vcal_t^{\top} \Gmoh H \varphi_t(P_0) \tilde{\bU}_t )   
		\\
		&=  ( 
		A_{\tilde{\bU}_t} \tilde{\bY}_t^{\top} \tilde{\bY}_t  
		+ \tilde{\bY}_t^{\top} \tilde{\bY}_t  A^{\top}_{\tilde{\bU}_t}
		+ \Sigma_{\tilde{\bU}_t} 
		+  \varphi_t(M_{Y_0}) S_{\tilde{\bU}_t} \varphi_t(M_{Y_0}) ) \md t
		\\
		& \quad 
		- (\varphi_t(M_{Y_0}) S_{\tilde{\bU}_t} \tilde{\bY}_t^{\top} \tilde{\bY}_t 
		+ \tilde{\bY}_t^{\top} \tilde{\bY}_t S_{\tilde{\bU}_t} \varphi_t(M_{Y_0})) 	\md t
		+ J \md \Wcal_t + K \md \Vcal_t,
	\end{align*}
	where\modr{, with} a slight abuse of notation, in the third and fourth line, $\md \Wcal_t$ and $\md \Vcal_t$ are treated like vectors following the usual matrix multiplication rules, and $J \md \Wcal_t, K \md \Vcal_t$ represent the corresponding $R \times R$-dimensional expressions. 
	}
	Defining $N_{\tilde{\bY}_t} = \E[\tilde{\bY}_t^{\top} \tilde{\bY}_t \st \Acal_{\Zcal_t}] = \E[\tilde{\bY}_t^{\top} \tilde{\bY}_t] $ since $\tilde{\bY}_t$ is independent of $\Zcal_t$
	\begin{align*}
		\md N_{\modt{\tilde{\bY}_t}}
		&=  ( A_{\modt{\tilde{\bU}_t}} N_{\modt{\tilde{\bY}_t}} 
		+  N_{\modt{\tilde{\bY}_t}} A^{\top}_{\modt{\tilde{\bU}_t}}
		+ \Sigma_{\modt{\tilde{\bU}_t}} ) \md t 
		\\
		& \quad -  (N_{\modt{\tilde{\bY}_t}} S_{\modt{\tilde{\bU}_t}} \varphi_t(M_{\bY_0})  
		+ \varphi_t(M_{\bY_0}) S_{\modt{\tilde{\bU}_t}} N_{\modt{\tilde{\bY}_t}} \mods{) \md t}  + \varphi_t(M_{\bY_0}) S_{\modt{\tilde{\bU}_t}} \varphi_t(M_{\bY_0}) \md t.
	\end{align*}
	Defining the error term $E_t = N_{\tilde{\bY}_t} - \varphi_t(M_{\bY_0})$ and differentiating yields the Lyapunov equation
	\begin{align*}
		\frac{\md E_t}{\md t} = (A_{\modt{\tilde{\bU}_t}} -  \mods{\varphi_t(M_{\bY_0}) S_{\modt{\tilde{\bU}_t}}}) E_t  + E_t (A^{\mods{\top}}_{\modt{\tilde{\bU}_t}} - S_{\modt{\tilde{\bU}_t}} \varphi_t(M_{\bY_0})),
	\end{align*}
	with $E_0 = N_{\bY_0} - M_{\bY_0}$. 
	Vectorising the system
	\begin{equation*}
		\frac{\md}{\md t} \mathrm{vect}(E_t) = (\bm{I}_R \otimes L_t + L_t \otimes \bm{I}_R ) \mathrm{vect}(E_t),
	\end{equation*}
	with $L_t = A_{\modt{\tilde{\bU}_t}}^{\mods{\top}} -  S_{\modt{\tilde{\bU}_t}} \varphi_t(M_{\bY_0})$, \modt{it is seen to be a non-homogeneous linear system, and in particular the dynamics \mods{is} locally Lipschitz.
		This implies $E_t = \bm{0}_R$ if and only if $E_0 = \bm{0}_R$.
		Therefore, if $N_{\modt{\tilde{\bY}}_0} = M_{\bY_0}$, then $N_{\modt{\tilde{\bY}_t}} = \varphi_t(M_{\bY_0})$ \mods{at all times}, in turn implying 
		\begin{equation*}
			P_{\eta_t} = \bU_t N_{\tilde{\bY}_t} \bU_t^{\top} = \bU_t \varphi_t(M_{\bY_0}) \bU_t^{\top} = \varphi_t(P_0).
		\end{equation*}
%		The exhibited process $(\tilde{U}^0_t,\tilde{\bU}_t^0, \tilde{\bY}_t^0)$ therefore satisfies~\cref{eqn:nhd-mean,eqn:nhd-physical-modes,eqn:nhd-stochastic-modes}, which coincide with~\cref{eqn:kb-dlr-U0,eqn:kb-dlr-U,eqn:kb-dlr-Y}; therefore,~\cref{eqn:kb-dlr-U0,eqn:kb-dlr-U,eqn:kb-dlr-Y} are well-posed.}
	\mods{The exhibited process $(\tilde{U}^0_t,\tilde{\bU}_t^0, \tilde{\bY}_t^0)$ hence satisfies~\cref{eqn:kb-dlr-U0,eqn:kb-dlr-U,eqn:kb-dlr-Y}, which coincide with~\cref{eqn:nhd-mean,eqn:nhd-physical-modes,eqn:nhd-stochastic-modes}, which are well-posed; therefore,~\cref{eqn:kb-dlr-U0,eqn:kb-dlr-U,eqn:kb-dlr-Y} are well-posed, and admit the same unique solution.}}
\end{proof}

%\modt{
%\begin{lemma} \label{lem:sup-bounds-glip-int-monot}
%	Let $a(t, x) : \R_+ \times \R^d \to \R^d$ and $b(t, x) : \R_+ \times \R^d \to \Mat{d \times d}$ such that
%	\begin{align}
%		\norm{a(t, x) - a(t, y)}^2 + \norm{b(t, x) - b(t, y)}^2_F &\leq K \norm{x - y}^2, \label{eqn:glob-lip} \\
%		\E \left[ x^{\top} a(t,x) + 2 \norm{b(t,x)}^2_F \right] \leq K (1 +  \E \norm{x}^2). \label{eqn:int-monot}
%	\end{align}
%	and let $\Xcal_t$ a solution to 
%	\begin{equation*}
%		\md \Xcal_t = a(t, \Xcal_t) \md t + b(t, \Xcal_t) \md \Wcal_t.
%	\end{equation*}
%	Then, 
%	\begin{equation*} 
%		\E \left[  \sup_{0 \leq s \leq t} \norm{\Xcal_s}^2 \right] 
%		\leq 
%		(1 + 3 \E\norm{\Xcal_0}^2) e^{3 K t (t - 4)}.
%	\end{equation*}
%\end{lemma}
%\begin{proof}
%	The proof is based on a refinement of the proof in~\cite[Lemma 3.2]{StochasticDiffMao2008}.
%\end{proof}
%}
%
%\modt{
%\begin{proposition} \label{prop:well-posed-glip-int-monot}
%	Let $a(t, x) : \R_+ \times \R^d \to \R^d$ and $b(t, x) : \R_+ \times \R^d \to \Mat{d \times d}$ verify~\cref{eqn:glob-lip,eqn:int-monot}.
%	Then, there exists a unique solution to 
%	\begin{equation*}
%		\md \Xcal_t = a(t, \Xcal_t) \md t + b(t, \Xcal_t) \md \Wcal_t, \quad \Xcal_0 = \xi.
%	\end{equation*}
%\end{proposition}
%\begin{proof}
%	The proof is based on a refinement of the proof in~\cite[Theorem 3.1]{StochasticDiffMao2008}.
%\end{proof}
%}

\subsection{Proofs of Section 5}

\modt{	
	The characterisation of the quadratic variation of the martingale $\Ncal_t$ in~\cref{eqn:sample-gram-martingale} will be useful~\modr{to prove~\cref{lem:beta-gamma bounds,lem:trMYt-sup-bounds,lem:dEt-bound,lem:bounded-moments-xdlr}.}
	\mods{For convenience, the results are summarised in the following lemma.}
	\begin{lemma}
		\mods{Label the elements of $\Ncal_t$ in~\cref{eqn:sample-gram-martingale} as}
	\begin{equation} \label{eqn:Ncal-ij-detail}
		\mods{	(\Ncal_t)_{ij} = \frac{1}{\sqrt{{\rb{N}}-1}} \sum_{p=1}^{{\rb{N}}} (\alpha_{ij}^{(p)} + \alpha_{ji}^{(p)}) \md \hWcal_t^{\star,(p)} + (\beta_{ij}^{(p)} + \beta_{ji}^{(p)}) \md \hVcal_t^{\star,(p)}.}
	\end{equation}
	\mods{Then, the martingale} $\Ncal_t$ has quadratic variation
	\begin{align}
		C_{\Ncal}((i,j),(k,\ell)) &= \frac{\md}{\md t} \left[ (\Ncal_t)_{ij}, (\Ncal_t)_{k\ell} \right]_t \nonumber %\label{eqn:qv-nt-indx} 
		\\
					  %&= \frac{1}{{\rb{N}}-1} \sum_{p=1}^{\rb{N}} (\alpha_{ij}^{(p)} + \alpha_{ji}^{(p)}) (\alpha_{k\ell}^{(p)} + \alpha_{\ell k}^{(p)})^{\top}
					  %+ (\beta_{ij}^{(p)} + \beta_{ji}^{(p)}) (\beta_{k\ell}^{(p)} + \beta_{\ell k}^{(p)})^{\top}  \nonumber
					  %\\
					  %&\quad \modt{-} \frac{1}{({\rb{N}}-1){\rb{N}}}\sum_{\modt{p, p' =1}}^{\rb{N}} (\alpha_{ij}^{(p)} + \alpha_{ji}^{(p)}) (\alpha_{k\ell}^{(p')} + \alpha_{\ell k}^{(p')})^{\top}
					  %+ (\beta_{ij}^{(p)} + \beta_{ji}^{(p)}) (\beta_{k\ell}^{(p')} + \beta_{\ell k}^{(p')})^{\top}  \nonumber
					  %\\
					  &= \frac{1}{{\rb{N}}-1} \sum_{p=1}^{\rb{N}} (\alpha_{ij}^{(p)} + \alpha_{ji}^{(p)}) (\alpha_{k\ell}^{(p)} + \alpha_{\ell k}^{(p)})^{\top}
					  + (\beta_{ij}^{(p)} + \beta_{ji}^{(p)}) (\beta_{k\ell}^{(p)} + \beta_{\ell k}^{(p)})^{\top}. \label{eqn:qv-ncal}
	\end{align}
	\mods{Furthermore, it holds}
	\begin{align} \label{eqn:qv-nt-tr}
		\sum_{i=1}^R \sum_{k=1}^R C_{\Ncal} ((i,k),(k,i)) %&= \mods{\frac{1}{{\rb{N}}-1}}\sum_{i,k=1}^R \sum_{p=1}^{\rb{N}} (\alpha_{ik}^{(p)} + \alpha_{ki}^{(p)}) (\alpha_{ki}^{(p)} + \alpha_{i k}^{(p)})^{\top} 
		%+ (\beta_{ik}^{(p)} + \beta_{ki}^{(p)}) (\beta_{ki}^{(p)} + \beta_{i k}^{(p)})^{\top} \nonumber \\
						       &= \sum_{i,k=1}^R(\Sigma_{\bU_t})_{ik} (\hP_{\hbY_t})_{k i} + (\Sigma_{\bU_t})_{ii} (\hP_{\hbY_t})_{k k}  \nonumber
						       \\
						       & \quad + (\Sigma_{\bU_t})_{kk} (\hP_{\hbY_t})_{ii} + (\Sigma_{\bU_t})_{ki} (\hP_{\hbY_t})_{i k}  \nonumber
						       \\ 
						       & \quad + (\hP_{\hbY_t} S_{\bU_t} \hP_{\hbY_t})_{ik} (\hP_{\hbY_t})_{ki} + (\hP_{\hbY_t} S_{\bU_t} \hP_{\hbY_t})_{ii} (\hP_{\hbY_t})_{kk} \nonumber
						       \\
						       & \quad + (\hP_{\hbY_t} S_{\bU_t} \hP_{\hbY_t})_{kk} (\hP_{\hbY_t})_{ii} + (\hP_{\hbY_t} S_{\bU_t} \hP_{\hbY_t})_{ki} (\hP_{\hbY_t})_{ik}.
	\end{align}

	\end{lemma}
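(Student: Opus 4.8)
The plan is to establish both displayed identities directly from the definition of $\Ncal_t$ in~\eqref{eqn:sample-gram-martingale} by computing quadratic covariations entrywise. First I would read off from~\eqref{eqn:sample-gram-martingale} and~\eqref{eqn:Ncal-ij-detail} the explicit row-vector coefficients, namely $\alpha_{ij}^{(p)} = \hY_t^{j,(p)}(U_t^i)^{\top}\Soh$ (a $1\times d$ row) and $\beta_{ij}^{(p)} = \hY_t^{j,(p)}(U_t^i)^{\top}\widehat{P}_t H^{\top}\Gmoh$ (a $1\times k$ row), so that each $(\Ncal_t)_{ij}$ is a scalar \Ito~integral against the $P$ centred Brownian motions $\hWcal_t^{\star,(p)}$ and $\hVcal_t^{\star,(p)}$. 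This makes the covariation $[(\Ncal_t)_{ij},(\Ncal_t)_{k\ell}]_t$ a scalar, consistent with the right-hand side of~\eqref{eqn:qv-ncal}, where each product is a row times a column.

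For the first identity~\eqref{eqn:qv-ncal} I would form $[(\Ncal_t)_{ij},(\Ncal_t)_{k\ell}]_t$ and substitute the covariation rule recalled earlier, $\tfrac{\md}{\md t}[\hWcal_t^{\star,(p)}(m),\hWcal_t^{\star,(p')}(m')]_t = (\delta_{pp'}-\tfrac1P)\delta_{mm'}$ and likewise for $\hVcal^{\star}$. Independence of $\hWcal$ and $\hVcal$ annihilates all $\alpha$--$\beta$ cross terms, while the factor $\delta_{mm'}$ collapses each inner matrix to $(\delta_{pp'}-\tfrac1P)\bm{I}_d$, contracting the rows into $(\delta_{pp'}-\tfrac1P)(\alpha_{ij}^{(p)}+\alpha_{ji}^{(p)})(\alpha_{k\ell}^{(p')}+\alpha_{\ell k}^{(p')})^{\top}$. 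The key step — which I expect to be the main point of care rather than of difficulty — is that the off-diagonal weight $-\tfrac1P$ generates the term $\bigl(\sum_p(\alpha_{ij}^{(p)}+\alpha_{ji}^{(p)})\bigr)\bigl(\sum_{p'}(\alpha_{k\ell}^{(p')}+\alpha_{\ell k}^{(p')})\bigr)^{\top}$, which vanishes because $\sum_p\alpha_{ij}^{(p)} = \bigl(\sum_p\hY_t^{j,(p)}\bigr)(U_t^i)^{\top}\Soh = 0$ thanks to the enforced zero-sample-mean constraint $\E_P[\hbY_t^{j}]=0$. Only the diagonal $p=p'$ terms then survive, and the same argument applied to the $\beta$ part yields~\eqref{eqn:qv-ncal}.

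For the trace-type identity~\eqref{eqn:qv-nt-tr} I would specialise~\eqref{eqn:qv-ncal} to $(i,j)=(i,k)$ and $(k,\ell)=(k,i)$, so that $\alpha_{ik}^{(p)}+\alpha_{ki}^{(p)} = \hY_t^{k,(p)}(U_t^i)^{\top}\Soh + \hY_t^{i,(p)}(U_t^k)^{\top}\Soh$, and expand the resulting self-product. The scalar contractions close through three identities: $(U_t^i)^{\top}\Sigma U_t^k = (\Sigma_{\bU_t})_{ik}$; the reduced-covariance reading $\tfrac1{P-1}\sum_p\hY_t^{k,(p)}\hY_t^{i,(p)} = (\hM_{\hbY_t})_{ki}$; and, using $\widehat{P}_t\bU_t = \bU_t\hM_{\hbY_t}$ together with $\bU_t^{\top}\bU_t=\bm{I}_R$ (whence $\bU_t^{\top}\widehat{P}_t S\widehat{P}_t\bU_t = \hM_{\hbY_t}S_{\bU_t}\hM_{\hbY_t}$), the identity $(U_t^i)^{\top}\widehat{P}_t S\widehat{P}_t U_t^k = (\hM_{\hbY_t}S_{\bU_t}\hM_{\hbY_t})_{ik}$. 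Substituting these and invoking the symmetry of $\Sigma_{\bU_t}$ and $\hM_{\hbY_t}$ reproduces the four $\Sigma_{\bU_t}$-summands and the four $\hM_{\hbY_t}S_{\bU_t}\hM_{\hbY_t}$-summands of~\eqref{eqn:qv-nt-tr}, and summing over $i,k=1,\dots,R$ gives the claim. The only genuine bookkeeping hazard is keeping straight which of the two indices lands on $\Sigma_{\bU_t}$ (resp. $\hM_{\hbY_t}S_{\bU_t}\hM_{\hbY_t}$) versus on $\hM_{\hbY_t}$ once the diagonal and cross terms of the expansion are recombined.
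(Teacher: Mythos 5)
Your proposal is correct and follows essentially the same route as the paper: compute the covariation entrywise, kill the $-\tfrac1P$ cross-particle terms via the zero-sample-mean constraint $\sum_p \hY_t^{j,(p)}=0$, and then contract the diagonal terms using $(U_t^i)^{\top}\Sigma U_t^k=(\Sigma_{\bU_t})_{ik}$, $\tfrac{1}{P-1}\sum_p \hY_t^{j,(p)}\hY_t^{\ell,(p)}=(\hM_{\hbY_t})_{j\ell}$ and $(U_t^i)^{\top}\widehat{P}_t S\widehat{P}_t U_t^k=(\hM_{\hbY_t}S_{\bU_t}\hM_{\hbY_t})_{ik}$. The only difference is that you derive these contraction relations (the paper's~\eqref{eqn:alpha-beta}) explicitly from $\widehat{P}_t\bU_t=\bU_t\hM_{\hbY_t}$ and $\bU_t^{\top}\bU_t=\bm{I}_R$, whereas the paper simply states them; this is a welcome filling-in of detail, not a different argument.
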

	\begin{proof}
		\mods{	Developing }
	\begin{align*}
		C_{\Ncal}((i,j),(k,\ell)) 
					  &= \frac{1}{{\rb{N}}-1} \sum_{p=1}^{\rb{N}} (\alpha_{ij}^{(p)} + \alpha_{ji}^{(p)}) (\alpha_{k\ell}^{(p)} + \alpha_{\ell k}^{(p)})^{\top}
					  + (\beta_{ij}^{(p)} + \beta_{ji}^{(p)}) (\beta_{k\ell}^{(p)} + \beta_{\ell k}^{(p)})^{\top}  
					  \\
					  &\quad \modt{-} \frac{1}{({\rb{N}}-1){\rb{N}}}\sum_{\modt{p, p' =1}}^{\rb{N}} (\alpha_{ij}^{(p)} + \alpha_{ji}^{(p)}) (\alpha_{k\ell}^{(p')} + \alpha_{\ell k}^{(p')})^{\top}
					  + (\beta_{ij}^{(p)} + \beta_{ji}^{(p)}) (\beta_{k\ell}^{(p')} + \beta_{\ell k}^{(p')})^{\top},
	\end{align*}
	the equality~\eqref{eqn:qv-ncal} follows by noting that all the terms of the second line vanish since each coefficient depends linearly on $Y_t^{k,(p)}$ for some $k,p$, and $\sum_{p=1}^{\rb{N}} \mods{\widehat{Y}}_t^{k,(p)} = 0.$

	\mods{\Cref{eqn:qv-nt-tr} follows by using the relations}
	\begin{equation} \label{eqn:alpha-beta}
		\begin{aligned}
			\frac{1}{{\rb{N}}-1}\sum_{p=1}^{\rb{N}} \alpha_{ij}^{(p)} (\alpha_{k \ell}^{(p)})^{\top} &= (\Sigma_{\bU_t})_{ik} (\hP_{\hbY_t})_{j \ell},  \\
			\frac{1}{{\rb{N}}-1} \sum_{p=1}^{\rb{N}}  \beta_{ij}^{(p)} (\beta_{k \ell}^{(p)})^{\top} &= (\hP_{\hbY_t} S_{\bU_t} \hP_{\hbY_t})_{ik} (\hP_{\hbY_t})_{j\ell}.
		\end{aligned}
	\end{equation}
	\end{proof}
}

\begin{proof}[Proof of~\cref{lem:beta-gamma bounds}]
	Taking the trace of~\cref{eqn:d-sample-Gram} 
	\begin{equation*}
		\md \tr(\hP_{\hbY_t}) = \left( \tr\left(  ( A_{\bU_t} + A_{\bU_t}^{\top} ) \hP_{\hbY_t} \right) + \tr(\Sigma_{\bU_t}) - \tr(\hP_{\hbY_t} S_{\bU_t} \hP_{\hbY_t}) \right)\md t + \md \tilde{\Ncal_t},
	\end{equation*}
	where $\tilde{\Ncal_t} = \frac{1}{\sqrt{{\rb{N}}-1}} \sum_{i=1}^R (\Ncal_t)_{ii}$, and using~\cref{eqn:qv-ncal,eqn:alpha-beta} it holds
	\begin{align}
		\modt{\frac{\md}{\md t}} [\tilde{\Ncal_t}, \tilde{\Ncal_t}]_t &= \frac{\modt{4}}{{\rb{N}}-1} \sum_{ij} (\Sigma_{\bU_t})_{ij} (\hP_{\hbY_t})_{ij} + (\hP_{\hbY_t} S_{\bU_t} \hP_{\hbY_t})_{ij} (\hP_{\hbY_t})_{ij}) \nonumber
				         		 \\
									      &= \frac{\modt{4}}{{\rb{N}}-1} \left( \tr(\Sigma_{\bU_t} \hP_{\hbY_t}) + \tr(\hP_{\hbY_t} S_{\bU_t} \hP_{\hbY_t}^{\modt{2}}) \right) \nonumber
							 \\
									      &\leq \frac{\modt{4}}{{\rb{N}}-1} \left( \modt{\lambda_{\max}(\Sigma_{\bU_t})} \tr(\hP_{\hbY_t}) + \rho \tr(\hP_{\hbY_t}^{\modt{3}}) \right). \nonumber
							 \\
									      &\mods{\leq \frac{4}{{\rb{N}}-1} \tr(\hP_{\hbY_t})  \left( \lambda_{\max}(\Sigma_{\bU_t}) + \rho \tr(\hP_{\hbY_t})^{2}) \right).} \label{eqn:dNtilde-quad-var-bound}
	\end{align}
	Similarly,
	\begin{multline*}
		\tr\left(  ( A_{\bU_t} + A_{\bU_t}^{\top} ) \hP_{\hbY_t} \right) + \tr(\Sigma_{\bU_t}) - \tr(\hP_{\hbY_t} S_{\bU_t} \hP_{\hbY_t}) 
		\\
		\leq  
		\lambda_{\max}(A + A^{\top}) \tr(\hP_{\hbY_t}) + \tr(\Sigma_{\bU_t}) - \frac{\rho}{R} (\tr(\hP_{\hbY_t}))^2,
	\end{multline*}
	\modt{having used
		\begin{equation*}
			\tr(\hP_{\hbY_t} S_{\bU_t} \hP_{\hbY_t}) = \rho \norm{\hP_{\hbY_t}}^2_F 
			= \rho \left(  \sum_{i=1}^R \lambda_i(\hP_{\hbY_t})^2 \right) 
			\geq \frac{\rho}{R} \left(  \sum_{i=1}^R \lambda_i(\hP_{\hbY_t}) \right)^2
			\geq \frac{\rho}{R} \tr(\hP_{\hbY_t})^2.
		\end{equation*}
	}
	\noindent Therefore, the assumptions for~\cref{lem:fost-lyap} \mods{(a)} hold true with
	\begin{align*}
		\modt{\gamma} &= \modt{0} & 3\alpha &= \lambda_{\max}(A + A^{\top}) 	& \beta  &= \frac{\rho}{R} 	& r &= \tr(\Sigma_{\bU_t}) \\
			      & &        \tau_0  &= \frac{\mods{4\lambda_{\max}(\Sigma_{\bU_t})}}{{\rb{N}}-1}   & \tau_1 &= \modt{0}  & \tau_2 &= \modt{\frac{4 \rho}{{\rb{N}}-1}}.
	\end{align*}
	and the result follows.
\end{proof}

\modt{
\begin{proof}[Proof of~\cref{lem:trMYt-sup-bounds}]
	By the characterisation $\md \tr(\hP_{\hbY_t}) = a(t,\tr(\hP_{\hbY_t})) \md t + \tilde{\Ncal}_t$ in~\cref{lem:beta-gamma bounds},
	\begin{multline*}
		\tr(\hP_{\hbY_t}) \cdot a = \tr(\hP_{\hbY_t}) \cdot \left( \tr\left(  ( A_{\bU_t} + A_{\bU_t}^{\top} ) \hP_{\hbY_t} \right) + \tr(\Sigma_{\bU_t}) - \tr(\hP_{\hbY_t} S_{\bU_t} \hP_{\hbY_t}) \right)
		\\
		\leq
		(\lambda_{\max}(A + A^{\top}) + \frac{1}{2}) \tr(\hP_{\hbY_t})^2 + \frac{1}{2}\tr(\Sigma_{\bU_t})^2 - \frac{\rho}{R} (\tr(\hP_{\hbY_t}))^3,
	\end{multline*}
	and the quadratic variation \mods{is bounded as in~\cref{eqn:dNtilde-quad-var-bound}.}
	%\begin{multline*}
	%	\frac{1}{2} \frac{\md}{\md t}[\tilde{\Ncal_t}, \tilde{\Ncal_t}]_t  \leq \frac{1}{P-1} \left( \modt{\lambda_{\max}(\Sigma_{\bU_t})} \tr(\hP_{\hbY_t}) + \rho \tr(\hP_{\hbY_t}^{\modt{3}}) \right) 
	%	\\
	%	\leq \frac{1}{P-1}\left( \frac{1}{2} \modt{\lambda_{\max}(\Sigma_{\bU_t})^2} + \frac{1}{2}\tr(\hP_{\hbY_t})^2 + \rho \tr(\hP_{\hbY_t})^3 \right).
	%\end{multline*}
	\mods{Consequently,}
	\begin{multline*}
		\mods{\tr(\hP_{\hbY_t}) \cdot a(t, \tr(\hP_{\hbY_t})) + \frac{1}{2} \frac{\md }{\md t} [\tilde{\Ncal_t}, \tilde{\Ncal_t}]_t}  \\
		\mods{\leq} (\lambda_{\max}(A + A^{\top}) + 1) \tr(\hP_{\hbY_t})^2 + \mods{\tr(\Sigma_{\bU_t})^2} + \rho \left(\frac{\mods{2}}{{\rb{N}}-1} - \frac{1}{R} \right) \tr(\hP_{\hbY_t})^3 
		\\
		\leq 
		K(1 + \tr(\hP_{\hbY_t})^2),
	\end{multline*}
	\mods{having used $\lambda_{\max}(\Sigma_{\bU_t}) \leq \tr(\Sigma_{\bU_t})$ and an $\varepsilon$-Young inequality in the first line, and the assumption \modr{$4R + 1 < {\rb{N}}$} in both lines}.
	Next, derive
	\begin{multline*}
		\tr(\hP_{\hbY_t})^2 
		=  \tr(\hP_{\hbY_0})^2 + 2\int_0^t \left( \tr(\hP_{\hbY_s}) \cdot a+  \frac{1}{2} \md [\mods{\tilde{\Ncal}_s},\tilde{\Ncal}_s]_s \right) \md s + 2 \int_0^t \tr(\hP_{\hbY_s}) \md \tilde{\Ncal}_s
		\\
		\leq \tr(\hP_{\hbY_0})^2 + 2 K \int_0^t \left(1 +  \tr(\hP_{\hbY_s})^2 \right) \md s + 2 \int_0^t \tr(\hP_{\hbY_s}) \md \tilde{\Ncal}_s.
	\end{multline*}
	Taking the supremum over $[0,t]$ and then the expectation, 
	\begin{multline*}
		\E [\sup_{0 \leq s \leq t} \tr(\hP_{\hbY_t})^2] 
		\\
		\leq 
		\left( \E[ \tr(\hP_{\hbY_0})^2 ] + \mods{2Kt} \right)
		+ 2 K \int_0^t \E[\sup_{0 \leq s \leq t} \tr(\hP_{\hbY_s})^2 ] \md s
		+ 2 \E\left[\sup_{0 \leq s \leq t} \left| \int_{0}^s  \tr(\hP_{\hbY_s}) \md \tilde{\Ncal}_s \right|\right]
		\\
		\leq 
		C(t, \E[ \tr(\hP_{\hbY_0})^2 ]) 
		+ 2 \mods{K} \int_0^t \E[\sup_{0 \leq s \leq t} \tr(\hP_{\hbY_s})^2 ] \md s
		\\
		+ 8 \E \left[ \left( \int_0^t \tr(\hP_{\hbY_s})^2 \mods{\md} [\tilde{\Ncal}_s, \tilde{\Ncal}_s]_s \md s \right)^{\oh} \right].
	\end{multline*}
	To bound the last term, notice that 
	\begin{align*}
		8 \E \left[ \left( \int_0^t \tr(\hP_{\hbY_s})^2 \mods{\md }[\tilde{\Ncal}_s, \tilde{\Ncal}_s]_s \md s \right)^{\oh} \right] 
		&\leq 
		8 \E \left[ \left(\sup_{0 \leq s \leq t} \tr(\hP_{\hbY_s})\right) \cdot \left( \int_0^t \mods{\frac{\md}{\md s}} [\tilde{\Ncal}_s, \tilde{\Ncal}_s]_s \md s \right)^{\oh} \right] 
		\\
		%&\leq 
		%8 \E \left[ \left(\sup_{0 \leq s \leq t} \tr(\hP_{\hbY_s})\right) \cdot \left( \int_0^t [\tilde{\Ncal}_s, \tilde{\Ncal}_s]_s \md s \right)^{\oh} \right] 
		%\\
		&\leq
	\frac{1}{2}\E \left[ \sup_{0 \leq s \leq t} \tr(\hP_{\hbY_s})^2 \right] 
	\\
		&\quad+ \frac{\mods{32}}{{\rb{N}}-1}  \int_0^t  \left( \modt{\lambda_{\max}(\Sigma_{\bU_s})} \E[ \tr(\hP_{\hbY_s})] + \rho \E [\tr(\hP_{\hbY_s})^{\mods{3}}] \right) \md s.
	\end{align*}
	\mods{C}ombining the two previous terms yields
	\begin{multline*}		
		\E [\sup_{0 \leq s \leq t} \tr(\hP_{\hbY_s})^2] 
	\leq C\left(t, \lambda_{\max}(\Sigma), \rho, \E[ \tr(\hP_{\hbY_0})^2 ], \int_0^t \E[ \tr(\hP_{\hbY_s})^3 ] \md s \right)
	\\
		+ 2 K \int_0^t \E[\sup_{0 \leq s \leq t} \tr(\hP_{\hbY_s})^2 ] \md s,
	\end{multline*}
	and since the terms involving $\E[\tr(\hP_{\hbY_s})^3]$ are bounded by~\cref{lem:beta-gamma bounds}, the claim follows by applying a Gronwall lemma~\cite{NumericalApproQuarte1994}.
\end{proof}
}

\begin{proof}[Proof of \cref{prop:vec-Yt-bounds}]
	\rb{It holds
	\begin{equation*}
		\E[\sup_{0 \leq s \leq t} \norm{\mvec(\hbY_s)}^2] 
		= 
		({\rb{N}}-1) \E[\sup_{0 \leq s \leq t} \tr(\hP_{\hbY_s})] 
		\leq ({\rb{N}}-1) \E[\sup_{0 \leq s \leq t} \tr(\hP_{\hbY_s})^2]^{\oh},
	\end{equation*}
	which is bounded by~\cref{lem:trMYt-sup-bounds}.
	Furthermore, 
	\begin{equation*}
		\bbP \left( \sup_{0 \leq s \leq t} \norm{\mvec(\hbY_s)} > n \right) 
		\leq 
		\frac{\E[\sup_{0 \leq s \leq t} \norm{\mvec(\hbY_s)}^2]}{n^2},
	\end{equation*}
implying (by the Borell-Cantelli lemma) the a.s. boundedness of $\norm{\mvec(\hbY_t)}$.}
\end{proof}

\begin{proof}[Proof of~\cref{lem:dEt-bound}]
	\modt{Define} $E_t = \hP_{\hbY_t} - M_{\bY_t}$. 
	It holds
	\begin{align*}
		\md E_t  
			&= \left( A_{\bU_t} E_t + E_t A_{\bU_t}^{\top} - \hP_{\hbY_t} S_{\bU_t} \hP_{\hbY_t} + M_{\bY_t} S_{\bU_t} M_{\bY_t} \right) \md t + \frac{1}{\sqrt{{\rb{N}}-1}} \md \Ncal_t
			\\
			&= \left( (A_{\bU_t} - M_{\bY_t} S_{\bU_t}) E_t + E_t (A_{\bU_t} - \hP_{\hbY_t} S_{\bU_t} )^{\top} \right)\md t + \frac{1}{\sqrt{{\rb{N}}-1}} \md \Ncal_t.
	\end{align*}
	Consequently, 
	\begin{align} \label{eqn:dEtEt}
		\md (E_t^{\top} E_t) &= \md E_t^{\top} E_t + E_t^{\top} \md E_t + \md [E^{\top}_t, E_t]_t \nonumber \\
				     &= E_t^{\top} (A_{\bU_t} + A_{\bU_t}^{\top} - \mods{ M_{\bY_t} S_{\bU_t} - S_{\bU_t} M_{\bY_t} }) E_t \md t
				     + \md [E^{\top}_t, E_t]_t \nonumber
		\\
				     &\quad + (A_{\bU_t} - \hP_{\hbY_t} S_{\bU_t} )E_t^{\top} E_t   \md t
				     + E_t^{\top}E_t (A_{\bU_t} - \hP_{\hbY_t} S_{\bU_t})^{\top} \md t \nonumber
				     \\
				     &\quad
				     \modt{+ \frac{1}{\sqrt{{\rb{N}}-1}} \left( E_t^{\top} \md \Ncal_t + \md \Ncal_t^{\top} E_t \right)}
	\end{align}
	where in \modt{a} slight abuse of notation
	\begin{align*}
		\md [E^{\top}_t, E_t]_t(i,j) = \frac{1}{{\rb{N}}-1}\sum_{k=1}^R C_{\Ncal} ((\modt{k,i}),(k,j))\modt{,}
	\end{align*}
	\modt{where~$C_{\Ncal}$ is defined via~\cref{eqn:qv-ncal}.}
	Therefore, taking the trace of~\cref{eqn:dEtEt},
	\begin{align*}
	\md \norm{E_t}^2_{\modt{F}} &= \tr\left((A_{\bU_t} + A_{\bU_t}^{\top} - \mods{M_{\bY_t} S_{\bU_t} - S_{\bU_t} M_{\bY_t}}) E_t^{\top} E_t\right) \mods{\md t} 
\\
				    &\quad + \tr((A_{\bU_t} + A_{\bU_t}^{\top} -  \mods{\hP_{\hbY_t} S_{\bU_t} -  S_{\bU_t} \hP_{\hbY_t}}) E_t^{\top} E_t) \md t
					  \\& \quad+ \sum_{i=1}^R \sum_{k=1}^R C_{\Ncal} ((i,k),(k,i)) \md t 
					  + \frac{2}{\sqrt{{\rb{N}}-1}} \md \mathcal{E}_t 
					  \\
					    &=  \tr\left( \left( \mods{2(A_{\bU_t} + A_{\bU_t}^{\top})} - (M_{\bY_t} + \hP_{\hbY_t}) S_{\bU_t} - \mods{S_{\bU_t} (M_{\bY_t} + \hP_{\hbY_t})}\right) E_t^{\top} E_t \right) \md t 
					    + \frac{2}{\sqrt{{\rb{N}}-1}} \md \mathcal{E}_t
					    \\
					    & \quad + \frac{2}{{\rb{N}}-1} \left ( \tr\left( (\Sigma_{\bU_t} + \hP_{\hbY_t} S_{\bU_t} \hP_{\hbY_t}) \hP_{\hbY_t}\right) + \tr(\Sigma_{\bU_t} + \hP_{\hbY_t} S_{\bU_t} \hP_{\hbY_t} ) \tr(\hP_{\hbY_t})\right) \md t,
	\end{align*}
	\modt{having used~\cref{eqn:qv-nt-tr} to expand the double sum on $C_{\Ncal}$.}
	Finally, \mods{denoting $\tilde{A} = A_{\bU_t} + A_{\bU_t}^{\top}$ and $\tilde{B} = (M_{\bY_t} + \hP_{\hbY_t}) S_{\bU_t} - S_{\bU_t} (M_{\bY_t} + \hP_{\hbY_t})$, 
	the matrix $2(A_{\bU_t} + A_{\bU_t}^{\top}) - (M_{\bY_t} + \hP_{\hbY_t}) S_{\bU_t} - S_{\bU_t} (M_{\bY_t} + \hP_{\hbY_t}) = 2 \tilde{A} - \tilde{B}$ is symmetric}
	hence	
	\begin{equation*}
		\lambda_{\max}(\mods{2 \tilde{A} - \tilde{B}}) \leq \mods{2}\lambda_{\max}(A_{\bU_t} + A_{\bU_t}^{\top}) \leq \mods{2}\lambda_{\max}(A + A^{\top}),
	\end{equation*}
	the former inequality \mods{being} given by Weyl's inequality.
	Using this to upper-bound	
	\begin{align*}
		\tr\left((\mods{2 \tilde{A} - \tilde{B}}) E_t^{\top} E_t \right)  \leq \lambda_{\max} (\mods{2 \tilde{A} - \tilde{B}}) \norm{E_t}^2_F
	\end{align*}
	yields the first result. The quadratic variation of $\Ecal_t$ is computed by using the expression~\eqref{eqn:qv-ncal} for the quadratic variation of the martingale $\Ncal_t$ defined in~\cref{eqn:Ncal-ij-detail},
	%\begin{multline*}
	%	\frac{1}{P-1} \sum_{i,j}^R \sum_{k,\ell}^R \sum_{p,p'}^P \sum_{m,m'}^R E_{ij} E_{k\ell} [(\alpha_{ij}^{(p)} + \alpha_{ji}^{(p)})(m)]  [(\alpha_{k\ell}^{(p')} + \alpha_{\ell k}^{(p')})(m')] \md [\hWcal_t^{\star,(p)}(m), \hWcal_t^{\star,(p')}(m')]_t	
	%	\\
	%	= \frac{1}{P-1} \sum_{i,j}^R \sum_{k,\ell}^R \sum_{p,p'}^P  E_{ij} E_{k\ell} (\alpha_{ij}^{(p)} + \alpha_{ji}^{(p)}) (\alpha_{k\ell}^{(p')} + \alpha_{\ell k}^{(p')})^{\top} \left( \left(1 - \frac{1}{P} \right) \delta_{p,p'} + \frac{\delta_{p\neq p'}}{P}  \right)
	%	\\
	%	= \frac{1}{P-1} \sum_{i,j}^R \sum_{k,\ell}^R E_{ij} E_{k\ell} \sum_{\modt{p=1}}^P (\alpha_{ij}^{(p)} + \alpha_{ji}^{(p)}) (\alpha_{k\ell}^{(p)} + \alpha_{\ell k}^{(p)})^{\top} 
	%	\\
	%	= \sum_{i,j}^R \sum_{k,\ell}^R E_{ij} E_{k\ell} \left( (\Sigma_{\bU_t})_{ik} (\hP_{\hbY_t})_{j \ell} + 
	%		(\Sigma_{\bU_t})_{i \ell} (\hP_{\hbY_t})_{j k}  + 
	%		(\Sigma_{\bU_t})_{jk} (\hP_{\hbY_t})_{i \ell} +
	%		(\Sigma_{\bU_t})_{j \ell} (\hP_{\hbY_t})_{i k} 
	%	\right)
	%	 \\ 
	%	 = 4\tr(E_t \hP_{\hbY_t} E_t \Sigma_{\bU_t}), 
	%\end{multline*}
	\begin{align*}
		\mods{
		\frac{\md}{\md t} [\Ecal_t, \Ecal_t]_t }
		&= \mods{\sum_{i,j}^R \sum_{k,\ell}^R E_{ij} E_{kl}  \frac{\md}{\md t} [(\Ncal_t)_{ij}, (\Ncal_t)_{k \ell}]_t}
			\\
		&=\mods{ \frac{1}{{\rb{N}}-1} \sum_{i,j}^R \sum_{k,\ell}^R E_{ij} E_{kl} \left( \sum_{p=1}^{\rb{N}} (\alpha_{ij}^{(p)} + \alpha_{ji}^{(p)}) (\alpha_{k\ell}^{(p)} + \alpha_{\ell k}^{(p)})^{\top} 
					  + (\beta_{ij}^{(p)} + \beta_{ji}^{(p)}) (\beta_{k\ell}^{(p)} + \beta_{\ell k}^{(p)})^{\top} \!\! \right) }
		\\
		&= \sum_{i,j}^R \sum_{k,\ell}^R E_{ij} E_{k\ell} \left( (\Sigma_{\bU_t})_{ik} (\hP_{\hbY_t})_{j \ell} + 
			(\Sigma_{\bU_t})_{i \ell} (\hP_{\hbY_t})_{j k}  + 
			(\Sigma_{\bU_t})_{jk} (\hP_{\hbY_t})_{i \ell} +
			(\Sigma_{\bU_t})_{j \ell} (\hP_{\hbY_t})_{i k} 
		\right)
		\\
		&\hspace{1em}\mods{+
\sum_{i,j}^R \sum_{k,\ell}^R E_{ij} E_{k\ell} 
		\bigg(  (\hP_{\hbY_t} S_{\bU_t} \hP_{\hbY_t})_{ik} (\hP_{\hbY_t})_{j \ell} + 
		(\hP_{\hbY_t} S_{\bU_t} \hP_{\hbY_t})_{i \ell} (\hP_{\hbY_t})_{j k} }
			\\
		&\hspace{10em}
			\mods{+ (\hP_{\hbY_t} S_{\bU_t} \hP_{\hbY_t})_{jk} (\hP_{\hbY_t})_{i \ell} +
			(\hP_{\hbY_t} S_{\bU_t} \hP_{\hbY_t})_{j \ell} (\hP_{\hbY_t})_{i k} 
		\bigg)}	
		 \\ 
		&= 4\tr(E_t \hP_{\hbY_t} E_t \Sigma_{\bU_t}) + \mods{4 \tr(E_t \hP_{\hbY_t}  E_t \hP_{\hbY_t} S_{\bU_t} \hP_{\hbY_t} )} \\
					 &= 4\tr\left(E_t \hP_{\hbY_t} E_t ( \Sigma_{\bU_t} + \hP_{\hbY_t} S_{\bU_t} \hP_{\hbY_t} ) \right),
	\end{align*}
	\mods{having used the relations~\eqref{eqn:alpha-beta} in the third equality.}
	%and, performing the same computation for the second term (with the $\beta$-coefficients) yields 
	%\begin{align*}
	%	\mods{\frac{\md}{\md t}} [\Ecal_t, \Ecal_t]_t &= 4\tr(E_t \hP_{\hbY_t} E_t \Sigma_{\bU_t})  + 4 \tr(E_t \hP_{\hbY_t}  E_t \hP_{\hbY_t} S_{\bU_t} \hP_{\hbY_t} ) \\
	%				 &= 4\tr\left(E_t \hP_{\hbY_t} E_t ( \Sigma_{\bU_t} + \hP_{\hbY_t} S_{\bU_t} \hP_{\hbY_t} ) \right).  
	%\end{align*}
\end{proof}

\begin{proof}[Proof of~\cref{lem:bounded-moments-xdlr}]
	\begin{multline*}
		\md \norm{\Xcal_t}^2 = \bigg\{ \Xcal_t^{\top} (A + A^{\top}) \Xcal_t  
			\modt{+ 2 \Xcal_t^{\top} f }
			+  \modt{(\Xcal_t^{\top} P_t S \Xcal_t^{\signal} + (\Xcal_t^{\signal})^{\top} P_t S \Xcal_t ) } 
			\\
			\modt{- \Xcal_t^{\top} (P_t S + S P_t) \Xcal_t }
		+ \tr(\Pi_{\bU_t} \Sigma \Pi_{\bU_t})  + \modt{2} \tr(P_t S P_t)  \bigg\} \modt{\md t}
		\\
		+ \big\{ \modt{2} \Xcal_t^{\top} \Pi_{\bU_t} \Soh \md \Wcal_t + \modt{2} \Xcal_t^{\top} P_t H^{\top} \Gmoh \md ( \widetilde{\Vcal}_t - \md \Vcal_t) \big\}
		\modt{= \Lcal_t \md t + \md \Mcal_t}
	\end{multline*}
	hence  
	\begin{multline*}
		\Lcal_t 
		\leq \modt{2 \norm{f} \norm{\Xcal_t} + }  \frac{1}{2}\lambda_{\max}(A + A^{\top}) \norm{\Xcal_t}^2 + \left( \left( \frac{2 \norm{S \Xcal_t^{\signal}}^2}{|\lambda_{\max}(A + A^{\top})|} + \modt{2} \rho \right) \norm{P_t}^2_F + \tr(\Sigma) \right)
		\\
		\leq \modt{2 \norm{f} \norm{\Xcal_t} + } \frac{1}{2}\lambda_{\max}(A + A^{\top}) \norm{\Xcal_t}^2 + \left(  \frac{\modt{4} \norm{S \Xcal_t^{\signal}}^4}{|\lambda_{\max}(A + A^{\top})|^2} + \modt{4} \rho^2  + \modt{\frac{\norm{P_t}^4_F}{2}} + \tr(\Sigma) \right)
	\end{multline*}
	having used ($\varepsilon$-)Young inequalities and \rb{$\md [\Mcal_t, \Mcal_t] \leq \left( \modt{4} \tr(\Sigma) + \modt{8} \rho \norm{P_t}^2_F  \right) \norm{\Xcal_t}^2$.}
	%\begin{equation*}
	%	\md [\Mcal_t, \Mcal_t] \leq \left( \modt{4} \tr(\Sigma) + \modt{8} \rho \norm{P_t}^2_F  \right) \norm{\Xcal_t}^2.
	%\end{equation*}
	\rb{\cref{lem:gram-ex-bounds} guarantees $\sup_{t \geq 0}\norm{P_t}^n_F < \infty$ and since $\sup_{t \geq 0} \E [ \norm{S \Xcal_t}^n ] < \infty$, \cref{lem:fost-lyap} yields the result.}
\end{proof}

\section*{Acknowledgments}
The authors acknowledge the use of AI-assisted tools for minor editing and language polishing.

\bibliographystyle{siamplain}
\bibliography{references} 

\end{document}